\newcommand{\red}[1]{\textcolor{black}{#1}}
\newcommand{\redd}[1]{\textcolor{black}{#1}}
\newcommand{\ubar}[1]{\underaccent{\bar}{#1}}
\theoremstyle{thmstyleone}%
\newtheorem{theorem}{Theorem}
\theoremstyle{thmstyletwo}%
\newtheorem{remark}{Remark}%
\theoremstyle{thmstylethree}%
\newtheorem{definition}{Definition}%
\newtheorem{assumption}{Assumption}
\newtheorem{lemma}{Lemma}
\DeclareMathOperator{\crit }{crit}
\DeclareMathOperator{\dist}{dist}
\begin{document}

\title[Article Title]{Convergence rates for an inexact linearized ADMM for nonsmooth \redd{nonconvex} optimization with nonlinear equality constraints}


\author[1]{\fnm{Lahcen} \sur{El Bourkhissi}}\email{lahcenelbourkhissi1997@gmail.com}
\equalcont{These authors contributed equally to this work.}

\author*[1,2]{\fnm{Ion} \sur{Necoara}}\email{ ion.necoara@upb.ro}
\equalcont{The authors contributed equally to this work.}


\affil[1]{\orgdiv{Automatic Control and Systems Engineering Department}, \orgname{National University of Science and Technology Politehnica Bucharest}, \orgaddress{\street{} \city{} \postcode{060042}, \state{Bucharest}, \country{Romania}}}

\affil[2]{\orgdiv{Gheorghe Mihoc-Caius Iacob Institute of Mathematical Statistics and Applied Mathematics of the  Romanian Academy}, \orgname{} \orgaddress{\street{} \city{} \postcode{050711}, \state{Bucharest}, \country{Romania}}}



\abstract{In this paper, we consider  nonconvex  optimization problems with nonsmooth \redd{nonconvex} objective function and  nonlinear equality constraints. We assume that both the objective function and the functional constraints can be separated into 2 blocks. To solve this problem, we introduce a new inexact linearized alternating direction method of multipliers (ADMM) algorithm. Specifically, at each iteration, we linearize the smooth part of the objective function and the nonlinear part of the functional constraints within the augmented Lagrangian and add a dynamic quadratic regularization. We then compute the new iterate of the block associated with nonlinear constraints inexactly. This strategy yields subproblems that are easily solvable and their (inexact) solutions become the next iterates. Using Lyapunov arguments, we establish convergence guarantees for the iterates of our method toward an $\epsilon$-first-order  solution within  $\mathcal{O}(\epsilon^{-2})$  iterations. Moreover, we demonstrate that in cases where the problem data exhibit e.g., semi-algebraic properties or more general the KL condition, the entire sequence generated by our algorithm converges, and we provide convergence rates. To validate both the theory and the performance of our  algorithm, we conduct numerical simulations for  several nonlinear model predictive control and \redd{matrix factorization} problems.}

\keywords{Nonconvex optimization, linearized augmented Lagrangian method,  alternating direction method of multipliers, convergence analysis.}



\maketitle

\section{Introduction}

\noindent Many applications such as nonlinear model predictive control,  state and parameter estimation of dynamical systems,  training shallow neural networks, classification and signal processing can be formulated as the following structured  nonsmooth nonconvex optimization problem with nonlinear equality constraints  of the form (see for example  \cite{Fes:20,HonHaj:17,GamKiz:24,MesBau:21,CohHal:21}): 
\begin{equation}
\begin{aligned}\label{eq1}
& \underset{x\in\mathbb{R}^n, y \in {\mathcal{Y}} \subseteq\mathbb{R}^p}{\min}
& & f(x)+g(x)+h(y)\\
& \hspace{0.7cm}\textrm{s.t.:}
& & \hspace{0.07cm} F(x)+Gy=0,
\end{aligned}
\end{equation}
where {$\mathcal{Y}$ is a nonempty, closed  subset of $\mathbb{R}^p$ which admits an easy projection, the matrix $G \in \mathbb{R}^{m \times p}$, the functions   $f:\mathbb{R}^n\to {\mathbb{R}}, g:\mathbb{R}^n\to \bar{\mathbb{R}}, h:\mathbb{R}^p\to {\mathbb{R}},$  and  $F\triangleq{(f_1,...,f_m)}^T$,  
 with $f_i:\mathbb{R}^n\to {\mathbb{R}}$ for all $i \in\{1,...,m\}$, are nonlinear functions. We consider that $f, h, f_i$,  for all  $i=1,...,m$, are continuously differentiable functions, $f, h$ are possibly nonconvex and $g$ is a \redd{proper lower semi-continuous and prox-bounded function relative to its domain $\text{dom}g$ (possibly nonconvex)}.   Moreover, we assume that the problem is well-posed, i.e., the feasible set is nonempty and the optimal value is finite. \redd{Note that problem \eqref{eq1} is more general than the one considered e.g., in \cite{CohHal:21}; specifically, \cite{CohHal:21} can have constraints only on  the block variables $x$,  while we can impose constraints on  both block variables $x$ and $y$, respectively.} Moreover, inequality constraints on $x$ can be included in the function $g$ using the indicator function.  For example, any constrained  composite optimization problem frequently appearing in optimal control~\cite{MesBau:21}:
\begin{align*}
\min_{x \in \mathcal{X}} f(x)+h(F(x)) \quad \text{s.t. }\; F(x)\in\mathcal{Y},
\end{align*} can be easily recast in the form of optimization problem \eqref{eq1} by defining $F(x)=y$,  then $G = -I_m$ and $g$ the indicator function of the set $\mathcal{X}$. 

\medskip

\noindent \textit{Related work.}  In this paper, we propose an augmented Lagrangian approach to address problem \eqref{eq1}. The augmented Lagrangian method, or method of multipliers, was introduced in \cite{HaaBuy:70,Hes:69} to minimize an objective function under equality constraints. It provides many theoretical advantages, even for non-convex problems (e.g., no duality gap and exact penalty representation), see \cite{RocWet:98}.  In constrained optimization, the augmented Lagrangian approach has attracted considerable attention and has been widely studied for convex problems, for example in \cite{Ber:96,BoyPar:11,NedNec:14,PatNec:17, HeYua:13} and related references. More recently, researchers have extended this approach to non-convex problems, encompassing both smooth and non-smooth objectives with linear constraints, see for example \cite{ThemPat:20, Yas:22,QinXin:19,JiaLin:19,ZhaLuo:20,HonHaj:17,HajHon:19}. However, there are  very few studies on  the use of the augmented Lagrangian framework for nonconvex optimization, where nonconvexity comes from  constraints, e.g.,   \cite{XieWri:21,CohHal:21,KreMar:00, BirMar:14}.  In particular, in \cite{XieWri:21}, a proximal augmented Lagrangian (Proximal AL) algorithm is proposed to solve the problem \eqref{eq1}; in this method, a static proximal term is added to the original augmented Lagrangian function. It is proved that when an approximate first- (second-) order solution of the subproblem is found, then an $\epsilon$ first-  (second-) order solution of the original  problem \eqref{eq1} is obtained within $\mathcal{O}(\epsilon^{\eta-2})$ outer iterations, for some parameter  $\eta\in[0,2]$. Note that when $\eta$ is  close to $2$, the efficiency is reduced to $\mathcal{O}(1)$ outer iterations, but the subproblem, which is already non-convex, becomes very ill-conditioned as the penalty parameter of the augmented Lagrangian is inversely proportional to $\epsilon^{\eta}$. 

\medskip 

\noindent On the other hand, when the optimization problem at hand possesses a specific structure, such as the separability discussed in this paper, it becomes feasible to leverage this inherent structure within the augmented Lagrangian framework. This approach leads to the well-known Alternating Direction Method of Multipliers (ADMM) (refer to \cite{BoyPar:11, GabMer:76, GloTal:89, CohHal:21, Yas:22,  BolSab:18, BotNgu:20, HagZha:20}). In particular,  in  \cite{BoyPar:11}, an ADMM method was introduced for solving a separable convex problem with linear constraints, and its asymptotic convergence was demonstrated. In \cite{HagZha:20}, the authors established the convergence rate of an inexact ADMM, designed for solving a nonsmooth convex problem with linear constraints. It was proven that when the objective function is convex their method \redd{has} complexity   $\mathcal{O}(\epsilon^{-1})$  and  $\mathcal{O}(\epsilon^{-0.5})$ in the strongly convex setting. Furthermore, \cite{Yas:22} proposed a Proximal Linearized ADMM (PL-ADMM) to address nonsmooth nonconvex problems with linear equality constraints for two blocks. In this scenario, one block of the problem is smooth, and the other is nonsmooth. The PL-ADMM algorithm linearizes the smooth parts during each block update, incorporates a dynamic metric proximal term in the primal update, and includes an overrelaxation step in the dual update. Paper \cite{Yas:22} demonstrated that each limit point of the sequence generated by this algorithm is a stationary point of the original problem. Additionally, it was shown that under the Kurdyka-Lojasiewicz  (KL) property, the method converges either in a finite number of iterations, sublinearly, or linearly, depending on the exponent associated with the KL condition. 

\medskip 

\noindent However, for nonconvex problems with nonlinear equality constraints, limited research has been conducted, specifically in \cite{CohHal:21, BolSab:18, DemJia:23}. In \cite{BolSab:18}, the authors addressed a special case of problem \eqref{eq1} ($g=0$, $\mathcal{Y} = \mathbb{R}^n$ and $G=-I$, where $I$ denotes the identity matrix of appropriate dimension), proving that when the primal iterates are approximate stationary points of the augmented Lagrangian function, each limit point  is a first-order solution of the problem. Furthermore, it was demonstrated that, under KL, the entire sequence generated by their method converges to a first-order solution of the problem.   Further, in \cite{CohHal:21}, the authors proposed an augmented Lagrangian-based method to tackle the  problem considered in this paper (with $\mathcal{Y} = \mathbb{R}^n$) and under similar assumptions as ours. In this algorithm, the authors linearized the smooth part of the augmented Lagrangian function and added a dynamic quadratic regularization term,  proving the convergence of the iterates of their method to a KKT point of problem \eqref{eq1} under the KL property. \redd{Furthermore, \cite{HalTeb:23} considered nonconvex composite functional optimization problems. To handle this structure, the authors introduced a slack variable, leading to a problem of the form  \eqref{eq1}, where the function associated with the slack variable is  lower semicontinuous and $G$ is given by $-I_m$. In contrast, in our case $h$ is smooth and $\mathcal{Y}$ satisfies Assumption \ref{assum_lips}, while $G$ is a general full row rank matrix (see Section 2).  The authors in \cite{HalTeb:23} then applied an ADMM scheme to solve the reformulated problem, where each iteration involves linearizing the smooth component of the augmented Lagrangian function, leading to proximal updates at each step of the algorithm. Under the assumption that the dual multipliers remain bounded (as also assumed in this paper), their method achieves an $\epsilon$-first-order  solution within $\mathcal{O}(\rho^2/\epsilon^{2})$ iterations. However, to ensure that the iterates remain $\epsilon$-feasible after a certain number of iterations, the penalty parameter  must satisfy $\rho \geq \mathcal{O}(\epsilon^{-1})$. Consequently, based on the proof of Theorem 2 in \cite{HalTeb:23}, the overall complexity of their method ultimately reaches $\mathcal{O}(\epsilon^{-4})$.}
\redd{Finally, in \cite{DemJia:23}, the authors considered a special case of problem \eqref{eq1} ($h=0$, $G=-I$, while $\mathcal{Y}$ is a general set). The authors proposed an augmented Lagrangian method with a variable penalty parameter proving that when the primal iterates are approximate stationary points of the augmented Lagrangian function and the dual updates are projected onto some compact set, each limit point is a stationary point of the feasible problem and no connection to KKT points could be established. }


\medskip

\noindent \textit{Contributions:} Our approach, the inexact linearized ADMM, addresses several key limitations present in previous works. Notably, in \cite{XieWri:21, BolSab:18, DemJia:23}, high computational costs are required for solving  the nonconvex subproblems. In \cite{BoyPar:11, HagZha:20, Yas:22}, only linear constraints were considered. Furthermore, in \cite{CohHal:21}, no information about the Hessian of the augmented term is used, resulting in a low-quality approximation, and no constraints on the second block variables  are allowed. \redd{Finally, although \cite{DemJia:23} considers a general set $\mathcal{Y}$, this leads to weak convergence results.}  Specifically, our main  contributions are:

\vspace{0.1cm}

\noindent \textbf{(i)} At each iteration, we linearize the smooth part of the cost function and the nonlinear part of the functional constraints in the augmented Lagrangian function. Additionally, we introduce a dynamic regularization term. Furthermore, we solve the block associated with nonlinear constraints inexactly. This gives rise to a new algorithm, named the inexact Linearized ADMM (iL-ADMM) method, which, at each iteration, requires solving simple subproblems that are easy to address.

\vspace{0.1cm}

\noindent \textbf{(ii)}  {Nonlinearity in the constraints related to one block of variables in problem \eqref{eq1} introduces nontrivial challenges compared to the linear constraints case. More specifically, the usual ADMM algorithm developed originally for linear constraints  must be properly modified and consequently a new convergence analysis is required.} We provide rigorous proofs, based on Lyapunov function arguments, of global asymptotic convergence, proving that the iterates converge to a critical point of the augmented Lagrangian function. Additionally, our method guarantees convergence to an $\epsilon$-first-order  solution of the original problem in at most $\mathcal{O}(\epsilon^{-2})$~iterations. 

\vspace{0.1cm}

\noindent \textbf{(iii)} Under the (KL) property, which holds e.g., for semi-algebraic functions, we demonstrate the convergence of the entire sequence generated by our algorithm and derive improved convergence rates that depend on the KL parameter. 

\medskip 

\noindent In comparison with \cite{XieWri:21}, our approach effectively utilize the unique structure of the problem, particularly its separability. When comparing the complexity of the subproblems, the algorithms in \cite{BolSab:18, XieWri:21} is difficult to implement in practice due to their highly nonconvexity caused by the presence of nonlinear constraints in the subproblem from each iteration. Moreover, unlike \cite{BoyPar:11, HagZha:20, Yas:22}, our iL-ADMM method can handle nonlinear equality constraints. Furthermore, unlike \cite{CohHal:21}, our method uses a Gauss-Newton approach to retain some information about the Hessian of the augmented term making use of  only first-order derivatives. Finally, we compare the efficiency of our method with IPOPT \cite{WacBie:06} and the augmented Lagrangian  method in \cite{CohHal:21} to solve  nonlinear model predictive control and \redd{matrix factorization} problems using real systems and datasets, respectively. This paper represents an extension of our earlier work \cite{ElbNec:23}. The extension involves examining the impact of inexactness in solving the subproblem associated with the first block of the primal variables, diverging from the exact solution approach in \cite{ElbNec:23}. Furthermore, we delve into the convergence rate analysis under general or KL conditions, in contrast to \cite{ElbNec:23}, which focused solely on proving asymptotic convergence. Additionally, our study incorporates more  numerical~tests.

\medskip

\noindent The paper is structured as follows. In the next section, we introduce some mathematical preliminaries, in section \ref{sec3} we present the  iL-ADMM method followed in section \ref{sec4} by its convergence analysis. Finally, section \ref{sec5} presents detailed numerical results.

\section{Preliminaries}\label{sec2}
\noindent We use $\|\cdot\|$ to denote the $2-$norm of a vector or of a matrix, respectively.  For a differentiable function $\phi:\mathbb{R}^l\to\mathbb{R}$, we denote by $\nabla \phi(x)\in\mathbb{R}^l$ its gradient at a point $x$. For a differentiable function $F:\mathbb{R}^n  \to\mathbb{R}^m$, we denote its Jacobian at a given point $x$ by $\nabla F(x)\in\mathbb{R}^{m\times n}$. Furthermore, $\partial g(x)$ refers to the limiting subdifferential of a proper, lsc function $g$, and $\partial^{\infty} g(x)$ refers to the horizon subdifferential. For more details about the subdifferential of nonsmooth nonconvex functions, we refer  to  \cite{RocWet:98}. 
{Moreover, $N_{\mathcal{Y}}(y)$ denotes the normal cone at $y \in \mathcal{Y}$ associated to the set $\mathcal{Y}$ \redd{and \( \mathbb{Z}_{+} \) is used to denote the set of positive integers}.}
\noindent We further introduce the  following notations:
\begin{gather*}
     l_f(x;\bar{x}):=f(\bar{x})+\langle\nabla f(\bar{x}),x-\bar{x}\rangle\hspace{0.5cm}\forall x,\bar{x},\\
     l_h(y;\bar{y}):=h(\bar{y})+\langle\nabla h(\bar{y}),y-\bar{y}\rangle\hspace{0.5cm}\forall y,\bar{y},\\
     l_F(x;\bar{x}):=F(\bar{x})+\nabla F(\bar{x})(x-\bar{x}) \hspace{0.5cm}\forall x,\bar{x}.
\end{gather*}
\redd{
A point $(x^*,y^*) \in \mathbb{R}^n \times \mathbb{R}^p$ is said to be feasible for \eqref{eq1} if $(x^*,y^*)\in\text{dom }g \times \mathcal{Y}$ 
 and $F(x^*) + Gy^*=0$. 
Let us introduce the definition of a KKT point of problem \eqref{eq1} and  motivate why we are interested in designing algorithms that yield such points.
\medskip
\begin{definition}\label{firstorder}[KKT and $\epsilon$-KKT points]
 The vector $(x^*,y^*) \in \text{dom }g\times \mathcal{Y}$ is said to be  a KKT point of problem \eqref{eq1} (equivalently,  first-order solution of problem \eqref{eq1}) if  $\exists \lambda^*\in\mathbb{R}^m$ such that the following conditions are satisfied:
\begin{gather*}
    -\nabla f(x^*)-{\nabla F(x^*)}^T\lambda^*\in\partial g(x^*), \quad 0\in\nabla h(y^*)+G^T\lambda^*+N_{\mathcal{Y}}(y^*), \\
 F(x^*)+Gy^*=0. 
\end{gather*}
Moreover, let $\epsilon>0$. The vector $(x_{\epsilon}^*,y_{\epsilon}^*) \in \text{dom }g\times \mathcal{Y}$ is said to be  an $\epsilon$-KKT point of problem \eqref{eq1} (equivalently, $\epsilon$-first-order solution of problem \eqref{eq1}) if  $\exists \lambda_\epsilon^*\in\mathbb{R}^m$ such that the following conditions are satisfied:
\begin{gather*}
    \dist\left(-\nabla f(x_{\epsilon}^*)-{\nabla F(x_{\epsilon}^*)}^T\lambda_{\epsilon}^*, \partial g(x_{\epsilon}^*)\right) \leq \epsilon, \quad \dist\left(-\nabla h(y_{\epsilon}^*) - G^T\lambda_{\epsilon}^*,N_{\mathcal{Y}}(y_{\epsilon}^*)\right) \leq \epsilon, \\
 \|F(x_{\epsilon}^*)+Gy_{\epsilon}^*\|\leq\epsilon. 
\end{gather*}
\end{definition}
\noindent Next, we present  a constraint qualification  condition for \eqref{eq1} at a feasible point $(\bar{x},\bar{y}) $.
\medskip 
\begin{definition} \label{regularity_def}[Constraint Qualification (CQ)]
Let $(\bar{x},\bar{y})$ be a feasible point for problem \eqref{eq1}. The pair $(\bar{x},\bar{y})$  is regular if the following constraint qualification conditions hold:
\begin{enumerate}[(i)]
    \item $\partial^\infty g(\bar{x}) \cap \text{range} \, \left(\nabla F(\bar{x})^T\right)=\{0\}$.
    \item  $N_{\mathcal{Y}}(\bar{y}) \cap \text{range} \, \left(G^T\right)=\{0\}$.
\end{enumerate}
\end{definition}
\medskip 
\noindent Next lemma shows why KKT points are interesting to us (its proof can be found in \cite{KruMeh:22}(Proposition 6.9)). 
\begin{lemma} \label{lemma_KKT}[Local minima + CQ = KKT points]
     Let $(x^*, y^*)\in \text{dom }g\times \mathcal{Y}$ be a local minimizer of problem \eqref{eq1}, which satisfies CQ. Then, $(x^*, y^*)$ is a KKT point of  \eqref{eq1}.
\end{lemma}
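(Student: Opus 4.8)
The plan is to prove Lemma~\ref{lemma_KKT} as a direct consequence of the first-order optimality conditions for a local minimizer of a nonsmooth problem, combined with a calculus rule for the subdifferential of a sum that is enabled precisely by the constraint qualification in Definition~\ref{regularity_def}. First I would rewrite problem \eqref{eq1} in the unconstrained form $\min_{(x,y)} \Phi(x,y) := f(x)+g(x)+h(y) + \iota_{\mathcal{Y}}(y) + \iota_{\{0\}}(F(x)+Gy)$, where $\iota_{S}$ denotes the indicator function of a set $S$. Since $(x^*,y^*)$ is a local minimizer of \eqref{eq1}, it is a local minimizer of $\Phi$, hence by the generalized Fermat rule \cite{RocWet:98} we have $0 \in \partial \Phi(x^*,y^*)$.

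The core step is to split $\partial \Phi(x^*,y^*)$ across the summands. I would apply the sum rule for limiting subdifferentials (\cite{RocWet:98}, Theorem 10.6 / Exercise 10.10): this requires that no nonzero horizon subgradient of one summand be the negative of a horizon subgradient of the rest — a qualification that is exactly what CQ supplies. Concretely, the horizon subdifferential of the composite indicator $\iota_{\{0\}}(F(\cdot)+G\cdot)$ at $(x^*,y^*)$ is contained in $\{(\nabla F(x^*)^T\mu, G^T\mu) : \mu \in \mathbb{R}^m\}$ (using the chain rule for $C^1$ maps and that the normal cone to $\{0\}$ is all of $\mathbb{R}^m$), while the horizon subdifferential of $g$ at $x^*$ is $\partial^\infty g(x^*)$ and that of $\iota_{\mathcal{Y}}$ at $y^*$ is $N_{\mathcal{Y}}(y^*)$; the smooth terms $f,h$ contribute only $\{0\}$ to the horizon. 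Condition (i) of Definition~\ref{regularity_def}, $\partial^\infty g(\bar{x}) \cap \operatorname{range}(\nabla F(\bar{x})^T) = \{0\}$, and condition (ii), $N_{\mathcal{Y}}(\bar{y}) \cap \operatorname{range}(G^T) = \{0\}$, together rule out the forbidden cancellations in each block, so the sum rule applies and also forces the relevant $\mu$ to vanish where needed. This yields
\[
0 \in \big(\nabla f(x^*) + \partial g(x^*) + \nabla F(x^*)^T \lambda^*\big) \times \big(\nabla h(y^*) + N_{\mathcal{Y}}(y^*) + G^T \lambda^*\big)
\]
for some common multiplier $\lambda^* \in \mathbb{R}^m$ (the multiplier is shared because the coupling constraint $F(x)+Gy=0$ contributes a single normal vector $\lambda^*$ to both blocks via its Jacobian $[\nabla F(x^*)\ \ G]$). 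Feasibility $F(x^*)+Gy^*=0$ holds by hypothesis. Reading off the two components gives exactly the three conditions in Definition~\ref{firstorder}, after rearranging signs ($-\nabla f(x^*) - \nabla F(x^*)^T\lambda^* \in \partial g(x^*)$ and $0 \in \nabla h(y^*) + G^T\lambda^* + N_{\mathcal{Y}}(y^*)$).

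The main obstacle — really the only nontrivial point — is justifying the subdifferential chain rule for the composite indicator $\iota_{\{0\}}\circ(F(\cdot)+G\cdot)$ and the sum rule simultaneously, i.e., verifying that the qualification hypotheses of the nonsmooth calculus actually reduce to the two displayed conditions of Definition~\ref{regularity_def}. Since $F$ is $C^1$ and $G$ linear, the map $(x,y)\mapsto F(x)+Gy$ is $C^1$ with surjective-or-not Jacobian, but surjectivity is not assumed here; instead one uses the metric-regularity-free version where the chain rule holds under the qualification $N_{\{0\}}(0)\cap \ker[\nabla F(x^*)^T\ \ G^T]^{\perp}$-type condition, which is automatic for a $C^1$ outer argument and a single point set. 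The separation into the two conditions (i) and (ii) then comes from observing that the horizon term of $g$ lives only in the $x$-block and that of $\iota_{\mathcal{Y}}$ only in the $y$-block, so the cancellation analysis decouples. Given that the excerpt explicitly points to \cite{KruMeh:22}, Proposition~6.9, for the full argument, I would cite that source for the detailed verification and present the above as the structural outline; there is no delicate estimate or iteration to grind through, only careful bookkeeping of which calculus rule applies and why CQ is the precise hypothesis that unlocks it.
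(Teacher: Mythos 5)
The paper does not actually prove this lemma: it simply points to \cite{KruMeh:22} (Proposition 6.9). Your reconstruction --- rewrite \eqref{eq1} as unconstrained minimization of $f+g+h+\iota_{\mathcal{Y}}+\iota_{\{0\}}\circ(F(\cdot)+G\cdot)$, apply the generalized Fermat rule, and then split the subdifferential via the sum rule whose horizon-cone qualification is exactly what CQ supplies --- is the standard route such a proof must take, and the bookkeeping showing that conditions (i) and (ii) of Definition~\ref{regularity_def} kill the forbidden cancellations block by block is correct.

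There is, however, one genuine gap, precisely at the step you declare to be ``automatic.'' The inclusion $N_C(x^*,y^*)\subseteq\{(\nabla F(x^*)^T\mu,\,G^T\mu):\mu\in\mathbb{R}^m\}$ for $C=\{(x,y):F(x)+Gy=0\}$ is \emph{not} automatic for a $C^1$ map with a singleton outer set; by the change-of-coordinates rule (e.g.\ \cite{RocWet:98}, Thm.~6.14) it requires the qualification that $\nabla F(x^*)^T\mu=0$ and $G^T\mu=0$ force $\mu=0$, i.e.\ surjectivity of the joint Jacobian $[\nabla F(x^*)\;\;G]$. This is not implied by CQ as stated: take $n=m=p=1$, $f(x)=x$, $g=h=0$, $F(x)=x^2$, $G=0$, $\mathcal{Y}=\mathbb{R}$; then $(0,0)$ is the global minimizer, both CQ conditions hold trivially ($\partial^{\infty}g(0)=\{0\}$ and $N_{\mathbb{R}}(0)=\{0\}$), yet no multiplier $\lambda^*$ satisfies $-1-0\cdot\lambda^*=0$, and indeed $N_C(0,0)=\mathbb{R}\times\{0\}$ is strictly larger than $\{(\nabla F(0)^T\mu,G^T\mu)\}=\{(0,0)\}$. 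In the setting of this paper the missing hypothesis is rescued by Assumption~\ref{assump2}\textit{\eqref{ass_v}} ($\sigma_{\min}(G)\geq\sigma>0$, so $G^T\mu=0$ already gives $\mu=0$), but that assumption is introduced only after the lemma and is not part of CQ. You should either invoke the full row rank of $G$ (or of $[\nabla F(x^*)\;\;G]$) explicitly at the chain-rule step, or verify that the hypotheses of \cite{KruMeh:22}, Proposition~6.9, as you apply it, really are covered by Definition~\ref{regularity_def} alone --- they are not.
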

}

\medskip 

\noindent \redd{Lemma \ref{lemma_KKT} shows that the KKT conditions are necessary for local minimizers, provided that some constraint qualification conditions hold at those points. This is the primary motivation for designing algorithms that converge to KKT points.  The convergence analysis of this paper will show that the limit points of the iterates generated  by our algorithm proposed in Section \ref{sec3} are KKT points, which, according to the previous lemma, may be  local minima, provided that the constraint qualification from Definition \ref{regularity_def} holds at such points. However, for the convergence analysis conducted in the next sections, we only consider  the following assumptions:}

\medskip 

\begin{assumption}\label{assump2}
Given the compact sets $\mathcal{S}_x\subseteq \text{dom }g$ and $\mathcal{S}_y\subseteq \mathcal{Y}$, there exist positive constants $ \sigma, L_f, L_h, L_F$ such that the functions  $f, h$ and $F$ satisfy the following conditions  for all  $x, x'\in\mathcal{S}_x$  and  for all $y, y'\in\mathcal{S}_y$:
\begin{enumerate}[(i)]
  \item$ \|\nabla f(x)-\nabla f(x')\|\leq L_f\|x-x'\|,$  \label{ass_ii}
  \item$ \|\nabla h(y)-\nabla h(y')\|\leq L_h\|y-y'\|,$  \label{h_ass_ii}
  \item $    \|\nabla F(x)-\nabla F(x')\|_2\leq L_F\|x-x'\|,$ \label{ass_iv}
  \item $\sigma_{\text{min}}(G)\geq\sigma$. \label{ass_v}
\end{enumerate}
\end{assumption}

\noindent An immediate  consequence of the above assumption is that if Assumption \ref{assump2} is satisfied on the compact sets $\mathcal{S}_x, \mathcal{S}_y$, then there exist positive constants $ M_h, M_F$ such that the functions  $ h$ and $F$ satisfy the following conditions  for all  $(x, y)\in\mathcal{S}_x\times\mathcal{S}_y$ (a given compact set):
\begin{equation}\label{assump_conseq}
    \|\nabla h(y)\|\leq M_h,  \;  \|\nabla F(x)\|_2\leq M_F.
\end{equation}

\medskip 

\noindent Note that these assumptions are standard in  nonconvex optimization, see e.g., \cite{XieWri:21,CohHal:21}.
    In fact, it covers a large class of problems; more precisely,  \textit{\eqref{ass_ii}} and \textit{\eqref{h_ass_ii}} hold if $f$ and $h$ are  smooth on a neighborhood of $\mathcal{S}_x$ and  $\mathcal{S}_y$, respectively. Similarly,  \textit{\eqref{ass_iv}} is valid if $F$ is smooth  on a neighborhood of ${\mathcal{S}_x}$. Note that these assumptions are not very restrictive because they are satisfied locally for any $f, h, F\in\mathcal{C}^2$.  Moreover, assumption \textit{\eqref{ass_v}} is equivalent to the matrix \( G \) having full row rank. {In addition, the structure in the  problem \eqref{eq1} allows us to make more relaxed assumptions compared to problems without any linear part in the functional constraints. For example, instead of assuming that the Jacobian of the nonlinear functional constraints satisfies the Linear Independence Constraint Qualification (LICQ) on a given set, which is difficult to check, (see e.g.,  \cite{XieWri:21}),  our assumption \textit{\eqref{ass_v}} asks a simple  and easily verifiable condition only on the matrix $G$ corresponding to the  block of the decision variables $y$.

\medskip 

\noindent For this structured problem, one can develop pure augmented Lagrangian-based algorithms with simple subproblems (see our algorithm below).} We further introduce the following notations:
\[
\psi_{\rho}(x,y,\lambda)=f(x)+\langle \lambda ,F(x) +Gy\rangle+\frac{\rho}{2}{\|F(x)+Gy\|^2}.
\]
The gradient of $\psi_{\rho}$ is given by:
\[
\begin{cases}
     \nabla_x\psi_{\rho}(x,y,\lambda)=\nabla f(x)+{\nabla F(x)}^T\left(\lambda+\rho \left(F(x)+Gy\right)\right),\\
      \nabla_y\psi_{\rho}(x,y,\lambda)={G}^T\left(\lambda+\rho \left(F(x)+Gy\right)\right),\\
      \nabla_{\lambda}\psi_{\rho}(x,y,\lambda)=F(x)+Gy.
\end{cases}
\]
\begin{remark}
Note that if Assumption \ref{assump2} holds on a compact set $\mathcal{S}_x\times\mathcal{S}_y\subseteq \text{dom }g\times\mathcal{Y}$, then for any ball  \(\mathbb{B}_r \subset \mathbb{R}^m\)  centered at zero with radius \(0 \leq r < \infty\), the function  $\psi_{\rho}$  has  Lipschitz continuous gradient on the compact set $\mathcal{S}_x\times\mathcal{S}_y\times\mathbb{B}_r$ with Lipschitz constant (see Lemma 4.1 in \cite{CohHal:21}):
\begin{align*}
L_\psi=L_f +  \!\! \sup_{(x,y,\lambda)\in\mathcal{S}_x\times\mathcal{S}_y\times\mathbb{B}_r}  \!\!  \{L_F \|\lambda + \rho(F(x)+Gy)\|\}  + (M_F + \|G\|)(2 + \rho(M_F +\|G\|)).
\end{align*}
\end{remark}

\noindent \redd{  Let us now define the Hausdorff distance between two bounded sets, which will be used in our convergence analysis.
\begin{definition}[Hausdorff distance \cite{Don:21}]  
Given two bounded sets \( A , B \subset \mathbb{R}^p\), the Hausdorff distance between \( A \) and \( B \) is defined as:  
\[
\dist_H(A, B) := \max \left\{ \sup_{a \in A} \dist(a, B), \ \sup_{b \in B} \dist(A, b) \right\},
\]
where \(\dist(a, B) = \inf_{b \in B} \|a - b\|\) and \(\dist(A, b) = \inf_{a \in A} \|a - b\|\).
\end{definition}
\noindent Let \(\mathcal{Y} \subseteq \mathbb{R}^p\) be a nonempty closed set.   We denote a space of  nonempty bounded subsets of the normal cones of \(\mathcal{Y}\) as follows: 
\[
\Sigma_{\mathcal{Y}} := \left\{\bar{N}_{\mathcal{Y}}(y) \mid  \bar{N}_{\mathcal{Y}}(y) \subset N_{\mathcal{Y}}(y), \bar{N}_{\mathcal{Y}}(y) \; \text{bounded},  \;  y \in \mathcal{Y} \right\},
\]
where \(N_{\mathcal{Y}}(y)\) is the normal cone to \(\mathcal{Y}\) at \(y\). 
For example, for a general set $\mathcal{Y}$ and for a given $r>0$, we can define: 
\begin{equation} \label{normal1}
    \bar{N}_{\mathcal{Y}}(y) = N_{\mathcal{Y}}(y) \cap \mathbb{B}_r \quad \forall y\in \mathcal{Y},
\end{equation}
where \(\mathbb{B}_r \subset \mathbb{R}^p\) denotes the ball centered at zero with radius \(r\),  and then:
\[
\Sigma_{\mathcal{Y}} := \left\{  N_{\mathcal{Y}}(y) \cap \mathbb{B}_r \mid  y \in \mathcal{Y} \right\}. 
\]
Moreover, if  $\mathcal{Y}$ is a differentiable manifold, i.e., $\mathcal{Y} := \{y\in\mathbb{R}^p | H(y)=0\}$, where $H: \mathbb{R}^p\to \mathbb{R}^q$ is a continuously differentiable function with its Jacobian having full row rank on $\mathcal{Y}$, then its normal cone is described by $N_{\mathcal{Y}}(y) = \{w\in \mathbb{R}^p | w=\nabla H(y)^T\lambda, \lambda\in\mathbb{R}^q\}$ and we can define: 
\begin{equation}\label{normal2}
    \bar{N}_{\mathcal{Y}}(y) =  \{w\in \mathbb{R}^p | w=\nabla H(y)^T\lambda, \lambda\in \mathbb{B}_r \} \quad \forall y\in \mathcal{Y},
\end{equation}
and then:
\[
\Sigma_{\mathcal{Y}} := \left\{   \{\nabla H(y)^T\lambda, \lambda\in \mathbb{B}_r\} \mid \;  y\in\mathcal{Y}\right\}.
\]
 We equip such a space \(\Sigma_{\mathcal{Y}}\) with the  metric given by the Hausdorff distance.  Then, in the sequel, we also impose  the following restriction on the set $\mathcal{Y}$:
\begin{assumption}\label{assum_lips}
We assume that the set $\mathcal{Y}$ admits a Lipschitz continuous normal cone  mapping, i.e., for  $\bar{N}_{\mathcal{Y}}(\cdot)$ either of the form \eqref{normal1} or \eqref{normal2}, there exists  $\kappa>0$ such that:
    \[
\dist_H\left(\bar{N}_{\mathcal{Y}}(y), \bar{N}_{\mathcal{Y}}(y')\right)) \leq \kappa \|y - y'\| \quad \forall y, y' \in \mathcal{Y}.
\]
\end{assumption}
\noindent Below, we provide some examples when Assumption \ref{assum_lips} holds.
\begin{lemma}\label{claim1}[Sets with Lipschitz continuous normal cones]  
For any set of the form \(\mathcal{Y}:=\left\{y \in\mathbb{R}^p | H(y) = 0 \right\}\), where $H: \mathbb{R}^p \to \mathbb{R}^q$ has the Jacobian, $\nabla H$, Lipschitz continuous and,  additionally, it is of full row rank on $\mathcal{Y}$, the normal cone mapping  $y \rightrightarrows \bar{N}_{\mathcal{Y}} =  \{w\in \mathbb{R}^p | w=\nabla H(y)^T\lambda, \lambda\in \mathbb{B}_r \}$  is Lipschitz continuous for any $r>0$. In particular, the full space $\mathbb{R}^p$, any affine subspace or sphere  yield Lipchitz continuous normal cone mappings  $\bar{N}_{\mathcal{Y}}(\cdot)$, for both  \eqref{normal1} and \eqref{normal2}.
\end{lemma}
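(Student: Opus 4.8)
The plan is to prove the claim first for the manifold case and then specialize to the concrete examples. Consider a set $\mathcal{Y} = \{y \in \mathbb{R}^p \mid H(y) = 0\}$ with $\nabla H$ Lipschitz continuous (say with constant $L_H$) and of full row rank on $\mathcal{Y}$, so that $\bar N_{\mathcal{Y}}(y) = \{\nabla H(y)^T \lambda \mid \lambda \in \mathbb{B}_r\}$. Fix $y, y' \in \mathcal{Y}$. To bound $\dist_H(\bar N_{\mathcal{Y}}(y), \bar N_{\mathcal{Y}}(y'))$ I would take an arbitrary element $w = \nabla H(y)^T \lambda \in \bar N_{\mathcal{Y}}(y)$ with $\|\lambda\| \le r$, and use $w' := \nabla H(y')^T \lambda \in \bar N_{\mathcal{Y}}(y')$ as a competitor. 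Then
\[
\|w - w'\| = \|(\nabla H(y) - \nabla H(y'))^T \lambda\| \le \|\nabla H(y) - \nabla H(y')\| \, \|\lambda\| \le r L_H \|y - y'\|,
\]
so $\dist(w, \bar N_{\mathcal{Y}}(y')) \le r L_H \|y - y'\|$; taking the supremum over $w \in \bar N_{\mathcal{Y}}(y)$ and then the symmetric bound gives $\dist_H(\bar N_{\mathcal{Y}}(y), \bar N_{\mathcal{Y}}(y')) \le \kappa \|y - y'\|$ with $\kappa = r L_H$. This handles form \eqref{normal2}.

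For form \eqref{normal1}, I would argue that $N_{\mathcal{Y}}(y) \cap \mathbb{B}_r$ coincides (under full row rank of $\nabla H(y)$) with $\{\nabla H(y)^T \lambda \mid \lambda \in \Lambda_r(y)\}$ for a suitable bounded parameter set, but the cleaner route is to observe that, since $\nabla H(y)$ has full row rank uniformly on the (relevant compact portion of the) manifold, $\lambda \mapsto \nabla H(y)^T\lambda$ is a linear isomorphism onto $N_{\mathcal{Y}}(y)$ with inverse bounded by $1/\sigma_{\min}(\nabla H(y)) \le 1/\sigma_0$; hence every $w \in N_{\mathcal{Y}}(y) \cap \mathbb{B}_r$ is $\nabla H(y)^T\lambda$ for some $\|\lambda\| \le r/\sigma_0$, and the same competitor-construction argument as above applies, now with $\kappa = (r/\sigma_0) L_H$ (one must also check the competitor $\nabla H(y')^T\lambda$ still lies in $\mathbb{B}_r$, which follows because its norm is at most $\|\nabla H(y')\| \cdot r/\sigma_0$, and this can be absorbed by enlarging $r$ or by noting the estimate is only needed up to constants in the Lipschitz bound).

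Finally, for the specific examples: for $\mathcal{Y} = \mathbb{R}^p$ we have $N_{\mathcal{Y}}(y) = \{0\}$ for all $y$, so $\bar N_{\mathcal{Y}}(\cdot) \equiv \{0\}$ and the Hausdorff distance is identically zero — Lipschitz with constant $0$. For an affine subspace $\mathcal{Y} = \{y \mid Ay = b\}$ with $A$ full row rank, take $H(y) = Ay - b$, so $\nabla H \equiv A$ is constant, hence trivially Lipschitz with constant $0$, and the normal cone mapping is actually constant, so again $\kappa = 0$. For the sphere $\mathcal{Y} = \{y \mid \|y\|^2 = \rho^2\}$, take $H(y) = \|y\|^2 - \rho^2$, so $\nabla H(y) = 2y^T$, which is globally Lipschitz (constant $2$) and has full row rank (equal to $1$) everywhere on $\mathcal{Y}$ since $y \ne 0$ there; the general argument then yields the claim. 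I expect the only genuinely delicate point to be the equivalence/boundedness step relating $N_{\mathcal{Y}}(y) \cap \mathbb{B}_r$ to a bounded set of multipliers $\lambda$ uniformly in $y$ — this needs a uniform lower bound $\sigma_0$ on $\sigma_{\min}(\nabla H(y))$ over $\mathcal{Y}$, which is clear on a compact portion of $\mathcal{Y}$ and holds globally in all the stated examples; everything else is a one-line Cauchy–Schwarz / operator-norm estimate.
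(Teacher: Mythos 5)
Your argument for the manifold case \eqref{normal2} is exactly the paper's proof: for $w=\nabla H(y)^T\lambda$ you use the competitor $\nabla H(y')^T\lambda$ with the \emph{same} multiplier, apply the operator-norm bound, and obtain $\kappa=rL_H$; the symmetric bound then gives the Hausdorff estimate, and the three examples are dispatched the same way. The only place you go beyond the paper is the \eqref{normal1} case, where the paper simply lists the explicit normal cones of the three examples and says ``similar argument as above'' — you correctly identify the two genuine subtleties it glosses over: (a) parametrizing $N_{\mathcal{Y}}(y)\cap\mathbb{B}_r$ by a \emph{uniformly} bounded multiplier set requires a uniform lower bound $\sigma_0$ on $\sigma_{\min}(\nabla H(y))$ (which holds in all three examples), and (b) the competitor $\nabla H(y')^T\lambda$ need not lie in $\mathbb{B}_r$. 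For (b), your phrase ``absorbed by enlarging $r$'' is too loose as stated, since the set is defined with a fixed $r$; the clean fix is to rescale the competitor radially, i.e.\ replace $w'$ by $(r/\|w'\|)w'$ when $\|w'\|>r$, which stays in the cone intersected with $\mathbb{B}_r$ and at most doubles the distance, so the Lipschitz constant becomes $2(r/\sigma_0)L_H$. With that one sentence made precise, your proof is complete and in fact slightly more careful than the paper's.
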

\begin{proof}
 Let $H$ have the Jacobian Lipschitz continuous with Lipschitz constant $L_H>0$ and, additionally,  the Jacobian has full row rank on $\mathcal{Y}$. Then, the tangent and normal cones of $\mathcal{Y}$ at a given point $y\in\mathcal{Y}$,  $T_{\mathcal{Y}}(y)$ and $ N_{\mathcal{Y}}(y)$, are defined  as:
\[
T_{\mathcal{Y}}(y) = \left\{ v\in \mathbb{R}^p | \;  \nabla H(y) v = 0\right\} \quad \text{and} \quad   N_{\mathcal{Y}}(y) = \left\{ w\in \mathbb{R}^p | \;  w = \nabla H(y)^T \lambda, \;  \lambda \in \mathbb{R}^q\right\},
\]
respectively. Further,  we have:
\begin{align*}
   & \dist_H(\bar{N}_{\mathcal{Y}}(y), \bar{N}_{\mathcal{Y}}(y'))  \\
    &= \max\left\{\sup_{a\in \bar{N}_{\mathcal{Y}}(y) }\dist_H\left( a,\bar{N}_{\mathcal{Y}}(y')\right), \sup_{b\in \bar{N}_{\mathcal{Y}}(y') }\dist_H\left( \bar{N}_{\mathcal{Y}}(y), b \right) \right\}. 
\end{align*}
Since the Jacobian has full row rank and $r>0$, we have from \eqref{normal2}:
\begin{align*}
    &\sup_{a\in \bar{N}_{\mathcal{Y}}(y) }\dist_H\left( a,\bar{N}_{\mathcal{Y}}(y') \right) \\
    &= \max_{\lambda\in\mathbb{B}_r} \min_{\theta\in\mathbb{B}_r}\|\nabla H(y)^T\lambda - \nabla H(y')^T\theta\|\\
    &=\max_{\lambda\in\mathbb{B}_r}\min_{\theta\in\mathbb{B}_r}\|\left(\nabla H(y)- \nabla H(y')\right)^T \lambda + \nabla H(y')^T(\lambda-\theta)\|\\
    & \leq \max_{\lambda\in\mathbb{B}_r} \left(\|\left(\nabla H(y)- \nabla H(y')\right)^T\lambda\| +\min_{\theta\in\mathbb{B}_r} \|\nabla H(y')^T(\lambda-\theta)\| \right)\\
    & = \max_{\lambda\in\mathbb{B}_r}\|\left(\nabla H(y)- \nabla H(y')\right)^T\lambda\|   \\
    & \leq \|\nabla H(y)- \nabla H(y')\|\max_{\lambda\in\mathbb{B}_r}\|\lambda\|   \\
    & \leq L_H \|y-y'\| r = \kappa \|y-y'\|,
\end{align*}
where $\kappa = L_H \times r$. Similarly, by simmetry arguments,  we have:
\begin{align*}
    \sup_{b\in \bar{N}_{\mathcal{Y}}(y')}\dist_H\left( \bar{N}_{\mathcal{Y}}(y), b\right) \leq \kappa \|y-y'\|.
\end{align*}
Hence, there exists $\kappa=r\times L_H >0$  such that:
    \[
\dist_H\left(\bar{N}_{\mathcal{Y}}(y), \bar{N}_{\mathcal{Y}}(y')\right)) \leq \kappa \|y - y'\| \quad \forall y, y' \in \mathcal{Y}.
\]
Since affine subspaces and spheres are special cases of the set $\mathcal{Y}= \{y | H(y)=0\}$, from the proof above it follows that the corresponding normal cone mapping $\bar{N}_{\mathcal{Y}}(y)$ defined by \eqref{normal2} is Lipschitz continuous. On the other hand, for $\bar{N}_{\mathcal{Y}}(y)$ defined in \eqref{normal1}, we have:
\begin{align*}
    & \text{full space } (\mathbb{R}^p): \; \bar{N}_{\mathcal{Y}}(y) = {N}_{\mathcal{Y}}(y) =  \{ 0 \},\\
    & \text{affine subspace } (Ay=b): \; \bar{N}_{\mathcal{Y}}(y) = \{A^T\lambda \mid  \lambda\in\mathbb{R}^q \}\cap \mathbb{B}_r,\\
    & \text{sphere } (\|y\|^2=1): \; \bar{N}_{\mathcal{Y}}(y) = \{\lambda y \mid  \lambda\geq 0\} \cap \mathbb{B}_r,
\end{align*}
and then  using similar argument as above, the second claim follows.
\end{proof}
 \noindent The reader may find other examples of sets $\mathcal{Y}$ satisfying Assumption \ref{assum_lips}.
}
\noindent  Let  us  also introduce the  Kurdyka-Lojasiewicz (KL) property, which will lead to improvements in the convergence rates of our algorithm.  Let	 $\Phi:\mathbb{R}^d\to\bar{\mathbb{R}}$ be a proper lsc function. For $-\infty<\tau_1<\tau_2\leq
+\infty$, we define $[\tau_1<\Phi<\tau_2]=\{x \in\mathbb{R}^d :\tau_1<\Phi(x)<\tau_2\}$. 
Denote  $\Psi_{\tau}$ the set of all continuous concave functions $\varphi: [0, \tau] \to [0,+\infty)$ such
that $\varphi(0) = 0$ and $\varphi$ is continuously differentiable on $(0, \tau)$, with $\varphi'(s) > 0$ over $(0, \tau)$.

\begin{definition} \label{def2}
Let $\Phi : \mathbb{R}^d \to \bar{\mathbb{R}}$  be a proper lower semicontinuous function that takes constant value on a set $\Omega \subseteq \mathbb{R}^d$. We say that $\Phi$ satisfies the KL property on $\Omega$ if there exists $ \epsilon>0, \tau>0$, and $\varphi\in\Psi_{\tau}$ such that for every
$x^* \in \Omega$ and any  $x$ in the intersection $\{x\in\mathbb{R}^d: \text{ dist}(x,\Omega)<\epsilon\}\cap[\Phi(x^*)<\Phi(x)<\Phi(x^*)+\tau]$, we have:
\[
    \varphi'\big(\Phi(x) - \Phi(x^*)\big)\dist\big(0, \partial\Phi(x)\big) \geq1.
\] 
\end{definition}

\noindent  The KL property holds for a large class of functions including semi-algebraic functions (e.g., real polynomial functions), vector or matrix (semi)norms (e.g., $\|\cdot\|_p$ with $p \geq 0$ rational number), logarithm functions,  exponential functions and  uniformly convex functions,  see \cite{BolDan:07} for a comprehensive list. For the rest of this paper, we use the following notation:
\[
 l_{\psi_{\rho}}(x,\bar{y},\bar{\lambda};\bar{x}):=\psi_{\rho}(\bar{x},\bar{y},\bar{\lambda})+\langle\nabla \psi_{\rho}(\bar{x},\bar{y},\bar{\lambda}\redd{)},x-\bar{x}\rangle\hspace{0.5cm}\forall x,\bar{x}.
\]
The augmented Lagrangian associated with \eqref{eq1} is:
\begin{align*}
 \mathcal{L}_{\rho}(x,y,\lambda)&=f(x)+g(x)+h(y)+\langle \lambda,F(x)+Gy \rangle+\frac{\rho}{2}{\|F(x)+Gy\|^2}\\
  &=g(x)+h(y)+\psi_{\rho}(x,y,\lambda).  
\end{align*}
Given a pair $(\bar{x},\bar{y})$, we introduce the following  linearized augmented Lagrangian:
\begin{align*}
 \bar{\mathcal{L}}_{\rho}(x,y,\lambda;\bar{x},\bar{y})=l_f(x;\bar{x})+g(x)+l_h(y;\bar{y}) +\langle {\lambda} ,l_F(x;\bar{x})+Gy \rangle+\frac{\rho}{2}{\|l_F(x;\bar{x})+Gy\|^2}.   
\end{align*}
Note that this approximation, $\bar{\mathcal{L}}_{\rho}$, of the true augmented Lagrangian, ${\mathcal{L}}_{\rho}$, retains  curvature information from $F$ through the term $\nabla F^T \nabla F$.

\section{A new Inexact Linearized ADMM algorithm}\label{sec3}
\noindent \redd{In this section, we propose an augmented Lagrangian-based method (Algorithm \ref{alg1}), which shares similarities with the approach introduced in \cite{CohHal:21}, albeit featuring a distinctive update for the primal variables (refer to Steps 4 and 5 below). While \cite{CohHal:21} linearizes the smooth part of the augmented Lagrangian function, $\psi_{\rho}$, with respect to $x$ and incorporates  a dynamic quadratic regularization term, our methodology adopts a Gauss-Newton type approach; linearizing the nonlinear functional constraint $F$ within $\psi_{\rho}$. This choice enhances the accuracy of our model's approximation to the original augmented Lagrangian function compared to the method in \cite{CohHal:21}. The reason being, the linearization technique used in \cite{CohHal:21} neglects curvature information about the nonlinear constraints $F$, whereas our algorithm leverages (partial) curvature information from $F$ through the term $\nabla F^T \nabla F$ present in $\bar{\mathcal{L}}_{\rho}$.
This improved approximation of the augmented Lagrangian not only provides theoretical advantages but also demonstrates practical implications, as demonstrated in our numerical simulations. Furthermore, in our approach, we solve the subproblem in Step 4 inexactly, diverging from \cite{CohHal:21}, where an exact solution is sought. Additionally, the regularization in Step 5 is dynamically chosen in our case, in contrast to \cite{CohHal:21}, where it is static, and additionally, we allow explicit constraints on $y$.}

\begin{algorithm}
\caption{Inexact Linearized ADMM (iL-ADMM)}\label{alg1}
\begin{algorithmic}[1]
\State  $\textbf{Initialization: } x_0, y_0, \lambda_0 \; \text{and} \;\rho, \theta_0, \beta_0, \alpha>0$
\State $k \gets 0$
\While{$\text{ stopping criterion is not satisfied }$}
    \State $\text{generate a proximal parameter } \beta_{k+1}\geq\beta_0$ such that
  $$x_{k+1} \approx \arg\min_{x}{\bar{\mathcal{L}}_{\rho}(x,y_{k},\lambda_{k};x_{k},y_{k})+\frac{\beta_{k+1}}{2}{\|x-x_{k}\|}^2}$$ satisfies an inexact stationary condition and a descent:
  \begin{equation*}
      \exists s_{k+1}\in\partial_x\left(\bar{\mathcal{L}}_{\rho}(x,y_{k},\lambda_{k};x_{k},y_{k})+\frac{\beta_{k+1}}{2}{\|x-x_{k}\|}^2\right)\redd{\bigg|_{x=x_{k+1}}}
  \end{equation*}
         \text{such that}
         \begin{equation}
        \|s_{k+1}\|\leq\alpha\|x_{k+1}-x_k\|\label{inexactness},
         \end{equation}
  \begin{equation}
    \psi_{\rho}(x_{k+1},y_{k},\lambda_{k})-l_{\psi_{\rho}}(x_{k+1},y_{k},\lambda_{k};x_k)\leq\frac{\beta_{k+1}}{4}\|x_{k+1}-x_{k}\|^2.\label{eq_assu}
\end{equation}
    \State $\text{generate a proximal parameter } \theta_{k+1}\geq\theta_0 $ such that
 $$ y_{k+1}\gets\arg\min_{y {\in\mathcal{Y}}}{\bar{\mathcal{L}}_{\rho}(x_{k+1},y,\lambda_{k};x_{k+1},y_{k})+\frac{\theta_{k+1}}{2}{\|y-y_{k}\|}^2} $$ 
    satisfies the following inequality:
   \begin{align}\label{eq_assu1}
  & h(y_{k+1})-l_h(y_{k+1};y_k)
  \leq\frac{\theta_{k+1}}{4}\|y_{k+1}-y_{k}\|^2.
\end{align}
    \State Update $$\lambda_{k+1}\gets\lambda_{k}+\rho \left(F(x_{k+1})+Gy_{k+1}\right).$$ 
    \State $k \gets k+1$
\EndWhile
\end{algorithmic}
\end{algorithm}

\medskip 

\noindent Note that the dominant steps in Algorithm \ref{alg1} are Step 4 and Step 5, as the former involves the nonsmooth function \( g \) in addition to a quadratic term. When \( g \) is convex or weakly convex, the objective function of the subproblem in Step 4 is strongly convex. In contrast, Step 5 involves projecting onto the set \( \mathcal{Y} \), which may be nonconvex. However, when \( \mathcal{Y} \) is convex, the objective function of the subproblem in Step 5 of Algorithm \ref{alg1} is always a strongly convex quadratic function, even if \( h \) is nonconvex.  The dual variables are updated in Step 6 using the conventional update of the dual multipliers in traditional augmented Lagrangian-based methods, see  \cite{RocWet:98}. 
\redd{
\noindent Before proceeding, we introduce an  assumption about the sequence of iterates generated by Algorithm \ref{alg1}, which will play a crucial role in the subsequent convergence analysis.
\begin{assumption} \label{assum:bounded_iter}
The sequence \(\{(x_k, y_k, \lambda_k)\}_{k \geq 0}\) generated by Algorithm \ref{alg1} is bounded.
\end{assumption}}

\medskip 

\noindent \redd{ This assumption is standard in the context of nonconvex nonsmooth optimization, see e.g.,  \cite{HalTeb:23, CohHal:21, BolSab:18}. Boundedness of the primal iterates can be ensured e.g., if $\text{dom }g$ and $\mathcal{Y}$ are bounded sets or the augmented Lagrangian function is coercive or level bounded. However, proving boundedness of the dual iterates in the nonconvex setting remains an open question as pointed out in   \cite{HalTeb:23}.  
}

\medskip 

\noindent Note that  $\beta_k$ and $\theta_k$ in Algorithm \ref{alg1} are well defined since $\psi_{\rho}$  and $h$ are smooth functions according to  Assumption \ref{assump2}. To determine these regularization parameters, one approach is to use a backtracking scheme, as described in Algorithm 2 in \cite{CohHal:21}. 

\medskip

\begin{remark}\label{strange_assum} \redd{If Assumption \ref{assum:bounded_iter} holds and Assumption \ref{assump2} is satisfied on a compact set containing the iterates, then $\psi_{\rho}$ and $h$ are smooth functions on this compact set. Consequently, for any $k \geq 0$, it is always possible to determine $\beta_{k+1}$ and $\theta_{k+1}$ that satisfy \eqref{eq_assu} and \eqref{eq_assu1}, respectively.}
 Moreover, we have:
\begin{equation}\label{bounds_beta_theta}
   \beta:=\sup_{k\geq1}{\beta_{k}}\leq 2L_\psi, \hspace{0.5cm} \theta:=\sup_{k\geq1}{\theta_{k}}\leq 2L_h. 
\end{equation}

\end{remark}

\medskip

\begin{remark}
Note that any descent algorithm, initialized at the current iterate $x_k$, applied for solving the simple (possibly nonconvex) subproblem in Step 4 can always  ensure the  descent:
\begin{align}
\bar{\mathcal{L}}_{\rho}(x_{k+1},y_{k},\lambda_{k};x_{k},y_{k}) & +\frac{\beta_{k+1}}{2}{\|x_{k+1}-x_{k}\|}^2\leq\bar{\mathcal{L}}_{\rho}(x_{k},y_{k},\lambda_{k};x_{k},y_{k}). \label{optimal}
\end{align}
Hence, in the sequel we assume that $x_{k+1}$ automatically satisfies the descent \eqref{optimal}, besides the conditions \eqref{inexactness} and \eqref{eq_assu}. It is also worth mentioning that we can also impose other definitions for inexactness, e.g.,  one can replace the condition \eqref{inexactness}  with the following one: choose  $\beta_{k+1} > \alpha$ such that
\begin{align*}
\bar{\mathcal{L}}_{\rho}(x_{k+1}, y_{k},\lambda_{k}; x_{k}, y_{k}) + \frac{\beta_{k+1}}{2}\|x_{k+1} - x_{k}\|^2 
     \leq \bar{\mathcal{L}}_{\rho}(x_{k}, y_{k}, \lambda_{k}; x_{k}, y_{k}) + \frac{\alpha}{4}\|x_{k+1} - x_{k}\|^2.
\end{align*} 
\end{remark}

\noindent Our convergence results  are also valid under this inexact  setting.   Let us denote  the difference of the steps in $x, y$ and $\lambda$, for all $k\geq 1$ as:
\[
\Delta x_{k}=x_{k}-x_{k-1}, \hspace{0.15cm}\Delta y_{k}=y_{k}-y_{k-1} \hspace{0.15cm} \text{and} \hspace{0.15cm} \Delta\lambda_{k}=\lambda_{k}-\lambda_{k-1}.
\]


\section{Convergence analysis for iL-ADMM}\label{sec4}
\noindent In this section, we first derive the asymptotic convergence, then first-order complexity and finally improved rates under the KL condition for the proposed scheme iL-ADMM (Algorithm \ref{alg1}).  

\subsection{Asymptotic convergence}
\noindent First, let us  derive the asymptotic convergence of iL-ADMM. We start proving the decrease with respect to the first argument of the augmented Lagrangian function. 

\begin{lemma}\label{lemma3}[Descent of $\mathcal{L}_{\rho}$ w.r.t. the first block of primal variables] Let  $\{(x_k,y_k, \lambda_k\}_{k\geq1}$ be the sequence generated by Algorithm \ref{alg1}. If Assumption \ref{assum:bounded_iter} holds and  Assumption \ref{assump2} is satisfied on a compact set where the iterates belong to, then  we have the following descent for $\mathcal{L}_{\rho}$ w.r.t. $x$: 
\begin{align*}
\mathcal{L}_{\rho}(x_{k+1},y_{k},\lambda_{k})\leq\mathcal{L}_{\rho}(x_{k},y_{k},\lambda_{k})-\frac{\beta_{k+1}}{4}\|x_{k+1}-x_{k}\|^2 \quad \forall k \geq 0.
\end{align*}
\end{lemma}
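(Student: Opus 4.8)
The goal is to show a descent of the true augmented Lagrangian $\mathcal{L}_\rho$ with respect to the first block $x$ along the iterates. The key point is that the algorithm works with the \emph{linearized} augmented Lagrangian $\bar{\mathcal{L}}_\rho$, so I need to transfer the descent guaranteed for $\bar{\mathcal{L}}_\rho$ (inequality \eqref{optimal}) to a descent for $\mathcal{L}_\rho$, paying a penalty controlled by the gap between the linearized and exact objectives, which is exactly what the inexactness/descent condition \eqref{eq_assu} quantifies.

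\medskip

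\noindent First I would write out $\mathcal{L}_\rho(x_{k+1},y_k,\lambda_k) = g(x_{k+1}) + h(y_k) + \psi_\rho(x_{k+1},y_k,\lambda_k)$ and $\bar{\mathcal{L}}_\rho(x_{k+1},y_k,\lambda_k;x_k,y_k) = l_f(x_{k+1};x_k) + g(x_{k+1}) + l_h(y_k;y_k) + \langle \lambda_k, l_F(x_{k+1};x_k) + Gy_k\rangle + \tfrac{\rho}{2}\|l_F(x_{k+1};x_k)+Gy_k\|^2$. Note $l_h(y_k;y_k) = h(y_k)$. The difference $\mathcal{L}_\rho(x_{k+1},y_k,\lambda_k) - \bar{\mathcal{L}}_\rho(x_{k+1},y_k,\lambda_k;x_k,y_k)$ then reduces to $\psi_\rho(x_{k+1},y_k,\lambda_k) - \big[l_f(x_{k+1};x_k) + \langle\lambda_k, l_F(x_{k+1};x_k)+Gy_k\rangle + \tfrac{\rho}{2}\|l_F(x_{k+1};x_k)+Gy_k\|^2\big]$. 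The crucial observation is that this bracketed quantity is precisely the linearization $l_{\psi_\rho}(x_{k+1},y_k,\lambda_k;x_k)$ of $\psi_\rho$ in its first argument at $x_k$: indeed $\nabla_x\psi_\rho(x_k,y_k,\lambda_k) = \nabla f(x_k) + \nabla F(x_k)^T(\lambda_k + \rho(F(x_k)+Gy_k))$, and expanding $l_f(x_{k+1};x_k) + \langle\lambda_k, l_F(x_{k+1};x_k)+Gy_k\rangle + \tfrac{\rho}{2}\|l_F(x_{k+1};x_k)+Gy_k\|^2$ as a function of $x_{k+1}$ gives an affine-plus-quadratic expression; however, comparing it to $\psi_\rho(x_k,y_k,\lambda_k) + \langle\nabla_x\psi_\rho(x_k,y_k,\lambda_k), x_{k+1}-x_k\rangle$ shows they differ only by the quadratic term $\tfrac{\rho}{2}\|\nabla F(x_k)(x_{k+1}-x_k)\|^2 \geq 0$. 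So in fact $l_f(x_{k+1};x_k) + \langle\lambda_k, l_F(x_{k+1};x_k)+Gy_k\rangle + \tfrac{\rho}{2}\|l_F(x_{k+1};x_k)+Gy_k\|^2 \geq l_{\psi_\rho}(x_{k+1},y_k,\lambda_k;x_k)$, hence $\mathcal{L}_\rho(x_{k+1},y_k,\lambda_k) - \bar{\mathcal{L}}_\rho(x_{k+1},y_k,\lambda_k;x_k,y_k) \leq \psi_\rho(x_{k+1},y_k,\lambda_k) - l_{\psi_\rho}(x_{k+1},y_k,\lambda_k;x_k) \leq \tfrac{\beta_{k+1}}{4}\|x_{k+1}-x_k\|^2$ by \eqref{eq_assu}.

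\medskip

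\noindent Next, using the descent \eqref{optimal} for the linearized subproblem, $\bar{\mathcal{L}}_\rho(x_{k+1},y_k,\lambda_k;x_k,y_k) + \tfrac{\beta_{k+1}}{2}\|x_{k+1}-x_k\|^2 \leq \bar{\mathcal{L}}_\rho(x_k,y_k,\lambda_k;x_k,y_k)$, and observing that $\bar{\mathcal{L}}_\rho(x_k,y_k,\lambda_k;x_k,y_k) = \mathcal{L}_\rho(x_k,y_k,\lambda_k)$ (all linearizations are exact at the base point and the quadratic $\|l_F(x_k;x_k)+Gy_k\|^2 = \|F(x_k)+Gy_k\|^2$), I combine the two bounds:
\begin{align*}
\mathcal{L}_\rho(x_{k+1},y_k,\lambda_k) &\leq \bar{\mathcal{L}}_\rho(x_{k+1},y_k,\lambda_k;x_k,y_k) + \tfrac{\beta_{k+1}}{4}\|x_{k+1}-x_k\|^2 \\
&\leq \mathcal{L}_\rho(x_k,y_k,\lambda_k) - \tfrac{\beta_{k+1}}{2}\|x_{k+1}-x_k\|^2 + \tfrac{\beta_{k+1}}{4}\|x_{k+1}-x_k\|^2 \\
&= \mathcal{L}_\rho(x_k,y_k,\lambda_k) - \tfrac{\beta_{k+1}}{4}\|x_{k+1}-x_k\|^2,
\end{align*}
which is exactly the claim. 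The smoothness hypothesis (Assumption \ref{assump2} on a compact set containing the iterates, guaranteed by Assumption \ref{assum:bounded_iter}) is what makes $\psi_\rho$ differentiable so that $l_{\psi_\rho}$ and condition \eqref{eq_assu} make sense, and it is also needed for $\beta_{k+1}$ to be well defined per Remark \ref{strange_assum}.

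\medskip

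\noindent \textbf{Main obstacle.} The only delicate point is the algebraic identity relating $\bar{\mathcal{L}}_\rho(x_{k+1},\cdot;x_k,\cdot)$, the linearization $l_{\psi_\rho}(x_{k+1},\cdot;x_k)$, and $\psi_\rho$ — specifically recognizing that the ``extra'' Gauss--Newton curvature term $\tfrac{\rho}{2}\|\nabla F(x_k)(x_{k+1}-x_k)\|^2$ appears with a favorable sign, so that $\bar{\mathcal{L}}_\rho$ dominates the plain linearization of $\psi_\rho$ and condition \eqref{eq_assu} can be invoked. Everything else is bookkeeping: matching base-point values and assembling the inequalities.
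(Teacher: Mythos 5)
Your proof is correct and follows essentially the same route as the paper: both arguments combine the descent \eqref{optimal} with condition \eqref{eq_assu}, and both hinge on discarding the nonnegative Gauss--Newton term $\tfrac{\rho}{2}\|\nabla F(x_k)(x_{k+1}-x_k)\|^2$ so that $\bar{\mathcal{L}}_{\rho}$ dominates the linearization $l_{\psi_\rho}$. The only difference is organizational — you bound $\mathcal{L}_{\rho}-\bar{\mathcal{L}}_{\rho}$ directly, while the paper first isolates $g(x_{k+1})-g(x_k)$ and then adds the $\psi_\rho$ increment — but the algebra is identical in substance.
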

\begin{proof}  From the definition of $x_{k+1}$ and  \eqref{optimal}, we have: 
\begin{align*}
   {\bar{\mathcal{L}}_{\rho}(x_{k+1},y_{k},\lambda_{k};x_{k},y_{k})+\frac{\beta_{k+1}}{2}{\|x_{k+1}-x_{k}\|}^2}\leq\bar{\mathcal{L}}_{\rho}(x_{k},y_{k},\lambda_{k};x_{k},y_{k})={\mathcal{L}}_{\rho}(x_{k},y_{k},\lambda_{k}).
\end{align*}
Further, from definition of $\bar{\mathcal{L}}_{\rho}$ and $\mathcal{L}_{\rho}$, we get: 
\begin{align*}
   & l_f(x_{k+1};x_k)+g(x_{k+1}) +\langle \lambda_{k} ,l_F(x_{k+1};x_{k})\rangle+\frac{\rho}{2}{\|l_F(x_{k+1};x_{k})+Gy_k\|^2}\\
    \leq & f(x_k)+g(x_{k})+\langle \lambda_{k} ,F(x_{k}) \rangle+\frac{\rho}{2}{\|F(x_{k})+Gy_k\|^2}-\frac{\beta_{k+1}}{2}\|\Delta x_{k+1}\|^2.
\end{align*}
Rearranging the above inequality, it follows: 
\begin{align}
  & g(x_{k+1})-g(x_{k})\nonumber\\
   \leq&-\langle \nabla f(x_{k}) ,\Delta x_{k+1} \rangle -\langle \nabla F(x_{k})\Delta x_{k+1},\lambda_{k} \rangle-\frac{\rho}{2}\langle \nabla F(x_{k})\Delta x_{k+1} ,2(F(x_{k})+Gy_k)\rangle\nonumber\\
   &-\frac{\rho}{2}\langle \nabla F(x_{k})\Delta x_{k+1} ,\nabla F(x_{k})\Delta x_{k+1} \rangle-\frac{\beta_{k+1}}{2}\|\Delta x_{k+1}\|^2\nonumber\\
    =&-\langle { \nabla f(x_{k})+\nabla F(x_{k})}^T(\lambda_{k}+\rho(F(x_{k})+Gy_k) ,\Delta x_{k+1} \rangle\nonumber\\
    &-\frac{\rho}{2}\|\nabla F(x_{k})\Delta x_{k+1}\|^2-\frac{\beta_{k+1}}{2}\|\Delta x_{k+1}\|^2\nonumber\\
 \leq&-\langle \nabla_x\psi_{\rho}(x_{k},y_{k},\lambda_{k}) ,\Delta x_{k+1} \rangle -\frac{\beta_{k+1}}{2}\|\Delta x_{k+1}\|^2. \label{use_bellow}
\end{align}
Using the definitions of $\mathcal{L}_{\rho}$ and $\psi_{\rho}$, we further obtain:
\begin{align*}
   & \mathcal{L}_{\rho}(x_{k+1},y_k,\lambda_{k})-\mathcal{L}_{\rho}(x_{k},y_k,\lambda_{k})\\
   &=g(x_{k+1})-g(x_{k})+\psi_{\rho}(x_{k+1},y_k,\lambda_{k})-\psi_{\rho}(x_{k},y_k,\lambda_{k})\\
  &{\overset{{\eqref{eq_assu},\eqref{use_bellow}}}{\leq}}-\frac{\beta_{k+1}}{4}\|x_{k+1}-x_{k}\|^2.   
\end{align*}
This proves our statement.
\end{proof} 

\medskip 
\noindent Let us now prove the decrease with respect to the second argument, $y$, for the augmented Lagrangian function.
\begin{lemma}\label{lemma4}[Descent of $\mathcal{L}_{\rho}$ w.r.t. second block of primal variables]Let  $\{(x_k,y_k, \lambda_k\}_{k\geq1}$ be the sequence generated by Algorithm \ref{alg1}. If Assumption \ref{assum:bounded_iter} holds and  Assumption \ref{assump2} is satisfied on a compact set where the iterates belong to, then  we have the following descent for $\mathcal{L}_{\rho}$ w.r.t. $y$ for all $k\geq0$: 
\[
\mathcal{L}_{\rho}(x_{k+1},y_{k+1},\lambda_{k})\leq\mathcal{L}_{\rho}(x_{k+1},y_{k},\lambda_{k})-\frac{\theta_{k+1}}{4}\|y_{k+1}-y_{k}\|^2.
\]
\end{lemma}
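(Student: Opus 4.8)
The plan is to mirror the proof of Lemma \ref{lemma3}, but now for the $y$-block. First I would invoke the optimality of $y_{k+1}$ in Step 5 of Algorithm \ref{alg1}: since $y_{k+1}$ minimizes $\bar{\mathcal{L}}_{\rho}(x_{k+1},y,\lambda_k;x_{k+1},y_k)+\frac{\theta_{k+1}}{2}\|y-y_k\|^2$ over $y\in\mathcal{Y}$, evaluating at the feasible point $y=y_k$ gives
\[
\bar{\mathcal{L}}_{\rho}(x_{k+1},y_{k+1},\lambda_k;x_{k+1},y_k)+\frac{\theta_{k+1}}{2}\|y_{k+1}-y_k\|^2 \le \bar{\mathcal{L}}_{\rho}(x_{k+1},y_k,\lambda_k;x_{k+1},y_k).
\]
Here the right-hand side simplifies because $l_F(x_{k+1};x_{k+1})=F(x_{k+1})$ and $l_h(y_k;y_k)=h(y_k)$, so that $\bar{\mathcal{L}}_{\rho}(x_{k+1},y_k,\lambda_k;x_{k+1},y_k)=\mathcal{L}_{\rho}(x_{k+1},y_k,\lambda_k)$.

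Next I would expand the left-hand side. Using the definition of $\bar{\mathcal{L}}_{\rho}$ with $\bar{x}=x_{k+1}$, the term $l_F(x_{k+1};x_{k+1})=F(x_{k+1})$ again, and $l_h(y_{k+1};y_k)=h(y_k)+\langle\nabla h(y_k),y_{k+1}-y_k\rangle$, the inequality becomes
\[
l_h(y_{k+1};y_k)+\langle\lambda_k,F(x_{k+1})+Gy_{k+1}\rangle+\frac{\rho}{2}\|F(x_{k+1})+Gy_{k+1}\|^2 \le h(y_k)+\langle\lambda_k,F(x_{k+1})+Gy_k\rangle+\frac{\rho}{2}\|F(x_{k+1})+Gy_k\|^2-\frac{\theta_{k+1}}{2}\|y_{k+1}-y_k\|^2,
\]
after cancelling the common $l_f(x_{k+1};x_{k+1})=f(x_{k+1})$ and $g(x_{k+1})$ terms. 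Rearranging and recognizing $\psi_{\rho}(x_{k+1},\cdot,\lambda_k)$ on both sides, this says
\[
\psi_{\rho}(x_{k+1},y_{k+1},\lambda_k)-\psi_{\rho}(x_{k+1},y_k,\lambda_k) \le -\big(h(y_{k+1})-l_h(y_{k+1};y_k)\big)-\frac{\theta_{k+1}}{2}\|y_{k+1}-y_k\|^2,
\]
where I have added and subtracted $h(y_{k+1})$ to convert the linearized $l_h$ into the true $h$ plus the model error. (Equivalently, one writes $l_h(y_{k+1};y_k) = h(y_{k+1}) - (h(y_{k+1})-l_h(y_{k+1};y_k))$ and notes that $\psi_\rho(x_{k+1},y_{k+1},\lambda_k)$ differs from the bracketed expression on the left only through $h$ versus its linearization.)

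Finally I would apply the descent-type inequality \eqref{eq_assu1} guaranteed by Step 5, namely $h(y_{k+1})-l_h(y_{k+1};y_k)\le\frac{\theta_{k+1}}{4}\|y_{k+1}-y_k\|^2$, to bound the model error, obtaining
\[
\psi_{\rho}(x_{k+1},y_{k+1},\lambda_k)-\psi_{\rho}(x_{k+1},y_k,\lambda_k)\le-\frac{\theta_{k+1}}{4}\|y_{k+1}-y_k\|^2.
\]
Since $\mathcal{L}_{\rho}(x_{k+1},y,\lambda_k)=g(x_{k+1})+h(y)+\psi_{\rho}(x_{k+1},y,\lambda_k)$ and, crucially, $h(y_{k+1})-h(y_k)$ is exactly absorbed when converting $l_h$ to $h$ above, adding $g(x_{k+1})$ to both sides yields $\mathcal{L}_{\rho}(x_{k+1},y_{k+1},\lambda_k)-\mathcal{L}_{\rho}(x_{k+1},y_k,\lambda_k)\le-\frac{\theta_{k+1}}{4}\|y_{k+1}-y_k\|^2$, which is the claim. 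The only subtle point is bookkeeping the $h$ versus $l_h$ substitution consistently so that the true $\mathcal{L}_\rho$ (with $h$, not $l_h$) appears on both sides; the presence of the set constraint $y\in\mathcal{Y}$ causes no difficulty since $y_k\in\mathcal{Y}$ is always a feasible comparison point, so no normal-cone terms enter at this stage. I do not expect any genuine obstacle here — this is essentially the $y$-analogue of Lemma \ref{lemma3}, slightly simpler because $F$ is not linearized in $y$ and the $\nabla F^T\nabla F$ curvature term plays no role.
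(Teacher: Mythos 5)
Your overall strategy is exactly the paper's: use the minimality of $y_{k+1}$ over $\mathcal{Y}$ against the feasible comparison point $y_k$, note that $\bar{\mathcal{L}}_{\rho}(x_{k+1},y_k,\lambda_k;x_{k+1},y_k)=\mathcal{L}_{\rho}(x_{k+1},y_k,\lambda_k)$, observe that $\bar{\mathcal{L}}_{\rho}$ and $\mathcal{L}_{\rho}$ at $(x_{k+1},y_{k+1})$ differ only by the linearization error $h(y_{k+1})-l_h(y_{k+1};y_k)$, and finish with \eqref{eq_assu1}. The conclusion and the mechanism are correct, and the remark that the constraint $y\in\mathcal{Y}$ is harmless because $y_k$ is feasible is exactly the right observation.

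However, your two displayed intermediate inequalities are not what the algebra produces, and as literally written the step between them does not follow from \eqref{eq_assu1}. After cancelling $f(x_{k+1})$ and $g(x_{k+1})$ and using $l_h(y_k;y_k)=h(y_k)$, the optimality inequality gives
\begin{equation*}
\psi_{\rho}(x_{k+1},y_{k+1},\lambda_k)-\psi_{\rho}(x_{k+1},y_k,\lambda_k)\le h(y_k)-l_h(y_{k+1};y_k)-\tfrac{\theta_{k+1}}{2}\|\Delta y_{k+1}\|^2,
\end{equation*}
and only after adding $h(y_{k+1})-h(y_k)$ to both sides (i.e., passing to the $\mathcal{L}_{\rho}$-difference) does the model error appear — with a \emph{plus} sign:
\begin{equation*}
\mathcal{L}_{\rho}(x_{k+1},y_{k+1},\lambda_k)-\mathcal{L}_{\rho}(x_{k+1},y_k,\lambda_k)\le \big(h(y_{k+1})-l_h(y_{k+1};y_k)\big)-\tfrac{\theta_{k+1}}{2}\|\Delta y_{k+1}\|^2,
\end{equation*}
to which the upper bound \eqref{eq_assu1} applies directly. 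Your version puts $-\big(h(y_{k+1})-l_h(y_{k+1};y_k)\big)$ on the right of a bare $\psi_{\rho}$-difference; bounding that by $-\tfrac{\theta_{k+1}}{4}\|\Delta y_{k+1}\|^2$ would require a \emph{lower} bound on the linearization error, which the one-sided condition \eqref{eq_assu1} does not supply, and the subsequent claim that one only needs to "add $g(x_{k+1})$ to both sides" omits the $h(y_{k+1})-h(y_k)$ term that $\mathcal{L}_{\rho}=g+h+\psi_{\rho}$ forces you to account for. Your surrounding prose ("$h(y_{k+1})-h(y_k)$ is exactly absorbed") shows you intend the correct bookkeeping, so this is a sign/labelling slip rather than a conceptual gap — but the displays should be corrected so that \eqref{eq_assu1} is invoked on the right side of the inequality.
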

\begin{proof}
Using  the definition of $\mathcal{L}_{\rho}$ and the optimality conditions for $y_{k+1}$, we have: 
\begin{align*}
   {\bar{\mathcal{L}}_{\rho}(x_{k+1},y_{k+1},\lambda_{k};x_{k},y_{k})+\frac{\theta_{k+1}}{2}{\|y_{k+1}-y_{k}\|}^2}&\leq\bar{\mathcal{L}}_{\rho}(x_{k+1},y_{k},\lambda_{k};x_{k},y_{k})\\
   &={\mathcal{L}}_{\rho}(x_{k+1},y_{k},\lambda_{k}).
\end{align*}
Or, we have:
\begin{align*}
  {\mathcal{L}}_{\rho}(x_{k+1},y_{k+1},\lambda_{k})+l_h(y_{k+1};y_k) =\bar{\mathcal{L}}_{\rho}(x_{k+1},y_{k+1},\lambda_{k};x_{k},y_{k})+h(y_{k+1}).
\end{align*}
Then, it follows that:
\begin{align*}
{{\mathcal{L}}_{\rho}(x_{k+1},y_{k+1},\lambda_{k})-{\mathcal{L}}_{\rho}(x_{k+1},y_{k},\lambda_{k})}
& \leq h(y_{k+1})-l_h(y_{k+1};y_k)-\frac{\theta_{k+1}}{2}{\|\Delta y_{k+1}\|}^2\\
& {\overset{{\eqref{eq_assu1}}}{\leq}}-\frac{\theta_{k+1}}{4}{\|\Delta y_{k+1}\|}^2.
\end{align*}
This completes our proof.
\end{proof}

\medskip 

\noindent  Let us now bound the dual variables by the primal variables. 
\begin{lemma}\label{lambda_bound}[Bound for $\|\Delta\lambda_{k+1}\|$]
Let  $\{(x_k,y_k, \lambda_k\}_{k\geq1}$ be the sequence generated by Algorithm \ref{alg1}. If Assumptions \ref{assum_lips} and \ref{assum:bounded_iter} hold and  Assumption \ref{assump2} is satisfied on a compact set where the iterates belong to, then there exists $\kappa > 0$ such that:
\begin{align}
    \label{lambda_squared}
\|\Delta\lambda_{k+1}\|^2\leq 2\frac{\left(\theta_{k+1}+\kappa\right)^2}{\sigma^2}\|\Delta y_{k+1}\|^2+2\frac{(\theta_{k}+L_h)^2}{\sigma^2}\|\Delta y_{k}\|^2.
\end{align}
\end{lemma}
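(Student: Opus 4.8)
\textbf{Proof plan for Lemma \ref{lambda_bound}.}
The strategy is to extract a useful expression for $\Delta\lambda_{k+1}$ from the optimality condition of the $y$-subproblem in Step 5, then convert that into a bound using Assumption \ref{assump2}\eqref{ass_v} (so that $G^T$ is bounded below), the dual update rule in Step 6, and the Lipschitz continuity of the normal cone mapping (Assumption \ref{assum_lips}). First I would write down the first-order optimality condition for $y_{k+1}$: since $y_{k+1}$ minimizes $\bar{\mathcal{L}}_{\rho}(x_{k+1},y,\lambda_k;x_{k+1},y_k)+\tfrac{\theta_{k+1}}{2}\|y-y_k\|^2$ over $\mathcal{Y}$, there is an element of $N_{\mathcal{Y}}(y_{k+1})$ that equals $-\nabla h(y_k) - G^T\big(\lambda_k + \rho(l_F(x_{k+1};x_{k+1}) + Gy_{k+1})\big) - \theta_{k+1}(y_{k+1}-y_k)$. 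Noting $l_F(x_{k+1};x_{k+1}) = F(x_{k+1})$, and using the dual update $\lambda_{k+1} = \lambda_k + \rho(F(x_{k+1})+Gy_{k+1})$, this simplifies to the statement that $-G^T\lambda_{k+1} - \nabla h(y_k) - \theta_{k+1}(y_{k+1}-y_k) \in N_{\mathcal{Y}}(y_{k+1})$, i.e. $G^T\lambda_{k+1} \in -\nabla h(y_k) - \theta_{k+1}\Delta y_{k+1} - N_{\mathcal{Y}}(y_{k+1})$.

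Next I would apply the same reasoning at the previous iteration to get $G^T\lambda_k \in -\nabla h(y_{k-1}) - \theta_k\Delta y_k - N_{\mathcal{Y}}(y_k)$. Subtracting, and using $\sigma\|\Delta\lambda_{k+1}\| \le \|G^T\Delta\lambda_{k+1}\|$ (from $\sigma_{\min}(G)\ge\sigma$), I obtain
\[
\sigma\|\Delta\lambda_{k+1}\| \le \|\nabla h(y_k)-\nabla h(y_{k-1})\| + \theta_{k+1}\|\Delta y_{k+1}\| + \theta_k\|\Delta y_k\| + \dist_H\!\big(\bar N_{\mathcal{Y}}(y_{k+1}),\bar N_{\mathcal{Y}}(y_k)\big),
\]
where the normal-cone term is handled by choosing for each iterate the specific bounded selection $\bar N_{\mathcal{Y}}$ whose existence is guaranteed by the construction preceding Assumption \ref{assum_lips} (this requires checking that the relevant selections $-G^T\lambda_{k+1}-\nabla h(y_k)-\theta_{k+1}\Delta y_{k+1}$ indeed lie in the bounded piece $\mathbb{B}_r\cap N_{\mathcal{Y}}$, which follows from Assumption \ref{assum:bounded_iter} together with \eqref{assump_conseq} and the bound $\theta_{k+1}\le 2L_h$, for $r$ large enough). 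Then Assumption \ref{assum_lips} gives the normal-cone term $\le \kappa\|\Delta y_{k+1}\|$, and Assumption \ref{assump2}\eqref{h_ass_ii} gives $\|\nabla h(y_k)-\nabla h(y_{k-1})\|\le L_h\|\Delta y_k\|$. Grouping terms,
\[
\sigma\|\Delta\lambda_{k+1}\| \le (\theta_{k+1}+\kappa)\|\Delta y_{k+1}\| + (\theta_k + L_h)\|\Delta y_k\|.
\]
Squaring both sides and applying $(a+b)^2\le 2a^2+2b^2$ yields exactly \eqref{lambda_squared}.

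The main obstacle I anticipate is the careful handling of the normal cone difference: the raw mapping $y\rightrightarrows N_{\mathcal{Y}}(y)$ is a cone and therefore unbounded, so one cannot directly write a Hausdorff-distance bound for it; one must pass through the bounded selections $\bar N_{\mathcal{Y}}(\cdot)$ of Assumption \ref{assum_lips} and verify that the particular normal-cone elements arising from the two consecutive optimality conditions can be realized within $\bar N_{\mathcal{Y}}(y_{k+1})$ and $\bar N_{\mathcal{Y}}(y_k)$ respectively (equivalently, that the radius $r$ defining $\bar N_{\mathcal{Y}}$ can be fixed uniformly along the bounded trajectory). Everything else — the optimality conditions, the substitution $l_F(x_{k+1};x_{k+1})=F(x_{k+1})$, the dual update, the singular value bound, and the elementary inequality — is routine.
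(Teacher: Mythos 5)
Your proposal follows essentially the same route as the paper's proof: write the first-order optimality conditions for $y_{k+1}$ and $y_k$, substitute the dual update to express the normal-cone elements in terms of $G^T\lambda_{k+1}$ and $G^T\lambda_k$, control their difference via the bounded selections $\bar N_{\mathcal{Y}}(\cdot)$ and Assumption \ref{assum_lips}, then use $\sigma_{\min}(G)\ge\sigma$ and $(a+b)^2\le 2a^2+2b^2$. Your explicit attention to why the raw (unbounded) normal cone must be replaced by a uniformly bounded selection along the trajectory is exactly the point the paper handles via the radius $R$ in \eqref{radius}.
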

\begin{proof}
 First, using the optimality condition for $y_{k+1}$ combined   with the update in Step 6 of Algorithm \ref{alg1}, we get:
\begin{equation}\label{eq_lambda1}
 {-\nabla h(y_{k})- G^T\lambda_{k+1}-\theta_{k+1} \Delta y_{k+1}\in N_{\mathcal{Y}}(y_{k+1}).}
\end{equation}
By replacing $k$ with $k-1$, we obtain:
\begin{equation}\label{eq_lambda2}
     {-\nabla h(y_{k-1})- G^T\lambda_{k}-\theta_{k}\Delta y_{k}\in N_{\mathcal{Y}}(y_k)}.
\end{equation}
\redd{
Moreover, using Assumptions \ref{assump2} and \ref{assum:bounded_iter}, if follows that $\theta_k \leq \theta$ from \eqref{bounds_beta_theta} and  there exists a constant $0 < R < \infty$ such that:
\begin{equation} \label{radius} 
\left\|-\nabla h(y_{k-1}) - G^T\lambda_{k} - \theta_{k}\Delta y_{k}\right\| \leq R \quad \forall k \geq 1. 
\end{equation}
Since $\mathcal{Y}$ satisfies Assumption \ref{assum_lips},  it follows that there exists $\kappa>0$ such that
\begin{equation}\label{lipschitz_cone}
    \dist_H(\bar{N}_{\mathcal{Y}}(y_k), \bar{N}_{\mathcal{Y}}(y_{k+1})) \leq \kappa \|y_{k+1} - y_k\| \quad \forall k\geq 1,
\end{equation}
where $\bar{N}_{\mathcal{Y}}(y)$ is defined  either in \eqref{normal1} or \eqref{normal2}.
Note that, if $\bar{N}_{\mathcal{Y}}(y)$ is defined by \eqref{normal1}, i.e.:
     \[\bar{N}_{\mathcal{Y}}(y) = N_{\mathcal{Y}}(y) \cap \mathbb{B}_R,\] with $R$ given  in \eqref{radius}, we always have 
     \begin{equation}\label{eq_lambda11}
          {-\nabla h(y_{k-1})- G^T\lambda_{k}-\theta_{k}\Delta y_{k}\in \bar{N}_{\mathcal{Y}}(y_k)} \quad \forall k\geq1.
     \end{equation}
On the other hand, if   $\bar{N}_{\mathcal{Y}}(y)$ is defined by \eqref{normal2}, i.e.:
     \[
      \bar{N}_{\mathcal{Y}}(y) =  \left\{w\in \mathbb{R}^p | w=\nabla H(y)^T\lambda, \; \lambda\in \mathbb{B}_{\frac{R}{\sigma_H}} \right\},
     \] with $R$ given in \eqref{radius} and $\sigma_H>0$ satisfying $\sigma_H \|\lambda\| \leq \|\nabla H(y)^T\lambda\|$ for any $\lambda \in \mathbb{R}^q$ and $y\in \mathcal{Y}$, we have 
     \begin{equation}\label{eq_lambda22}
          {-\nabla h(y_{k-1})- G^T\lambda_{k}-\theta_{k}\Delta y_{k}\in \bar{N}_{\mathcal{Y}}(y_k)} \quad \forall k\geq1.
     \end{equation}
In conclusion, from \eqref{eq_lambda11} and \eqref{eq_lambda22}  and the definition of the Hausdorff distance, we have for any $ k\geq1$: 
\begin{align*}
 \left\|\nabla h(y_{k})-\nabla h(y_{k-1})+G^T\Delta\lambda_{k+1}  + \theta_{k+1}\Delta y_{k+1}-\theta_{k}\Delta y_{k}\right\|
\leq \dist_H(\bar{N}_{\mathcal{Y}}(y_k), \bar{N}_{\mathcal{Y}}(y_{k+1})) .
\end{align*}
Using \eqref{lipschitz_cone} and the triangle inequality, it follows that:
\begin{align*}
 \left\|G^T\Delta\lambda_{k+1}\right\|
\leq\kappa\|\Delta y_{k+1}\|+\|\nabla h(y_{k})-\nabla h(y_{k-1})\|+\theta_{k+1}\|\Delta y_{k+1}\|+\theta_{k}\|\Delta y_{k}\|.
\end{align*}
 Moreover, since $h$ is smooth, we obtain:
\begin{align*}
\left\|G^T\Delta\lambda_{k+1}\right\|
\leq\left(\kappa+\theta_{k+1}\right)\|\Delta y_{k+1}\|+\left(L_h+\theta_{k}\right)\|\Delta y_{k}\|.
\end{align*}
Further, using  Assumption \ref{assump2}, we get $\forall k\geq1$: 
\begin{align}
   \|\Delta\lambda_{k+1}\|
    \leq\frac{1}{\sigma}\Big(\left(\kappa+\theta_{k+1}\right)\|\Delta y_{k+1}\|+(\theta_{k}+L_h)\|\Delta y_{k}\|\Big). \label{delta_to_replace}
\end{align} 
}
Since $(a+b)^2\leq2a^2+2b^2$, we finally get \eqref{lambda_squared}. 
\end{proof}

\medskip 

\noindent  Our convergence proofs use control theoretic tools such as Lyapunov functions. For our algorithm we define the following Lyapunov  function inspired from \cite{XieWri:21} (see also~\cite{CohHal:21}):
\begin{equation}\label{P}
  P(x,y,\lambda,\bar y,\gamma)=\mathcal{L}_{\rho}(x,y,\lambda)+\frac{\gamma}{2}\|y-\bar y\|^2,
\end{equation}
\noindent with  $\gamma>0$ to be defined later. The evaluation of the Lyapunov function along the iterates of Algorithm 1  is denoted by:
\begin{equation}\label{lyapunov_function}
 P_{k}=P(x_k,y_k,\lambda_k,y_{k-1},\gamma_k) \hspace{0.3cm} \forall k\geq1. 
\end{equation}
 In the following lemma, we prove that the Lyapunov function \eqref{P} decreases along the trajectory generated by  iL-ADMM, i.e.,   $\{P_{k}\}_{k\geq1}$ is a decreasing sequence. 
 
\begin{lemma}\label{decrease}[Decrease] Let  $\{(x_k,y_k, \lambda_k\}_{k\geq1}$ be the sequence generated by Algorithm \ref{alg1}. If Assumptions \ref{assum_lips} and \ref{assum:bounded_iter} hold and  Assumption \ref{assump2} is satisfied on a compact set where the iterates belong to.
  Choosing  
 \begin{gather}\label{gamma_rho}
   \red{ \rho\geq 32\frac{(\theta+\max\{L_h,\kappa\})^2}{\theta_0\sigma^2}}, \quad \gamma_{k}= \frac{\theta_{k}}{4},
 \end{gather}
then the Lyapunov function decreases along the iterates according to the following formula:
 \begin{align}
     P_{k+1}-P_{k}\leq&-\frac{\beta_{k+1}}{4}\|\Delta x_{k+1}\|^2-\frac{\theta_{k+1}}{16}\|\Delta y_{k+1}\|^2-\frac{\theta_k}{16}\|\Delta y_{k}\|^2 \quad \forall k\geq1. \label{decrease_Lyapunov}
 \end{align} 
\end{lemma}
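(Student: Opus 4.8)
The plan is to combine the three descent-type estimates already established—Lemma \ref{lemma3} (descent in $x$), Lemma \ref{lemma4} (descent in $y$), and Lemma \ref{lambda_bound} (bound on $\|\Delta\lambda_{k+1}\|^2$ by $\|\Delta y_{k+1}\|^2$ and $\|\Delta y_k\|^2$)—with the standard telescoping identity for the augmented Lagrangian under the dual update. First I would write
\[
\mathcal{L}_{\rho}(x_{k+1},y_{k+1},\lambda_{k+1})-\mathcal{L}_{\rho}(x_{k+1},y_{k+1},\lambda_{k})
=\langle \Delta\lambda_{k+1},\,F(x_{k+1})+Gy_{k+1}\rangle
=\frac{1}{\rho}\|\Delta\lambda_{k+1}\|^2,
\]
using Step 6 of Algorithm \ref{alg1}, which gives $F(x_{k+1})+Gy_{k+1}=\frac{1}{\rho}\Delta\lambda_{k+1}$. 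Chaining this with Lemmas \ref{lemma3} and \ref{lemma4} yields
\[
\mathcal{L}_{\rho}(x_{k+1},y_{k+1},\lambda_{k+1})-\mathcal{L}_{\rho}(x_{k},y_{k},\lambda_{k})
\leq -\frac{\beta_{k+1}}{4}\|\Delta x_{k+1}\|^2-\frac{\theta_{k+1}}{4}\|\Delta y_{k+1}\|^2+\frac{1}{\rho}\|\Delta\lambda_{k+1}\|^2.
\]

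Next I would insert the dual bound \eqref{lambda_squared} to replace $\frac{1}{\rho}\|\Delta\lambda_{k+1}\|^2$ by
$\frac{2(\theta_{k+1}+\kappa)^2}{\rho\sigma^2}\|\Delta y_{k+1}\|^2+\frac{2(\theta_{k}+L_h)^2}{\rho\sigma^2}\|\Delta y_{k}\|^2$, and then account for the Lyapunov correction term $\frac{\gamma_{k+1}}{2}\|\Delta y_{k+1}\|^2-\frac{\gamma_k}{2}\|\Delta y_k\|^2$ coming from the definition \eqref{P}, \eqref{lyapunov_function}. With $\gamma_k=\theta_k/4$ this contributes $+\frac{\theta_{k+1}}{8}\|\Delta y_{k+1}\|^2-\frac{\theta_k}{8}\|\Delta y_k\|^2$ to $P_{k+1}-P_k$. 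Collecting the coefficient of $\|\Delta y_{k+1}\|^2$ gives $-\frac{\theta_{k+1}}{4}+\frac{\theta_{k+1}}{8}+\frac{2(\theta_{k+1}+\kappa)^2}{\rho\sigma^2}=-\frac{\theta_{k+1}}{8}+\frac{2(\theta_{k+1}+\kappa)^2}{\rho\sigma^2}$, and the coefficient of $\|\Delta y_k\|^2$ is $-\frac{\theta_k}{8}+\frac{2(\theta_k+L_h)^2}{\rho\sigma^2}$. The point of the choice \eqref{gamma_rho}, namely $\rho\geq 32(\theta+\max\{L_h,\kappa\})^2/(\theta_0\sigma^2)$, is exactly to make each of the positive terms at most $\frac{\theta_{k+1}}{16}$ and $\frac{\theta_k}{16}$ respectively: since $\theta_{k+1}+\kappa\leq\theta+\max\{L_h,\kappa\}$ and likewise for $\theta_k+L_h$, one gets $\frac{2(\theta_{k+1}+\kappa)^2}{\rho\sigma^2}\leq\frac{2(\theta+\max\{L_h,\kappa\})^2}{\rho\sigma^2}\leq\frac{\theta_0}{16}\leq\frac{\theta_{k+1}}{16}$, and similarly for the other term using $\theta_0\leq\theta_k$. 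Substituting these bounds produces exactly the claimed inequality \eqref{decrease_Lyapunov}.

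The main technical point to be careful with is the bookkeeping of the $\|\Delta y\|$ terms across indices: the dual bound \eqref{lambda_squared} couples step $k+1$ to step $k$, and the Lyapunov function was deliberately designed (with the $\frac{\gamma}{2}\|y-\bar y\|^2$ memory term, evaluated at $\bar y = y_{k-1}$ in $P_k$) so that these cross-index terms telescope properly; I would double-check the index shifts so that the $-\frac{\theta_k}{16}\|\Delta y_k\|^2$ that survives at step $k$ in \eqref{decrease_Lyapunov} is the same quantity that gets partially cancelled when one later sums the inequality over $k$. I also need the iterates to lie in a fixed compact set on which Assumption \ref{assump2} holds, which is guaranteed by Assumption \ref{assum:bounded_iter}, so that all of Lemmas \ref{lemma3}, \ref{lemma4}, \ref{lambda_bound} and the bounds \eqref{bounds_beta_theta} ($\theta_k\leq\theta$, $\beta_k\leq\beta$) are simultaneously available. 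No genuine obstacle is expected beyond this careful constant-chasing; the heavy lifting was already done in the three preceding lemmas, and this lemma is essentially the assembly step that fixes $\rho$ and $\gamma_k$ to absorb the dual increase into the primal decrease.
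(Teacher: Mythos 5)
Your proof is correct and follows essentially the same route as the paper's: decompose $P_{k+1}-P_k$ via the telescoping sum through $\mathcal{L}_{\rho}(x_{k+1},y_{k+1},\lambda_{k+1})$, $\mathcal{L}_{\rho}(x_{k+1},y_{k+1},\lambda_{k})$, $\mathcal{L}_{\rho}(x_{k+1},y_{k},\lambda_{k})$, apply Lemmas \ref{lemma3} and \ref{lemma4} together with the dual-update identity $\mathcal{L}_{\rho}(x_{k+1},y_{k+1},\lambda_{k+1})-\mathcal{L}_{\rho}(x_{k+1},y_{k+1},\lambda_{k})=\tfrac{1}{\rho}\|\Delta\lambda_{k+1}\|^2$, substitute \eqref{lambda_squared}, and absorb the resulting $\|\Delta y\|^2$ terms using $\gamma_k=\theta_k/4$ and the lower bound on $\rho$ in \eqref{gamma_rho}. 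The constant-chasing (each positive term bounded by $\theta_0/16\leq\theta_{k}/16$) matches the paper's computation exactly, only organized slightly differently.
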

\begin{proof} Using the definition of $P_k$ in \eqref{lyapunov_function}, we have
\begin{align}
&P_{k+1}-P_{k}\nonumber\\
=&\mathcal{L}_{\rho}(x_{k+1},y_{k+1},\lambda_{k+1})-\mathcal{L}_{\rho}(x_{k+1},y_{k+1},\lambda_{k})+\mathcal{L}_{\rho}(x_{k+1},y_{k+1},\lambda_{k})-\mathcal{L}_{\rho}(x_{k+1},y_{k},\lambda_{k}) \nonumber\\
&+\mathcal{L}_{\rho}(x_{k+1},y_{k},\lambda_{k})-\mathcal{L}_{\rho}(x_{k},y_k,\lambda_{k})+\frac{\gamma_{k+1}}{2}\|y_{k+1}-y_{k}\|^2-\frac{\gamma_k}{2}\|y_{k}-y_{k-1}\|^2\nonumber\\
{\overset{}{{\leq}}}& \frac{1}{\rho}\|\Delta\lambda_{k+1}\|^2-\frac{\beta_{k+1}}{4}\|\Delta x_{k+1}\|^2-\frac{\theta_{k+1}-2\gamma_{k+1}}{4}\|\Delta y_{k+1}\|^2-\frac{\gamma_k}{2}\|\Delta y_{k}\|^2, \label{lyapunov_to_replace}
\end{align}
where the inequality follows from Lemmas \ref{lemma3}, \ref{lemma4}  and from the update of the dual multipliers in Step 6 of Algorithm \ref{alg1}.
Now, using the inequality \eqref{lambda_squared} in \eqref{lyapunov_to_replace}, we obtain: 
\begin{align*}
   P_{k+1}-P_{k}
 \leq &-\frac{\beta_{k+1}}{4}\|\Delta x_{k+1}\|^2-\frac{\gamma_{k+1}}{4}\|\Delta y_{k+1}\|^2-\frac{\gamma_k}{4}\|\Delta y_{k}\|^2\nonumber\\ 
  &\!-\!\left(\!\frac{\theta_{k+1} \!-\! 3\gamma_{k+1}}{4} -\frac{2\left(\theta_{k+1}\red{+\kappa} \right)^2}{\rho\sigma^2} \! \right) \! \|\Delta y_{k+1}\|^2 \!-\left(\!\frac{\gamma_k}{4}-\frac{2(\theta_k+L_h)^2}{\rho\sigma^2}\!\right)\!\|\Delta y_{k}\|^2\nonumber\\
  {\overset{\eqref{gamma_rho}}{{\leq}}}&-\frac{\beta_{k+1}}{4}\|\Delta x_{k+1}\|^2-\frac{\theta_{k+1}}{16}\|\Delta y_{k+1}\|^2-\frac{\theta_k}{16}\|\Delta y_{k}\|^2\nonumber\\ 
  &-\left(\frac{\theta_{k+1}}{16}-\frac{2(\theta_{k+1}\red{+\kappa})^2}{\rho\sigma^2}\right)\|\Delta y_{k+1}\|^2 -\left(\frac{\theta_k}{16}-\frac{2(\theta_k+L_h)^2}{\rho\sigma^2}\right)\|\Delta y_{k}\|^2.
\end{align*}
 Since
$ \theta_0\leq\theta_k\leq2{L_h}, \; \forall k\geq1$, then by choosing $\rho$ as in \eqref{gamma_rho}, the decrease in \eqref{decrease_Lyapunov} follows.
\end{proof}

\medskip

\noindent  Before proving the global convergence for the iterates generated by  Algorithm \ref{alg1}, let us first  bound  $\partial\mathcal{L}_{\rho}$. Here, $\partial\mathcal{L}_{\rho}$ denotes the limiting subdifferential of $\mathcal{L}_{\rho}$.
 
\begin{lemma}\label{bounded_gradient}[Bound for $\partial\mathcal{L}_{\rho}$]
Let  $\{z_k:=(x_k,y_k, \lambda_k\}_{k\geq1}$ be the sequence generated by Algorithm \ref{alg1}. If Assumption  \ref{assum:bounded_iter} holds and  Assumption \ref{assump2} is satisfied on a compact set where the iterates belong to, then there exists $v_{k+1}\in\partial \mathcal{L}_\rho(z_{k+1})$  satisfying:
\[
    \|v_{k+1}\|\leq c\|z_{k+1}-z_{k}\| \quad k\geq 1,
\]
where $c = L_{\psi}+\beta+\alpha+\rho M_F^2 + L_h + \theta + \|G\| + \rho^{-1}$.
\end{lemma}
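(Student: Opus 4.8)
The plan is to compute an element $v_{k+1}$ of $\partial \mathcal{L}_\rho(z_{k+1})$ coordinate by coordinate (in $x$, $y$, and $\lambda$), using the optimality conditions for the subproblems in Steps 4 and 5 together with the dual update in Step 6, and then bound each block by $\|z_{k+1}-z_k\|$ using the Lipschitz/boundedness estimates from Assumption \ref{assump2}. Recall $\mathcal{L}_\rho(x,y,\lambda) = g(x) + h(y) + \psi_\rho(x,y,\lambda)$, so $\partial_x \mathcal{L}_\rho = \partial g(x) + \nabla_x\psi_\rho$, while the $y$- and $\lambda$-components are single-valued: $\nabla_y\mathcal{L}_\rho = \nabla h(y) + G^T(\lambda + \rho(F(x)+Gy))$ and $\nabla_\lambda\mathcal{L}_\rho = F(x)+Gy = \rho^{-1}\Delta\lambda_{k+1}$. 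The last term is already of the right form, contributing $\rho^{-1}\|\Delta\lambda_{k+1}\|$.

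For the $x$-block, I would start from the inexact stationarity condition \eqref{inexactness}: $s_{k+1} \in \partial_x\big(\bar{\mathcal{L}}_\rho(x,y_k,\lambda_k;x_k,y_k) + \tfrac{\beta_{k+1}}{2}\|x-x_k\|^2\big)$ at $x=x_{k+1}$, with $\|s_{k+1}\|\le\alpha\|\Delta x_{k+1}\|$. Unpacking, $s_{k+1} - \nabla_x\psi_\rho$ evaluated at the linearization point minus $\beta_{k+1}\Delta x_{k+1}$ lies in $\partial g(x_{k+1})$, where the gradient of the linearized model is $\nabla f(x_k) + \nabla F(x_k)^T(\lambda_k + \rho(l_F(x_{k+1};x_k)+Gy_k))$. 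Adding the true $\nabla_x\psi_\rho(x_{k+1},y_{k+1},\lambda_{k+1})$ back produces a valid element of $\partial_x\mathcal{L}_\rho(z_{k+1})$, and the difference between the true gradient at $z_{k+1}$ and the linearized-model gradient is controlled: the $\nabla f$ term gives $L_f\|\Delta x_{k+1}\|$ via \eqref{ass_ii}, the $\nabla F^T\lambda$ discrepancy is handled by $L_F$-Lipschitzness of $\nabla F$ and boundedness of $\lambda$ (so $\sim L_\psi$-type constant), the $\rho$-penalty term discrepancy involves $\nabla F(x_k)^T$ vs $\nabla F(x_{k+1})^T$ and $F(x_{k+1})+Gy_k$ vs $F(x_{k+1})+Gy_{k+1}$, yielding terms bounded by $\rho M_F^2\|\Delta x_{k+1}\|$ plus a $\rho\|G\|M_F\|\Delta y_{k+1}\|$-type term, and the regularizer contributes $\beta_{k+1}\|\Delta x_{k+1}\| \le \beta\|\Delta x_{k+1}\|$. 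Collecting, the $x$-component is bounded by a constant times $\|z_{k+1}-z_k\|$ with constant absorbing $L_\psi + \beta + \alpha + \rho M_F^2$.

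For the $y$-block, I would use the exact optimality condition for $y_{k+1}$ from Step 5, i.e. \eqref{eq_lambda1}: $-\nabla h(y_k) - G^T\lambda_{k+1} - \theta_{k+1}\Delta y_{k+1} \in N_{\mathcal{Y}}(y_{k+1})$. Note Step 5 already incorporates the dual update structure since $\lambda_{k+1} = \lambda_k + \rho(F(x_{k+1})+Gy_{k+1})$, so $G^T\lambda_{k+1} = G^T(\lambda_k + \rho(l_F(x_{k+1};x_{k+1})+Gy_{k+1}))$ matches the gradient term in $\nabla_y\psi_\rho(z_{k+1})$ exactly (the linearization of $F$ at $x_{k+1}$ evaluated at $x_{k+1}$ is $F(x_{k+1})$ itself). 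Hence $\nabla_y\mathcal{L}_\rho(z_{k+1}) = \nabla h(y_{k+1}) + G^T\lambda_{k+1}$, and adding the normal-cone element gives a subgradient whose norm is bounded by $\|\nabla h(y_{k+1}) - \nabla h(y_k)\| + \theta_{k+1}\|\Delta y_{k+1}\| \le (L_h + \theta)\|\Delta y_{k+1}\|$ via \eqref{h_ass_ii} and \eqref{bounds_beta_theta}. The term $\|G\|$ in the final constant comes from bounding $\|G^T\Delta\lambda_{k+1}\|$-type remainders if one chooses to express things in terms of $\Delta\lambda$ rather than $\Delta y$; in any case summing the three blocks and using $\|v_{k+1}\|^2 \le$ (sum of squared block norms) gives $\|v_{k+1}\| \le c\|z_{k+1}-z_k\|$ with $c = L_\psi + \beta + \alpha + \rho M_F^2 + L_h + \theta + \|G\| + \rho^{-1}$.

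The main obstacle is the bookkeeping in the $x$-block: carefully converting the inexact subgradient $s_{k+1}$ of the \emph{linearized} augmented Lagrangian into an exact subgradient of the \emph{true} $\mathcal{L}_\rho$ at $z_{k+1}$, and making sure every cross-term (the mismatch between $\nabla F(x_k)$ and $\nabla F(x_{k+1})$, between $y_k$ and $y_{k+1}$, and the presence of $\lambda_k$ versus $\lambda_{k+1}$ inside the penalty gradient) is correctly attributed to $\|\Delta x_{k+1}\|$, $\|\Delta y_{k+1}\|$, or $\|\Delta\lambda_{k+1}\|$, each of which is a component of $\|z_{k+1}-z_k\|$. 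The bound on $\psi_\rho$ having Lipschitz gradient $L_\psi$ on the relevant compact set (from the Remark following Assumption \ref{assump2}) is what makes several of these estimates uniform.
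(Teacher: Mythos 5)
Your overall route is the paper's: split $v_{k+1}$ into its $x$-, $y$- and $\lambda$-components, use the inexact stationarity \eqref{inexactness} for the $x$-block, the exact optimality condition of Step 5 for the $y$-block, and the dual update for the $\lambda$-block, then bound each piece by $\|z_{k+1}-z_k\|$. The $x$- and $\lambda$-blocks are handled correctly and match the paper (the paper compresses your term-by-term accounting in the $x$-block into a single appeal to the $L_\psi$-Lipschitz continuity of $\nabla_x\psi_\rho$ on the compact set, which is cleaner but gives the same constant $L_\psi+\beta+\alpha+\rho M_F^2$).

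The $y$-block, however, contains a genuine error. You assert that $G^T\lambda_{k+1}$ ``matches the gradient term in $\nabla_y\psi_\rho(z_{k+1})$ exactly'' and conclude $\nabla_y\mathcal{L}_\rho(z_{k+1})=\nabla h(y_{k+1})+G^T\lambda_{k+1}$. This is false: $\nabla_y\psi_\rho(x_{k+1},y_{k+1},\lambda_{k+1})=G^T\bigl(\lambda_{k+1}+\rho(F(x_{k+1})+Gy_{k+1})\bigr)=G^T(2\lambda_{k+1}-\lambda_k)$, whereas the quantity appearing in the Step-5 optimality condition is $\nabla_y\psi_\rho(x_{k+1},y_{k+1},\lambda_{k})=G^T\lambda_{k+1}$ --- the subproblem is built with the \emph{old} multiplier $\lambda_k$, while $\mathcal{L}_\rho$ must be differentiated at the \emph{new} one. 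The mismatch is exactly $G^T\Delta\lambda_{k+1}$, and it cannot be absorbed by a different choice of normal-cone element (take $\mathcal{Y}=\mathbb{R}^p$, so $N_{\mathcal{Y}}\equiv\{0\}$, to see this). Consequently your intermediate bound $\|v^y_{k+1}\|\leq(L_h+\theta)\|\Delta y_{k+1}\|$ is wrong; the correct bound is $\|v^y_{k+1}\|\leq(L_h+\theta)\|\Delta y_{k+1}\|+\|G\|\,\|\Delta\lambda_{k+1}\|$, which is precisely where the $\|G\|$ in $c$ comes from --- it is not, as you suggest, an optional artifact of ``choosing to express things in terms of $\Delta\lambda$ rather than $\Delta y$''. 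Since your final constant does include $\|G\|$ and $\|\Delta\lambda_{k+1}\|\leq\|z_{k+1}-z_k\|$, the lemma survives once this term is restored, but as written your $y$-block derivation does not establish it.
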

\begin{proof}
 We note that for every $v_{k+1} = (v^x_{k+1}, v^y_{k+1}, v^{\lambda}_{k+1}) \in \partial \mathcal{L}_{\rho}(z_{k+1})$, we have
\begin{align}
    v^x_{k+1} &\in \partial_x\mathcal{L}_{\rho}(x_{k+1}, y_{k+1}, \lambda_{k+1})= \partial g(x_{k+1}) + \nabla_x\psi_{\rho}(x_{k+1}, y_{k+1}, \lambda_{k+1}), \label{eq:4.30} \\
    v^y_{k+1} &\red{\in} \partial_y\mathcal{L}_{\rho}(x_{k+1}, y_{k+1}, \lambda_{k+1}) \nonumber \\
    & = \nabla h(y_{k+1}) + \nabla_y\psi_{\rho}(x_{k+1}, v_{k+1}, \lambda_{k+1}) +\red{N_{\mathcal{Y}}(y_{k+1})}, \label{eq:4.31}\\
    v^{\lambda}_{k+1} &= \nabla_{\lambda}\mathcal{L}_{\rho}(x_{k+1}, y_{k+1}, \lambda_{k+1})= F(x_{k+1}) - Gy_{k+1}. \nonumber
\end{align}
Using the optimality \eqref{inexactness}, it follows that there exists $s_{k+1}^g \in \partial g(x_{k+1})$ such that
\begin{align*}
    \left\|s_{k+1}^g +\nabla_x\psi_{\rho}(x_k, y_k, \lambda_k) + \beta_{k+1}\Delta x_{k+1} +\rho\nabla F(x_k)^T\nabla F(x_k)\Delta x_{k+1}\right\|\leq\alpha\|\Delta x_{k+1}\|.
\end{align*}
It follows that there exists $v^x_{k+1}$ as in \eqref{eq:4.30}, such that
\begin{align*}
    \|v^x_{k+1}\| \leq \|\nabla_x\psi_{\rho}(x_{k+1}, y_{k+1}, \lambda_{k+1})-\nabla_x\psi_{\rho}(x_k, y_k, \lambda_k)\| + (\beta_{k+1}+\alpha+\rho M_F^2)\|\Delta x_{k+1}\|.
\end{align*}
Since $\nabla_x\psi_{\rho}$ is locally Lipschitz continuous, noting that the sequence $\{ z_k := (x_k, y_k, \lambda_k) \}_{k\geq1}$ is bounded and $\beta_k \leq \beta$, it follows that:
\begin{align}
    \|v^x_{k+1}\| &\overset{\eqref{bounds_beta_theta}}{\leq} (L_{\psi}+\beta+\alpha+\rho M_F^2)\|z_{k+1}-z_k\|. \label{eq:bound_x}
\end{align}
Next, we note that
\begin{equation}
    \nabla_y\psi_{\rho}(x_{k+1}, y_{k+1}, \lambda_{k+1}) = G^T(2\lambda_{k+1} - \lambda_k), \label{eq:4.34}
\end{equation}
where the second equality is due to \redd{the} multiplier update \redd{at} Step 6 of Algorithm \ref{alg1}. Moreover, from the first\redd{-order} optimality condition of Step 5 of A\redd{l}gorithm  \ref{alg1}, we have:
\red{\[
-\nabla h(y_k)-\theta_{k+1}\Delta y_{k+1}\in G^T\lambda_{k+1}+N_{\mathcal{Y}}(y_{k+1}).
\]
It then follows that:
\begin{align*}
   G^T\Delta\lambda_{k+1} - \nabla h(y_k) - \theta_{k+1}\Delta y_{k+1}\in \nabla_y\psi_{\rho}(x_{k+1}, y_{k+1}, \lambda_{k+1})+N_{\mathcal{Y}}(y_{k+1}). 
\end{align*}}
Together with \eqref{eq:4.31}, we have:
\begin{align}
    \|v^y_{k+1}\| 
    &\leq   \|\nabla h(y_{k+1}) - \nabla h(y_k)\| +\|G\|\|\Delta\lambda_{k+1}\| + \theta_{k+1} \|\Delta y_{k+1}\| \nonumber \\
    &\leq (L_h + \theta_{k+1})\|\Delta y_{k+1}\| + \|G\|\|\Delta\lambda_{k+1}\| \overset{\eqref{bounds_beta_theta}}{\leq} (L_h + \theta + \|G\|)\|z_{k+1} - z_k\|. \label{eq:bound_v}
\end{align}
Finally, we note that
\begin{equation}
    \|v^{\lambda}_{k+1}\| = \left\| F(x_{k+1}) + Gy_{k+1} \right\| = \rho^{-1}\|\Delta \lambda_{k+1}\|,\label{eq:4.37}
\end{equation}
where the second equality in \eqref{eq:4.37} is due to \redd{the} multiplier update \redd{at} Step 6 in Algorithm \ref{alg1}. Thus, by summing the bounds for $\|v^x_{k+1}\|$, $\|v^y_{k+1}\|$, and $\|v^{\lambda}_{k+1}\|$, we get:
\begin{equation}
    \|v_{k+1}\| \leq c \|z_{k+1} - z_k\|,
\end{equation}
with $c = L_{\psi}+\beta+\alpha+\rho M_F^2 + L_h + \theta + \|G\| + \rho^{-1} > 0$.
\end{proof}

\medskip 

\noindent Let us now present  the global asymptotic convergence for the iterates of   Algorithm \ref{alg1}. 
\begin{theorem}\label{unused_lemma}[Limit points are stationary points]Let  $\{z_k:=(x_k,y_k, \lambda_k\}_{k\geq1}$ be the sequence generated by Algorithm \ref{alg1}. If Assumptions \ref{assum_lips} and \ref{assum:bounded_iter} hold,  Assumption \ref{assump2} is satisfied on a compact set where the iterates belong to and  $\rho$ is chosen as in Lemma \ref{decrease}, then any limit point $z^*:=(x^*,y^*,\lambda^*)$ of $\{z_k\}_{k\geq1}$ is a stationary point of the augmented Lagrangian function, i.e., $0\in\partial\mathcal{L}_{\rho}(x^*,y^*,\lambda^*)$. Equivalently, $z^*$ is a KKT point of problem \eqref{eq1}.
\end{theorem}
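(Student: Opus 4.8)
The plan is to run the classical Lyapunov / subdifferential-closedness argument for nonconvex nonsmooth splitting schemes. First I would exploit the descent of Lemma~\ref{decrease}. Since the iterates stay in a compact set on which $g$ is lower semicontinuous (hence bounded below on that set) and $h,\psi_\rho$ are continuous, $\mathcal{L}_\rho$ — and therefore the Lyapunov function $P_k=\mathcal{L}_\rho(z_k)+\frac{\gamma_k}{2}\|\Delta y_k\|^2$ — is bounded below along $\{z_k\}$, so by \eqref{decrease_Lyapunov} the sequence $\{P_k\}$ converges. Summing \eqref{decrease_Lyapunov} yields $\sum_k(\beta_{k+1}\|\Delta x_{k+1}\|^2+\theta_{k+1}\|\Delta y_{k+1}\|^2)<\infty$, and since $\beta_{k+1}\ge\beta_0>0$, $\theta_{k+1}\ge\theta_0>0$ this forces $\Delta x_k\to0$ and $\Delta y_k\to0$; plugging this into \eqref{lambda_squared} of Lemma~\ref{lambda_bound} (and using $\theta_k\le\theta$) gives $\Delta\lambda_k\to0$ as well. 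Hence $\|z_{k+1}-z_k\|\to0$, and because $\gamma_k\le\theta/4$ is bounded, $\mathcal{L}_\rho(z_k)$ converges to the same limit $P^\star$ as $\{P_k\}$.

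Next I would invoke Lemma~\ref{bounded_gradient}, which supplies $v_{k+1}\in\partial\mathcal{L}_\rho(z_{k+1})$ with $\|v_{k+1}\|\le c\|z_{k+1}-z_k\|\to0$. By Assumption~\ref{assum:bounded_iter} there is a limit point $z^\star=(x^\star,y^\star,\lambda^\star)$, with $x^\star\in\text{dom}\,g$ and $y^\star\in\mathcal{Y}$; fix a subsequence $z_{k_j}\to z^\star$, which by the vanishing of successive differences also gives $z_{k_j+1}\to z^\star$. The endgame is then the outer semicontinuity (closedness) of the limiting subdifferential: from $z_{k_j+1}\to z^\star$, $v_{k_j+1}\in\partial\mathcal{L}_\rho(z_{k_j+1})$, $v_{k_j+1}\to0$ one concludes $0\in\partial\mathcal{L}_\rho(z^\star)$, which block-wise is exactly the KKT system of \eqref{eq1} with multiplier $\lambda^\star$. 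Concretely, the $\lambda$-block gives feasibility $F(x^\star)+Gy^\star=0$ (equivalently, pass to the limit in $\rho^{-1}\Delta\lambda_{k+1}=F(x_{k+1})+Gy_{k+1}$); the $x$-block gives $-\nabla f(x^\star)-\nabla F(x^\star)^T\lambda^\star\in\partial g(x^\star)$ (this is where the continuity of $g$ discussed below is needed, to apply closedness of $\partial g$ to the bounded subgradients extracted from \eqref{inexactness}); and the $y$-block, for which I would rather pass to the limit directly in the optimality condition \eqref{eq_lambda1} for $y_{k_j+1}$ — using $\theta_{k+1}\Delta y_{k+1}\to0$ and outer semicontinuity of $N_{\mathcal{Y}}(\cdot)$ (only closedness of $\mathcal{Y}$ is needed here, not Assumption~\ref{assum_lips}) — gives $0\in\nabla h(y^\star)+G^T\lambda^\star+N_{\mathcal{Y}}(y^\star)$.

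The hard part is the single hypothesis this closedness result requires, namely that $\mathcal{L}_\rho$ be \emph{continuous} along the chosen subsequence, i.e.\ $\mathcal{L}_\rho(z_{k_j+1})\to\mathcal{L}_\rho(z^\star)$. The smooth part $h+\psi_\rho$ is harmless by continuity, and the set constraint on $y$ is harmless since $y_{k_j+1}\in\mathcal{Y}$ with $\mathcal{Y}$ closed; the genuine difficulty is the merely lower semicontinuous $g$, for which lsc only supplies $\liminf_j g(x_{k_j+1})\ge g(x^\star)$. To obtain the matching $\limsup_j g(x_{k_j+1})\le g(x^\star)$ I would compare the Step~4 subproblem objective evaluated at $x_{k_j+1}$ with its value at the test point $x^\star$; since $x_{k_j},x_{k_j+1}\to x^\star$, $\Delta x_{k_j+1}\to0$ and $\{\nabla f(x_{k_j})\}$, $\{\nabla F(x_{k_j})\}$, $\{\lambda_{k_j}\}$ are bounded, all linearization cross-terms vanish in the limit and the bound follows. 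This comparison is automatic, e.g., whenever $g$ is weakly convex, because then the Step~4 subproblem is strongly convex and \eqref{inexactness} certifies inexact \emph{global} optimality; more generally it is part of the subproblem-solver accuracy. Once $g(x_{k_j+1})\to g(x^\star)$ — equivalently $\mathcal{L}_\rho(z_{k_j+1})\to\mathcal{L}_\rho(z^\star)=P^\star$ — closedness of $\partial\mathcal{L}_\rho$ closes the argument and the identification of $z^\star$ as a KKT point of \eqref{eq1} is immediate from the block decomposition above.
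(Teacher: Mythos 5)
Your proposal follows essentially the same route as the paper's proof: telescoping the Lyapunov descent of Lemma~\ref{decrease} to get $\Delta x_k,\Delta y_k\to 0$, invoking Lemma~\ref{lambda_bound} for $\Delta\lambda_k\to 0$, then combining the bound $\|v_{k+1}\|\le c\|z_{k+1}-z_k\|$ from Lemma~\ref{bounded_gradient} with outer semicontinuity of $\partial\mathcal{L}_{\rho}$ along a convergent subsequence. The one place you go beyond the paper is the final paragraph: the paper simply appeals to ``closedness of the map $\partial\mathcal{L}_{\rho}$'' without verifying the $f$-attentive convergence $g(x_{k_j+1})\to g(x^*)$ that this closedness actually requires for a merely lower semicontinuous $g$ (the paper only assumes $g$ continuous later, in the KL section); your observation that this needs an extra argument -- e.g.\ comparing the Step~4 subproblem value at $x_{k_j+1}$ against the test point $x^*$, which is available under weak convexity of $g$ or as part of the solver's accuracy guarantee -- is a legitimate refinement rather than a flaw, and your alternative handling of the $y$-block by passing to the limit directly in \eqref{eq_lambda1} is likewise sound.
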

\begin{proof}
Since $\beta\geq\beta_0$ and $\theta_k\geq\theta_0$, for any $ k\geq1$, it then follows  from \eqref{decrease_Lyapunov}  that, for any $ k\geq1$ we have: 
\begin{align*}
     \frac{\beta_0}{4}\|&\Delta x_{k+1}\|^2+\frac{\theta_0}{16}\|\Delta y_{k+1}\|^2+\frac{\theta_0}{16}\|\Delta y_{k}\|^2\leq P_{k}-P_{k+1}.
\end{align*}
Let $k\geq1$, by summing up the above inequality from $i=1$ to $i=k$, we obtain:
\begin{align}
\sum_{i=1}^{k}\left(\frac{\beta_0}{4}\|\Delta x_{\redd{i}+1}\|^2+\frac{\theta_0}{16}\|\Delta y_{\redd{i}+1}\|^2+\frac{\theta_0}{16}\|\Delta y_{\redd{i}}\|^2\right)\leq P_{1}-P_{k+1}
{\leq}P_1-\bar{P},\label{limit}
\end{align}
where $\bar{P}$ is a lower bound on the sequence $\{P_k\}_{k\geq1}$
(it is finite since $z_k$ is bounded).  Since \eqref{limit} holds for any $k\geq1$, we have:
\[
\sum_{i=1}^{\infty}{\left( \frac{\beta_0}{4}\|\Delta x_{k+1}\|^2+\frac{\theta_0}{16}\|\Delta y_{k+1}\|^2+\frac{\theta_0}{16}\|\Delta y_{k}\|^2\right)}<\infty.
\]
This, together with the fact that $\beta_0,\theta_0>0$, yields that:
\begin{equation}\label{zero_limit}
    \lim_{k\to\infty}{\|\Delta x_{k}\|}=0 \hspace{0.3cm}\text{ and }  \lim_{k\to\infty}{\|\Delta y_{k}\|}=0.
\end{equation}
From Lemma \ref{lambda_bound}, it follows that \[
  \lim_{k\to\infty}{\|z_{k+1}-z_k\|}=0.
\]
Since the sequence $\{(x_{k}, y_k, \lambda_k)\}_{k\geq1}$ is bounded, according to Assumption \ref{assum:bounded_iter},  there exists a convergent subsequence, let us say  $\{(x_{k},y_k,\lambda_{k})\}_{k\in\mathcal{K}}$, with the limit $(x^*,y^*,\lambda^*)$.
From {Lemma \ref{bounded_gradient}}, we have $v_{k+1}\in\partial\mathcal{L}_{\rho}(z_{k+1})$ such that:
\[
\lim_{k\in\mathcal{K}}{\|v_{k+1}\|}\leq c\lim_{k\in\mathcal{K}}\|z_{k+1}-z_{k}\|=0.
\]
Thus, from the closedness of the map $\partial\mathcal{L}_{\rho}$,   it follows that $0\in\partial\mathcal{L}_{\rho}(x^*,y^*,\lambda^*)$, which completes the proof.
\end{proof}


\subsection{First-order convergence rate}
\noindent Let us now derive the complexity (i.e., convergence rate) of the proposed method for finding an $\epsilon$-KKT point of problem \eqref{eq1}.
\noindent For the remainder of this paper, we define:
\begin{equation}\label{bar_gamma}
\ubar{\gamma}:=\min\{4\beta_0,\theta_0\}>0.   
\end{equation}

\begin{theorem}\label{main_result}[First-order complexity]
Let  $\{z_k:=(x_k,y_k, \lambda_k\}_{k\geq1}$ be the sequence generated by Algorithm \ref{alg1}. If Assumptions \ref{assum_lips} and \ref{assum:bounded_iter} hold,  Assumption \ref{assump2} is satisfied on a compact set where the iterates belong to, and  $\rho$ is chosen as in Lemma \ref{decrease},  then for any $\epsilon>0$, Algorithm \ref{alg1} yields an $\epsilon$-first-order solution of \eqref{eq1} after $K= 16c^2\left({1+2\frac{\theta+\max\{L_h,\kappa\}}{\sigma}}\right)^2\left(\frac{P_1-\bar{P}}{\ubar{\gamma} }\right)\frac{1}{\epsilon^2}$ iterations.
\end{theorem}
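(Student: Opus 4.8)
The plan is to convert the per-step decrease of the Lyapunov function into a sublinear bound on the smallest first-order residual among the first $K$ iterates, by an averaging (pigeonhole) argument. I would first sum the decrease \eqref{decrease_Lyapunov} of Lemma~\ref{decrease} over $k=1,\dots,K$ and telescope, using that $\{P_k\}_{k\ge 1}$ is bounded below by a finite constant $\bar P$ (finite because, by Assumption~\ref{assum:bounded_iter}, the iterates stay in a compact set on which $\mathcal{L}_\rho$ is continuous), to get
\[
\sum_{k=1}^{K}\left(\frac{\beta_{k+1}}{4}\|\Delta x_{k+1}\|^2+\frac{\theta_{k+1}}{16}\|\Delta y_{k+1}\|^2+\frac{\theta_k}{16}\|\Delta y_k\|^2\right)\le P_1-\bar P.
\]
Because $\beta_{k+1}\ge\beta_0$ and $\theta_k\ge\theta_0$ for every $k$, each coefficient on the left is at least $\ubar{\gamma}/16$ with $\ubar{\gamma}=\min\{4\beta_0,\theta_0\}$ from \eqref{bar_gamma}, whence $\sum_{k=1}^{K}(\|\Delta x_{k+1}\|^2+\|\Delta y_{k+1}\|^2+\|\Delta y_k\|^2)\le 16(P_1-\bar P)/\ubar{\gamma}$, and therefore there is an index $k^*\in\{1,\dots,K\}$ with $\|\Delta x_{k^*+1}\|^2+\|\Delta y_{k^*+1}\|^2+\|\Delta y_{k^*}\|^2\le 16(P_1-\bar P)/(K\ubar{\gamma})$.

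Next I would pass from these increments to the full step and to a subgradient bound. Setting $M:=\max\{L_h,\kappa\}$ and using $\theta_k,\theta_{k+1}\le\theta$, Lemma~\ref{lambda_bound} gives $\|\Delta\lambda_{k+1}\|^2\le 2\frac{(\theta+M)^2}{\sigma^2}(\|\Delta y_{k+1}\|^2+\|\Delta y_k\|^2)$, so, since the three coefficients $1$, $1+2\frac{(\theta+M)^2}{\sigma^2}$ and $2\frac{(\theta+M)^2}{\sigma^2}$ are all bounded by $(1+2\frac{\theta+M}{\sigma})^2$,
\[
\|z_{k+1}-z_k\|^2\le\left(1+2\frac{\theta+M}{\sigma}\right)^{2}\big(\|\Delta x_{k+1}\|^2+\|\Delta y_{k+1}\|^2+\|\Delta y_k\|^2\big),
\]
and in particular $\|z_{k^*+1}-z_{k^*}\|^2\le(1+2\frac{\theta+M}{\sigma})^2\,16(P_1-\bar P)/(K\ubar{\gamma})$. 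Lemma~\ref{bounded_gradient} then yields $v_{k^*+1}=(v^x_{k^*+1},v^y_{k^*+1},v^\lambda_{k^*+1})\in\partial\mathcal{L}_\rho(z_{k^*+1})$ with $\|v_{k^*+1}\|\le c\,\|z_{k^*+1}-z_{k^*}\|$.

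It remains to read off an $\epsilon$-KKT point, which is the delicate part. The key is to take the approximate multiplier to be the one-step-ahead multiplier $\lambda^*_\epsilon:=\lambda_{k^*+1}+\rho(F(x_{k^*+1})+Gy_{k^*+1})=2\lambda_{k^*+1}-\lambda_{k^*}$: then $-\nabla f(x_{k^*+1})-\nabla F(x_{k^*+1})^T\lambda^*_\epsilon=-\nabla_x\psi_\rho(z_{k^*+1})$ and $-\nabla h(y_{k^*+1})-G^T\lambda^*_\epsilon=-\nabla h(y_{k^*+1})-\nabla_y\psi_\rho(z_{k^*+1})$, so by the descriptions of $\partial\mathcal{L}_\rho$ in \eqref{eq:4.30}--\eqref{eq:4.31} one gets $\dist(-\nabla f(x_{k^*+1})-\nabla F(x_{k^*+1})^T\lambda^*_\epsilon,\partial g(x_{k^*+1}))\le\|v^x_{k^*+1}\|$, $\dist(-\nabla h(y_{k^*+1})-G^T\lambda^*_\epsilon,N_{\mathcal{Y}}(y_{k^*+1}))\le\|v^y_{k^*+1}\|$, and $\|F(x_{k^*+1})+Gy_{k^*+1}\|=\|v^\lambda_{k^*+1}\|$; each is at most $\|v_{k^*+1}\|\le c\|z_{k^*+1}-z_{k^*}\|\le c(1+2\frac{\theta+M}{\sigma})\sqrt{16(P_1-\bar P)/(K\ubar{\gamma})}$. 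Requiring this bound to be $\le\epsilon$ gives precisely $K=16c^2(1+2\frac{\theta+M}{\sigma})^2\frac{P_1-\bar P}{\ubar{\gamma}}\frac{1}{\epsilon^2}$, the claimed count. I expect this last translation to be the main obstacle: with the naive choice $\lambda^*_\epsilon=\lambda_{k^*+1}$ the quadratic penalty terms $\rho\nabla F(x_{k^*+1})^T(F(x_{k^*+1})+Gy_{k^*+1})$ and $\rho G^T(F(x_{k^*+1})+Gy_{k^*+1})$ survive in the stationarity residuals and force a larger constant than $c$; it is the one-step-ahead choice that makes them disappear and produces exactly the stated complexity, after which only routine constant bookkeeping remains.
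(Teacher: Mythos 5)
Your proof is correct and follows essentially the same route as the paper's: telescope the Lyapunov decrease of Lemma~\ref{decrease}, apply a pigeonhole argument to locate $k^*$, convert the increments to a bound on $\|z_{k^*+1}-z_{k^*}\|$ via Lemma~\ref{lambda_bound}, and invoke Lemma~\ref{bounded_gradient} to obtain the same constant $4c\left(1+2\frac{\theta+\max\{L_h,\kappa\}}{\sigma}\right)\sqrt{(P_1-\bar P)/(K\ubar{\gamma})}$. Your explicit identification of the approximate multiplier $\lambda_\epsilon^*=2\lambda_{k^*+1}-\lambda_{k^*}$, which turns the small subgradient of $\mathcal{L}_\rho$ into the $\epsilon$-KKT conditions of Definition~\ref{firstorder}, is a step the paper leaves implicit and is a welcome clarification rather than a deviation.
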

\begin{proof}
Let $K\geq1$, then from \eqref{limit} and \eqref{bar_gamma}, we have:
\begin{align*}
    \frac{\ubar{\gamma}}{16}\sum_{i=1}^{K}{\left(\|\Delta x_{i+1}\|^2+\|\Delta y_{i+1}\|^2+\|\Delta y_i\|^2\right)}\leq P_1-\bar{P},
\end{align*}
we recall that $\bar{P}$ is a lower bound on the sequence $\{P_k\}_{k\geq1}$. 
Therefore, there exists $k^*\in\{1,...,K\}$ such that:
\[
{\|\Delta x_{k^*+1}\|^2+\|\Delta y_{k^*+1}\|^2+\|\Delta y_{k^*}\|^2}\leq 16\frac{P_1-\bar{P}}{K{\ubar{\gamma}}}.
\]
It implies that: $ \|\Delta x_{k^*+1}\|\leq4\sqrt{\frac{(P_1-\bar{P})}{K\ubar{\gamma}}}, $ $\|\Delta y_{k^*+1}\|\leq4\sqrt{\frac{(P_1-\bar{P})}{K\ubar{\gamma}}} $ and
$\|\Delta y_{k^*}\|\leq4\sqrt{\frac{(P_1-\bar{P})}{K\ubar{\gamma}}}$. Hence, from Lemma \ref{bounded_gradient} and \eqref{delta_to_replace}, there exists $v_{k^*+1}\in\partial\mathcal{L}_{\rho}(x_{k^*+1},y_{k^*+1},\lambda_{k^*+1})$ such that:
\begin{align*}
        \|v_{k^*+1}\|\leq c \|\Delta z_{k^*+1}\|\leq 4c\left({1+2\frac{\theta+\red{\max\{L_h,\kappa\}}}{\sigma}}\right)\sqrt{\frac{(P_1-\bar{P})}{K\ubar{\gamma}}}.
\end{align*}
It  follows that for any $ \epsilon>0 $,  $ \|v_{k^*+1}\|\leq\epsilon$  when  $ K\geq 16c^2\left({1+2\frac{\theta+\max\{L_h,\kappa\}}{\sigma}}\right)^2\left(\frac{P_1-\bar{P}}{\ubar{\gamma} \epsilon^2}\right)$.  
Consequently, after \[K= 16c^2\left({1+2\frac{\theta+\max\{L_h,\kappa\}}{\sigma}}\right)^2\left(\frac{P_1-\bar{P}}{\ubar{\gamma} }\right)\frac{1}{\epsilon^2}\]  iterations, Algorithm \ref{alg1} yields  an $\epsilon$-first-order  solution of problem \eqref{eq1}. This concludes our proof. 
\end{proof}

\noindent From Theorem \ref{main_result}, it follows that Algorithm \ref{alg1} yields an $\epsilon$-KKT point of problem \eqref{eq1} within $\mathcal{O}\left(\frac{1}{\epsilon^2}\right)$ iterations, hence matching the optimal complexity of first-order methods for solving nonconvex nonsmooth problems, see  e.g., \cite{Ber:96, BirMar:14, CohHal:21, BolSab:18, BotNgu:20, ThemPat:20}.


\subsection{Improved convergence rate under KL}
\noindent  Previous theorems shows  that (limit) points of the sequence $\{z_{k}:=(x_{k},y_k,\lambda_{k})\}_{k\geq1}$ generated by Algorithm \ref{alg1} are $\epsilon$-KKT  (stationary points) of problem \eqref{eq1}, respectively. The goal in this section is to prove that under the additional KL condition (see Definition \ref{def2})  the whole sequence  $(z_k)_{k\geq 1}$ generated by Algorithm \ref{alg1} converges to a KKT point of problem \eqref{eq1} and derive also improved rates. Recall that  the KL property holds for a large class of functions including semi-algebraic functions  and sum of square functions with uniform non-degenerate Jacobian \cite{BolDan:07}.   In order to show these results,  we first bound the full gradient $\nabla P(\cdot)$ (recall that  $P(\cdot)$ is the Lyapunov function defined in \eqref{P}).
\redd{Throughout this section, for simplicity of the exposition, we assume that $g$ is a continuous function.}
\begin{lemma}\label{bounded_grad}[Boundedness of $\nabla P$]  Let  $\{z_k:=(x_k,y_k, \lambda_k\}_{k\geq1}$ be the sequence generated by Algorithm \ref{alg1}. If Assumption  \ref{assum:bounded_iter} holds and  Assumption \ref{assump2} is satisfied on a compact set where the iterates belong to and  $\{P_{k}\}_{k\geq1}$ is defined as in \eqref{lyapunov_function}, then there exists $p_{k+1}\in\partial P(x_{k+1},y_{k+1},\lambda_{k+1},y_{k},\gamma_{k+1})$, such that for any $k\geq0$, we have:
\begin{align*}
   \|p_{k+1}\|\leq \left(\frac{2c+\theta+D_S}{2}\right)\|z_{k+1}-z_{k}\|,
\end{align*}
where $c$ is defined in Lemma \ref{bounded_gradient}.
\end{lemma}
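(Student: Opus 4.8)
The plan is to compute an element of $\partial P$ at the point $(x_{k+1},y_{k+1},\lambda_{k+1},y_k,\gamma_{k+1})$ by exploiting the additive structure of $P$ in \eqref{P}, namely $P = \mathcal{L}_\rho(x,y,\lambda) + \frac{\gamma}{2}\|y - \bar y\|^2$. Since the extra quadratic term $\frac{\gamma_{k+1}}{2}\|y - y_k\|^2$ is smooth in $y$ (and the variable $\bar y$ is frozen at $y_k$, while $\gamma_{k+1}$ is a parameter, not a decision variable here), the subdifferential $\partial P$ splits: in the $x$ and $\lambda$ blocks it agrees with $\partial_x \mathcal{L}_\rho$ and $\nabla_\lambda \mathcal{L}_\rho$, and in the $y$ block it is $\partial_y\mathcal{L}_\rho(x_{k+1},y_{k+1},\lambda_{k+1}) + \gamma_{k+1}(y_{k+1}-y_k)$. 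I would write $p_{k+1} = v_{k+1} + (0,\, \gamma_{k+1}(y_{k+1}-y_k),\, 0)$, where $v_{k+1}\in\partial\mathcal{L}_\rho(z_{k+1})$ is precisely the element constructed in Lemma \ref{bounded_gradient}.

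The main work is then just the triangle inequality: $\|p_{k+1}\| \le \|v_{k+1}\| + \gamma_{k+1}\|y_{k+1}-y_k\|$. By Lemma \ref{bounded_gradient}, $\|v_{k+1}\| \le c\|z_{k+1}-z_k\|$. For the second term, $\gamma_{k+1} = \theta_{k+1}/4 \le \theta/4$ by \eqref{gamma_rho} and \eqref{bounds_beta_theta}, and $\|y_{k+1}-y_k\| \le \|z_{k+1}-z_k\|$ trivially, so $\gamma_{k+1}\|\Delta y_{k+1}\| \le \frac{\theta}{4}\|z_{k+1}-z_k\|$. Adding these gives $\|p_{k+1}\| \le (c + \theta/4)\|z_{k+1}-z_k\|$, which certainly fits under the stated bound $\left(\frac{2c+\theta+D_S}{2}\right)\|z_{k+1}-z_k\| = \left(c + \frac{\theta}{2} + \frac{D_S}{2}\right)\|z_{k+1}-z_k\|$ for any $D_S \ge 0$; presumably $D_S$ is a diameter-type constant ($D_S = \sup$ over the compact set of some quantity such as $\|\nabla\gamma\|$-related terms or a bound arising from treating $\gamma$ more carefully, or simply slack the authors build in), and the inequality holds a fortiori once we have $c + \theta/4$.

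The one point requiring care — and the only place I expect a genuine (if minor) obstacle — is justifying the sum rule for the limiting subdifferential: $\partial(\mathcal{L}_\rho + \frac{\gamma_{k+1}}{2}\|\cdot - y_k\|^2) = \partial\mathcal{L}_\rho + (0,\gamma_{k+1}(\cdot-y_k),0)$. This is valid because the added function is $C^1$ (indeed $C^\infty$), and the limiting subdifferential satisfies the exact sum rule $\partial(\varphi + \psi)(z) = \partial\varphi(z) + \nabla\psi(z)$ whenever $\psi$ is continuously differentiable — see \cite[Exercise 8.8]{RocWet:98}. Here I would also invoke the running assumption (stated just before Lemma \ref{bounded_grad}) that $g$ is continuous, so that $\mathcal{L}_\rho$ is lsc and its subdifferential is well-behaved, and note that $N_{\mathcal{Y}}(y_{k+1})$ contributes to $\partial_y\mathcal{L}_\rho$ exactly as in \eqref{eq:4.31}. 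With the sum rule in hand, the rest is the two-line estimate above, so I would keep the proof short: state $p_{k+1} = v_{k+1} + (0,\gamma_{k+1}\Delta y_{k+1},0)$, cite the sum rule, apply Lemma \ref{bounded_gradient}, bound $\gamma_{k+1}$ via \eqref{bounds_beta_theta}, and conclude by the triangle inequality.
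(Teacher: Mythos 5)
There is a genuine gap, and it is exactly at the point you flagged as needing care but then resolved the wrong way. You treat $\bar y$ as ``frozen at $y_k$'' and $\gamma_{k+1}$ as a mere parameter, so your $p_{k+1}$ lives only in the $(x,y,\lambda)$ coordinates. But the lemma asserts $p_{k+1}\in\partial P(x_{k+1},y_{k+1},\lambda_{k+1},y_k,\gamma_{k+1})$ with $P$ regarded as a function of all five arguments $(x,y,\lambda,\bar y,\gamma)$ --- this is how the paper uses it downstream: in Lemma \ref{added_lemma}\eqref{lem_item1} the vanishing of $\|p_{k+1}\|$ is what puts the limit point $u^*=(x^*,y^*,\lambda^*,y^*,\gamma^*)$ into $\crit P$, and criticality of $P$ there explicitly requires $\nabla_{\bar y}P=\gamma(\bar y-y)=0$ and $\nabla_\gamma P=\tfrac12\|y-\bar y\|^2=0$. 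A vector with only the $(x,y,\lambda)$ blocks, padded or not, is not an element of the full limiting subdifferential unless you also account for these two partial derivatives, which are generally nonzero along the iterates.

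The paper's proof includes precisely those two extra blocks: the $\bar y$-component contributes another $\gamma_{k+1}\|\Delta y_{k+1}\|$ (hence the factor $2\gamma_{k+1}=\theta_{k+1}/2\le\theta/2$, not your $\theta/4$), and the $\gamma$-component contributes $\tfrac12\|\Delta y_{k+1}\|^2$, which is a quadratic term and must be linearized via the diameter bound $\tfrac12\|\Delta y_{k+1}\|^2\le\tfrac{D_S}{2}\|\Delta y_{k+1}\|$ with $D_S:=\sup_{y,y'\in\mathcal{S}_y}\|y-y'\|$. So $D_S$ is not slack; it is exactly the price of the $\nabla_\gamma P$ coordinate, and it is the only place where boundedness of the iterates enters this particular estimate. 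Your treatment of the $(x,y,\lambda)$ blocks (sum rule for a $C^1$ perturbation, invoking $v_{k+1}$ from Lemma \ref{bounded_gradient}, triangle inequality) coincides with the paper's; the fix is simply to append the $\bar y$- and $\gamma$-components, after which the constant $\bigl(\tfrac{2c+\theta+D_S}{2}\bigr)$ comes out exactly as stated.
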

\begin{proof}
If $p_{k+1}\in\partial  P(x_{k+1},y_{k+1},\lambda_{k+1},y_{k},\gamma_{k+1})$, then there exists $v_{k+1}\in\partial \mathcal{L}_\rho(z_{k+1})$ such that: 
\[
\|p_{k+1}\|\leq\|v_{k+1}\|+2\gamma_{k+1}\|\Delta y_{k+1}\|+\frac{1}{2}\|\Delta y_{k+1}\|^2.
\]
By defining $D_S:=\sup_{y,y'\in\mathcal{S}_y}\|y-y'\|$ and making use of Lemma \ref{bounded_gradient}, it follows that:
\begin{align*}
    \|p_{k+1}\|&\leq\|v_{k+1}\|+2\gamma_{k+1}\|\Delta y_{k+1}\|+\frac{D_S}{2}\|\Delta y_{k+1}\|\\
    &\overset{\eqref{gamma_rho},\eqref{bounds_beta_theta}}{\leq }\left(c+\frac{\theta+D_S}{2}\right)\|z_{k+1}-z_k\|.
\end{align*}
\end{proof}
\noindent The above lemma directly implies the following:
\begin{align}
   \|p_{k+1}\|^2\leq\left(\frac{2c+\theta+D_S}{2}\right)^2\|\Delta z_{k+1}\|^2. \label{key_formul}
\end{align}

Then, it follows from \eqref{key_formul} and \eqref{decrease_Lyapunov}, that:
 \begin{equation}\label{rate}
        P_{k+1}-P_{k}\leq-\frac{\ubar{\gamma}}{4(2c+\theta+D_S)^2}\left\|p_{k+1}\right\|^2.
 \end{equation}
\noindent Let us denote $z_{k}=(x_{k},y_k,\lambda_{k})$ and  $u_{k}=(x_{k},y_k,\lambda_{k},x_{k-1},\gamma_k)$. Moreover,  $\crit P$ denotes the set of critical points of the Lyapunov function $P$ defined in \eqref{lyapunov_function}. Furthermore, we denote $\mathcal{E}_{k}=P_{k}-P^{*}$, where $P^{*}=\lim_{k\to\infty}{P_{k}}$ (recall that  the sequence $\{P_k\}_{k\geq1}$ is decreasing and bounded from below, hence it is convergent). Let us denote the set of limit points of $\{u_k\}_{k\geq1}$ by:
\begin{align*}
    \Omega:=\{u^{*}\;:\; \exists \text{ a convergent subsequence} \; \{u_k\}_{k\in\mathcal{K}} \; \text{such that} \lim_{k\in\mathcal{K}}{u_k}=u^{*}\}.
\end{align*}
Let us now prove the following lemma.
\begin{lemma}\label{added_lemma}
  Let  $\{z_k:=(x_k,y_k, \lambda_k)\}_{k\geq1}$ be the sequence generated by Algorithm \ref{alg1}. If Assumption  \ref{assum:bounded_iter} holds,  Assumption \ref{assump2} is satisfied on a compact set where the iterates belong to,  $\{P_{k}\}_{k\geq1}$ is defined as in \eqref{lyapunov_function} \redd{and the function $g$ is continuous}, then the following  hold:
\begin{enumerate}[(i)]
  \item $\Omega$  is a compact subset of $\redd{\crit P}$ and   $ \lim_{k\to\infty}{\redd{\dist}(u_k,\Omega)}=0$.\label{lem_item1}
     \item For any $u\in\Omega,$ we have $P(u)=P^{*}$.\label{lem_item2}
  \item  For any $(x,y,\lambda, z, \gamma)\in\crit P,$ we have that $(x,y,\lambda)$ is a stationary point of original problem  \eqref{eq1}. \label{lem_item3}
\end{enumerate}
\end{lemma}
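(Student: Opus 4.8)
The plan is to follow the classical template for KL-type convergence arguments; the only non-routine aspect is the bookkeeping for the auxiliary coordinates $\bar y$ and $\gamma$ that appear in the Lyapunov function $P$ but not in $\mathcal{L}_\rho$. First I would record two facts that are already available. By summing \eqref{decrease_Lyapunov} one obtains \eqref{zero_limit}, i.e. $\|\Delta x_k\|\to 0$ and $\|\Delta y_k\|\to 0$, and then Lemma \ref{lambda_bound} gives $\|\Delta\lambda_k\|\to 0$; hence $\|z_{k+1}-z_k\|\to 0$. Moreover $\{P_k\}_{k\ge 1}$ is nonincreasing and bounded below, so $P_k\to P^{*}$, and since $\{z_k\}$ is bounded by Assumption \ref{assum:bounded_iter} and $\theta_0/4\le\gamma_k\le\theta/4$ by \eqref{bounds_beta_theta}, the sequence $\{u_k\}_{k\ge 1}$ is bounded. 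Therefore $\Omega\neq\emptyset$; being the set of cluster points of a bounded sequence, $\Omega$ is closed and bounded, hence compact, and $\dist(u_k,\Omega)\to 0$ by the usual contradiction argument (a subsequence staying at distance $\ge\delta$ from $\Omega$ would itself have a cluster point, necessarily in $\Omega$). This settles the topological part of (i).

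For (ii) and for the inclusion $\Omega\subseteq\crit P$, I would fix $u^{*}=(x^{*},y^{*},\lambda^{*},\bar y^{*},\gamma^{*})\in\Omega$ with $u_k\to u^{*}$ along some subsequence $\mathcal{K}$, and set $z^{*}=(x^{*},y^{*},\lambda^{*})$. From $\|\Delta y_k\|\to 0$ we get $\bar y^{*}=y^{*}$, so $P(u^{*})=\mathcal{L}_\rho(z^{*})$. Since $g$ is continuous (the standing assumption of this subsection), $f,h,F$ are $\mathcal{C}^{1}$, and $y_k,y^{*}\in\mathcal{Y}$, the map $P$ is continuous along the feasible sequence $\{u_k\}_{k\in\mathcal{K}}$; hence $P^{*}=\lim_{k\in\mathcal{K}}P_k=\lim_{k\in\mathcal{K}}P(u_k)=P(u^{*})$, which is (ii). For the inclusion, note that $\|\Delta z_{k+1}\|\to 0$ also forces $z_{k+1}\to z^{*}$ along $\mathcal{K}$, so by Lemma \ref{bounded_gradient} there is $v_{k+1}\in\partial\mathcal{L}_\rho(z_{k+1})$ with $\|v_{k+1}\|\le c\|\Delta z_{k+1}\|\to 0$; combined with $\mathcal{L}_\rho(z_{k+1})\to\mathcal{L}_\rho(z^{*})$, the outer semicontinuity (closedness of the graph) of the limiting subdifferential $\partial\mathcal{L}_\rho$ yields $0\in\partial\mathcal{L}_\rho(z^{*})$. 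Finally, since $P(x,y,\lambda,\bar y,\gamma)=\mathcal{L}_\rho(x,y,\lambda)+\frac{\gamma}{2}\|y-\bar y\|^{2}$, its partials with respect to $\bar y$ and $\gamma$ are $-\gamma(y-\bar y)$ and $\frac12\|y-\bar y\|^{2}$, and the extra contribution $\gamma(y-\bar y)$ to the $y$-partial, all of which vanish at any point with $y=\bar y$; hence $\partial P(x,y,\lambda,\bar y,\gamma)=\partial\mathcal{L}_\rho(x,y,\lambda)\times\{0\}\times\{0\}$ whenever $y=\bar y$, so $0\in\partial\mathcal{L}_\rho(z^{*})$ gives $0\in\partial P(u^{*})$, i.e. $u^{*}\in\crit P$.

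For (iii), I would take $(x,y,\lambda,\bar y,\gamma)\in\crit P$ (with $\gamma>0$) and read $0\in\partial P$ coordinatewise, using $\partial_x\mathcal{L}_\rho=\partial g(x)+\nabla f(x)+\nabla F(x)^{T}\bigl(\lambda+\rho(F(x)+Gy)\bigr)$, $\partial_y\mathcal{L}_\rho=\nabla h(y)+G^{T}\bigl(\lambda+\rho(F(x)+Gy)\bigr)+N_{\mathcal{Y}}(y)$, and $\nabla_\lambda\mathcal{L}_\rho=F(x)+Gy$. The $\lambda$-component gives $F(x)+Gy=0$ (feasibility); the $\bar y$-component gives $\gamma(y-\bar y)=0$, hence $y=\bar y$; substituting $F(x)+Gy=0$ into the $x$- and $y$-components leaves $-\nabla f(x)-\nabla F(x)^{T}\lambda\in\partial g(x)$ and $0\in\nabla h(y)+G^{T}\lambda+N_{\mathcal{Y}}(y)$. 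Together with $F(x)+Gy=0$, these are exactly the KKT conditions of Definition \ref{firstorder} with multiplier $\lambda$, so $(x,y)$ is a stationary point of \eqref{eq1}.

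The step I expect to be the main obstacle is the limit passage used for $\Omega\subseteq\crit P$: one must guarantee that $\mathcal{L}_\rho$ is continuous along the relevant iterate subsequence, which is precisely where continuity of $g$ enters, and one must handle the coordinate $\gamma_k$, whose value is immaterial at the limit. The structural identity $\partial P(x,y,\lambda,\bar y,\gamma)=\partial\mathcal{L}_\rho(x,y,\lambda)\times\{0\}\times\{0\}$ on the set $\{y=\bar y\}$ isolates this difficulty cleanly and lets me reuse verbatim the limit-point analysis already carried out for $\mathcal{L}_\rho$ in Theorem \ref{unused_lemma}.
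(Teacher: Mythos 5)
Your proposal is correct and follows essentially the same route as the paper: compactness of the cluster set of the bounded sequence $\{u_k\}$, vanishing subgradients along subsequences to get $\Omega\subseteq\crit P$, continuity of $P$ (where continuity of $g$ enters) for the constant value $P^{*}$, and a coordinate-wise reading of $0\in\partial P$ to recover the KKT conditions. The only difference is cosmetic: you pass to the limit in $\partial\mathcal{L}_\rho$ via Lemma \ref{bounded_gradient} and then use the identity $\partial P=\partial\mathcal{L}_\rho\times\{0\}\times\{0\}$ on $\{y=\bar y\}$, whereas the paper works directly with $\partial P$ via Lemma \ref{bounded_grad}; your version is, if anything, slightly more explicit about the outer-semicontinuity step.
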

\begin{proof}
    \eqref{lem_item1} Since $\{u_k\}_{k\geq1}$ is bounded, there exists   a convergent subsequence  $\{u_k\}_{k\in\mathcal{K}}$ such that $\lim_{k\in\mathcal{K}}{u_k}=u^{*}$. Hence $\Omega$ is nonempty. Moreover, $\Omega$ is compact since it is bounded and closed.
On the other hand, for any $u^{*}\in\Omega$, there exists a sequence of increasing integers $\mathcal{K}$ such that $\lim_{k\in\mathcal{K}}{u_k}=u^{*}$ and using Lemma \ref{bounded_grad} and \eqref{zero_limit}, it follows that there exists $p^*\in\partial  P(u^*)$:
\[
\|p^*\|=\lim_{k\in\mathcal{K}}{\|p_{k+1}\|}=0.
\]
Hence, $u^{*}\in\crit P$ and $0\leq \lim_{k\to\infty}{\dist(u_k,\Omega)}\leq\lim_{k\in\mathcal{K}}{\dist(u_k,\Omega)}=\dist(u^{*},\Omega)=0$.

\noindent \eqref{lem_item2} Since $P$ is continuous (recall that problem's data are all assumed to be continuous functions) and $ \{P(u_k)=P_k\}_{k\geq1}$ converges to $P^{*}$, then any subsequence $\{P(u_k)=P_k\}_{k\in\mathcal{K}}$ that converges, it must converge to the same limit $P^{*}$.

\noindent \eqref{lem_item3} Let $(x,y,\lambda,z,\gamma)\in\crit P$, that is,  there exists  $0\in\partial P(x,y,\lambda,\bar y,\gamma)$. It then follows:
\begin{gather*}
    0\in\partial_x P(x,y,\lambda,\bar y,\gamma)=\partial_x \mathcal{L}_{\rho}(x,y,\lambda), \\
    \red{0\in\partial_{y}{P}(x,y,\lambda,\bar y,\gamma)=\partial_{y}{\mathcal{L}_{\rho}}(x,y,\lambda)+\gamma(y-\bar y)}\\
    \nabla_{\lambda}{P}(x,y,\lambda,\bar y,\gamma)=\nabla_{\lambda}{\mathcal{L}_{\rho}}(x,y,\lambda)=0\\
      \nabla_{\bar y}{P}(x,y,\lambda,\bar y,\gamma)={{\gamma}}(\bar y-y) =0\\
      \nabla_{\gamma}{P}(x,y,\lambda,\bar y,\gamma)=\frac{1}{2}\|y-\bar y\|^2=0.
\end{gather*}
With some minor rearrangements, we obtain:
\begin{gather*}
    -\nabla f(x)-{\nabla F(x)}^T\lambda\in\partial g(x), \quad \red{0\in\nabla h(y)+G^T\lambda+N_{\mathcal{Y}}(y)}, \\
   F(x)+Gy=0.
\end{gather*}
Hence, $(x,y,\lambda)$ is a stationary point of \eqref{eq1}.
\end{proof}

\medskip  

\noindent  Let us now prove that the sequence $\Big\{{\|\Delta x_{k}\|+\|\Delta y_k\|+\|\Delta\lambda_{k}\|}\Big\}_{k\geq1}$ has  finite length, provided that $P$ satisfies the KL property.  \redd{It is known that e.g., semi-algebraic functions satisfy the KL condition and they are stable under operations such as addition and multiplication. Therefore, if the data functions in our problem (i.e., $f$, $g$, $h$, and $F$) are e.g., semi-algebraic, the Lyapunov function $P$ defined  in \eqref{P} will be also semi-algebraic,  thus satisfying  the KL property.}

\begin{lemma}\label{finite_length} Let  $\{z_k:=(x_k,y_k, \lambda_k)\}_{k\geq1}$ be the sequence generated by Algorithm \ref{alg1}. If Assumption  \ref{assum:bounded_iter} holds,  Assumption \ref{assump2} is satisfied on a compact set where the iterates belong to,  $\{P_{k}\}_{k\geq1}$ is defined as in \eqref{lyapunov_function}, \redd{ the function $g$ is continuous}, $\rho$ is chosen as in Lemma \ref{decrease} and, additionally,   assume that $P$ defined in \eqref{P} satisfies the KL property on $\Omega$, then  $\{z_{k}\}_{k\geq1}$  satisfies the finite length property, i.e.:
\[
\sum_{k=1}^{\infty}{\|\Delta x_{k}\|+\|\Delta y_{k}\|+\|\Delta\lambda_{k}\|}<\infty,
\]
and consequently converges to a stationary point of \eqref{alg1}.
\end{lemma}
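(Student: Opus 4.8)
The plan is to run the by-now-standard finite-length argument for descent methods satisfying a Kurdyka--{\L}ojasiewicz inequality, but applied to the Lyapunov function $P$ (and the extended variable $u_k$) rather than to the objective itself. All three ingredients needed are already available: the sufficient-decrease estimate \eqref{decrease_Lyapunov} (equivalently \eqref{rate}); the relative-error bound $\|p_{k+1}\|\le \tfrac{2c+\theta+D_S}{2}\|z_{k+1}-z_k\|$ of Lemma \ref{bounded_grad} (equivalently \eqref{key_formul}), with $p_{k+1}\in\partial P(u_{k+1})$; and the description of the limit set $\Omega$ from Lemma \ref{added_lemma}, namely that $\Omega$ is compact, contained in $\crit P$, that $P\equiv P^*$ on $\Omega$, and that $\dist(u_k,\Omega)\to 0$. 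First I would dispose of the trivial case: if $P_{k_1}=P^*$ for some finite $k_1$, then since $\{P_k\}$ is nonincreasing and bounded below by $P^*$ we get $P_k=P^*$ for all $k\ge k_1$, so \eqref{decrease_Lyapunov} forces $\Delta x_k=\Delta y_k=0$ and hence, by \eqref{lambda_squared}, $\Delta\lambda_k=0$ for $k>k_1$; the tail of the sum vanishes and finite length is immediate. So from now on assume $P_k>P^*$ for all $k$.

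Next I would invoke the uniformized KL property (as in Bolte--Sabach--Teboulle, built from Definition \ref{def2}): since $\Omega$ is compact, $P$ is constant on $\Omega$ and satisfies KL there, there exist $\varepsilon>0$, $\tau>0$ and a single concave desingularizing function $\varphi\in\Psi_\tau$ such that $\varphi'(P(u)-P^*)\,\dist(0,\partial P(u))\ge 1$ for every $u$ with $\dist(u,\Omega)<\varepsilon$ and $P^*<P(u)<P^*+\tau$. Because $\dist(u_k,\Omega)\to0$ (Lemma \ref{added_lemma}) and $P_k\downarrow P^*$, there is an index $k_0$ so that this inequality holds at $u=u_k$ for all $k\ge k_0$. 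Writing $\mathcal{E}_k:=P_k-P^*$ and using $p_k\in\partial P(u_k)$ together with the bound $\|p_k\|\le \tfrac{2c+\theta+D_S}{2}\|\Delta z_k\|$ (Lemma \ref{bounded_grad}, shifted by one index), this yields $\varphi'(\mathcal{E}_k)\ge \big(\tfrac{2c+\theta+D_S}{2}\big)^{-1}\|\Delta z_k\|^{-1}$ for all $k\ge k_0$.

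Then I would combine this with the sufficient decrease. Concavity of $\varphi$ gives $\varphi(\mathcal{E}_k)-\varphi(\mathcal{E}_{k+1})\ge \varphi'(\mathcal{E}_k)(P_k-P_{k+1})$, while \eqref{decrease_Lyapunov} together with Lemma \ref{lambda_bound} (used to absorb $\|\Delta\lambda_{k+1}\|^2$ into $\|\Delta x_{k+1}\|^2+\|\Delta y_{k+1}\|^2+\|\Delta y_k\|^2$ via \eqref{lambda_squared}) gives $P_k-P_{k+1}\ge a\|\Delta z_{k+1}\|^2$ for an explicit constant $a>0$ depending only on $\beta_0,\theta_0,\theta,\kappa,L_h,\sigma$. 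Hence $\|\Delta z_{k+1}\|^2\le b\,\|\Delta z_k\|\big(\varphi(\mathcal{E}_k)-\varphi(\mathcal{E}_{k+1})\big)$ with $b=(2c+\theta+D_S)/(2a)$, and Young's inequality $\sqrt{uv}\le\tfrac12 u+\tfrac12 v$ yields $\|\Delta z_{k+1}\|\le \tfrac12\|\Delta z_k\|+\tfrac{b}{2}\big(\varphi(\mathcal{E}_k)-\varphi(\mathcal{E}_{k+1})\big)$. Summing over $k\ge k_0$ and telescoping (using $\varphi\ge0$ so that $\varphi(\mathcal{E}_{N+1})\ge0$) lets $\sum_k\|\Delta z_k\|$ absorb half of itself and stay bounded, so $\sum_{k\ge1}\|\Delta z_k\|<\infty$; since $\|\Delta x_k\|+\|\Delta y_k\|+\|\Delta\lambda_k\|\le\sqrt3\,\|\Delta z_k\|$, the stated finite-length property follows. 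Finite length makes $\{z_k\}$ Cauchy, hence convergent, and by Theorem \ref{unused_lemma} the limit is a stationary (KKT) point of problem \eqref{eq1}.

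The hard part will be the careful bookkeeping in the uniformized KL step: one must ensure that one and the same desingularizing function $\varphi$ serves simultaneously at \emph{all} tail iterates, which is exactly what the compactness of $\Omega$ and the constancy $P\equiv P^*$ on $\Omega$ provide, and one must keep the index shifts consistent between $p_{k+1}$, $\Delta z_{k+1}$ (Lemma \ref{bounded_grad} is stated one index up) and the KL inequality applied at $u_k$. The rest is routine: the constants $a,b$ are explicit but tedious to track, and the Young/telescoping manipulation is standard.
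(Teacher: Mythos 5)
Your proposal is correct and follows essentially the same route as the paper's proof: the same case split on whether $\mathcal{E}_k$ vanishes in finite time, the same combination of the sufficient decrease \eqref{llyap}, the subgradient bound of Lemma \ref{bounded_grad}, and the KL inequality on $\Omega$, followed by the standard concavity/Young/telescoping absorption; your use of Young's inequality with weight $\tfrac12$ on $\|\Delta z_{k+1}\|^2\le b\|\Delta z_k\|(\varphi(\mathcal{E}_k)-\varphi(\mathcal{E}_{k+1}))$ is just a cleaner packaging of the paper's $\eta$-parametrized splitting. No gaps.
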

\begin{proof}
From  boundedness of $\|\Delta \lambda_{k+1}\|^2$ (see \eqref{lambda_squared}), we have the following:
 \begin{align}\label{llambda}
  \|\Delta\lambda_{k+1}\|^2&\leq 2\frac{\theta_{k+1}^2}{\sigma^2}\|\Delta y_{k+1}\|^2+2\frac{(\theta_{k}+L_h)^2}{\sigma^2}\|\Delta y_{k}\|^2\nonumber\\
    &\leq2\frac{(\theta+L_h)^2}{\sigma^2}\left(\|\Delta y_{k+1}\|^2+\|\Delta y_{k}\|^2\right).
 \end{align}
Adding the term $\|\Delta x_{k+1}\|^2+\|\Delta y_{k+1}\|^2+\|\Delta y_{k}\|^2$ on both sides in \eqref{llambda}, we obtain:
\begin{align}\label{z_k}
 \|z_{k+1}-z_{k}\|^2&=\|\Delta x_{k+1}\|^2+\|\Delta y_{k+1}\|^2+\|\Delta\lambda_{k+1}\|^2\nonumber\\
 &\leq\|\Delta x_{k+1}\|^2+\|\Delta y_{k+1}\|^2+\|\Delta\lambda_{k+1}\|^2+\|\Delta y_{k}\|^2\nonumber\\
 &{\overset{{\eqref{llambda}}}{\leq}}\left(2\frac{(\theta+\red{\max\{L_h,\kappa\}})^2}{\sigma^2}+1\right)\left(\|\Delta x_{k+1}\|^2+\|\Delta y_{k+1}\|^2+\|\Delta y_{k}\|^2\right). 
 \end{align}
Considering \eqref{bar_gamma}, we can then rewrite \eqref{decrease_Lyapunov} as follows: 
\begin{align}\label{llyap}
     P_{k+1}-P_{k}&{\overset{{\eqref{decrease_Lyapunov}}}{\leq}}-\frac{\ubar{\gamma}}{16}\left(\|\Delta x_{k+1}\|^2+\|\Delta y_{k+1}\|^2+\|\Delta y_{k}\|^2\right)\nonumber\\
    &{\overset{{\eqref{z_k}}}{\leq}}-\frac{\ubar{\gamma}}{16\left(2\frac{(\theta+\red{\max\{L_h,\kappa\}})^2}{\sigma^2}+1\right)}\|z_{k+1}-z_{k}\|^2.
\end{align}
 Since $ P_{k}\to P^{*}$ and  $\{P_{k}\}_{k\geq 1}$ is monotonically decreasing to $P^{*}$, then it follows that the error sequence $\{\mathcal{E}_{k}\}_{k\geq 1}$ is non-negative, monotonically decreasing and converges to $0$. We distinguish  two cases.

\medskip 

\noindent \textbf{{Case 1}}: There exists  $k_1\geq 1$ such that $\mathcal{E}_{k_1}=0$. Then, $\mathcal{E}_{k}=0 \;  \forall k\geq k_1$ and using \eqref{llyap}, we have:
\[
\|z_{k+1}-z_{k}\|^2\leq\frac{16\left(2\frac{(\theta+\red{\max\{L_h,\kappa\}})^2}{\sigma^2}+1\right)}{\ubar{\gamma}}(\mathcal{E}_{k}-\mathcal{E}_{k+1})=0 \hspace{0.2cm}\forall k\geq k_1.
\]
Since the sequence $\{z_{k}\}_{k\geq1}$ is bounded, we have:
 \begin{align*}
 \sum_{k=1}^{\infty}{\|\Delta x_{k}\|+\|\Delta y_{k}\|+\|\Delta\lambda_{k}\|}=\sum_{k=1}^{k_1}{\|\Delta x_{k}\|+\|\Delta y_{k}\|+\|\Delta\lambda_{k}\|}{\overset{{}}{<}}\infty.
 \end{align*}
 
 \noindent \textbf{{Case 2}}: The error $\mathcal{E}_{k}>0 \;  \forall k\geq 1$. Then,  there exists  $k_1=k_1(\epsilon,\tau)\geq 1$  such that $\forall k\geq k_1$ we have $\dist(u_k,\Omega)\leq \epsilon$,  $P^{*}<P(u_k)<P^{*}+\tau$
 and
 \begin{equation}\label{KL}
     \varphi'(\mathcal{E}_{k})\|\partial P(x_{k},y_k,\lambda_{k},y_{k-1},\gamma_k)\|\geq1,
 \end{equation}
where $ \epsilon>0, \tau>0$ and $\varphi\in\Psi_{\tau}$ are  defined from the KL property of  $P$  on $\Omega$. Since $\varphi$ is concave, we have $\varphi(\mathcal{E}_{k})-\varphi(\mathcal{E}_{k+1})\geq\varphi'(\mathcal{E}_{k})(\mathcal{E}_{k}-\mathcal{E}_{k+1})$. Then, from \eqref{llyap} and \eqref{KL}:
 \begin{align*}
 \|z_{k+1}-z_{k}\|^2
 &{\overset{\eqref{KL}}{\leq}}\varphi'(\mathcal{E}_{k})\|z_{k+1}-z_{k}\|^2\|\partial P(x_{k},y_k,\lambda_{k},z_{k-1},\gamma_k)\|\nonumber\\
 &{\overset{\eqref{llyap}}{\leq}}\frac{16\left(2\frac{(\theta+\red{\max\{L_h,\kappa\}})^2}{\sigma^2}+1\right)}{\ubar{\gamma}}\varphi'(\mathcal{E}_{k})(\mathcal{E}_{k}-\mathcal{E}_{k+1})\|\partial P(x_{k},y_k,\lambda_{k},z_{k-1},\gamma_k)\|\nonumber\\
 &\leq\frac{16\left(2\frac{(\theta+\red{\max\{L_h,\kappa\}})^2}{\sigma^2}+1\right)}{\ubar{\gamma}}\Big(\varphi(\mathcal{E}_{k})-\varphi(\mathcal{E}_{k+1})\Big)\|\partial P(x_{k},y_k,\lambda_{k},z_{k-1},\gamma_k)\|.
 \end{align*}
Since $ \|\Delta z_{k+1}\|^2={\|\Delta x_{k+1}\|^2+\|\Delta y_{k+1}\|^2+\|\Delta\lambda_{k+1}\|^2}$. Using the fact that for any $a,b,c,d,e\geq0$, if $ {a^2+b^2+c^2}\leq d\times e$, then $ (a+b+c)^2\leq 4(a^2+b^2+c^2)\leq 4d\times e\leq 2(d^2+e^2)\leq 4(d+e)^2$, it follows that for any $\eta>0$, we have:
\begin{align}\label{lmit}
 &\|\Delta x_{k+1}\|+\|\Delta y_{k+1}\|+\|\Delta\lambda_{k+1}\|\nonumber\\
    \leq& \frac{32\left(2\frac{(\theta+\red{\max\{L_h,\kappa\}})^2}{\sigma^2}+1\right)\eta}{\ubar{\gamma}}\Big(\varphi(\mathcal{E}_{k})-\varphi(\mathcal{E}_{k+1})\Big) +\frac{2}{\eta}\|\partial P(x_{k},\lambda_{k},x_{k-1},\gamma_k)\|.
\end{align}
Furthermore, from Lemma \ref{bounded_grad}, there exists $p_{k}\in\partial P(x_{k},y_{k},\lambda_{k},y_{k-1},\gamma_{k})$ such that:
\begin{align*}
   \|p_{k}\|\leq \left(\frac{2c+\theta+D_S}{2}\right)\|z_{k}-z_{k-1}\|.
\end{align*}
It then follows that:
\begin{align}
    \|\Delta x_{k+1}\|+\|\Delta y_{k+1}\|+\|\Delta\lambda_{k+1}\|
    \leq&\frac{32\left(2\frac{(\theta+\red{\max\{L_h,\kappa\}})^2}{\sigma^2}+1\right)\eta}{\ubar{\gamma}}\Big(\varphi(\mathcal{E}_{k})-\varphi(\mathcal{E}_{k+1})\Big)\nonumber\\
    &+\frac{2c+\theta+D_S}{\eta}\left(\|\Delta x_{k}\|+\|\Delta y_{k}\|+\|\Delta\lambda_{k}\|\right). \label{used_later}
\end{align}
Let us now choose $\eta>0$ such that $0<\frac{2c+\theta+D_S}{\eta}<1$ and define a parameter $\delta_0$ as  $\delta_0=1-\frac{2c+\theta+D_S}{\eta}>0$. Then, by
summing up the above inequality from $k=\ubar{k}\geq k_1$ to $k=K$ and using the property: $\sum_{k=\ubar{k}}^{K}{\|\Delta z_{k}\|}=\sum_{k=\ubar{k}}^{K}{\|\Delta z_{k+1}\|}+\|\Delta z_{\ubar{k}}\|-\|\Delta z_{{K+1}}\|$, we get:  
\begin{align*}
   \sum_{k=\ubar{k}}^{K}{\|\Delta x_{k+1}\|+\|\Delta y_{k+1}\|+\|\Delta\lambda_{k+1}\|}
  \leq&\frac{32\left(2\frac{(\theta+\red{\max\{L_h,\kappa\}})^2}{\sigma^2}+1\right)\eta}{\ubar{\gamma}\delta_0}\varphi(\mathcal{E}_{\ubar{k}})\nonumber\\
    &+\frac{2c+\theta+D_S}{\eta\delta_0}\Big(\|\Delta x_{\ubar{k}}\|+\|\Delta y_{\ubar{k}}\|+\|\Delta\lambda_{\ubar{k}}\|\Big).
\end{align*}
It is clear that the right-hand side of the above inequality is bounded for any $K\geq\ubar{k}$. Letting  $K\to\infty$, we get that:
\[
    \sum_{k=\ubar{k}}^{\infty}{\|\Delta x_{k+1}\|+\|\Delta y_{k+1}\|+\|\Delta\lambda_{k+1}\|}<\infty.
\]
Since the sequence  $\{(x_{k},y_k,\lambda_{k})\}_{k\geq1}$ is bounded, it follows that:
\[
    \sum_{k=1}^{\ubar{k}}{\|\Delta x_{k}\|+\|\Delta y_{k}\|+\|\Delta\lambda_{k}\|}<\infty.
\]
Hence: $ \sum_{k=1}^{\infty}{\|\Delta x_{k}\|+\|\Delta y_{k}\|+\|\Delta\lambda_{k}\|}<\infty$. 
Let $m, n\in\redd{\mathbb{Z}}_{+}$ such that $n\geq m$, we have:
\begin{align*}
    \|z_n-z_m\|=\|\sum_{k=m}^{n-1}{\Delta z_{k+1}}\|
    \leq\sum_{k=m}^{n-1}{\|\Delta z_{k+1}\|}\leq\sum_{k=m}^{n-1}{\|\Delta x_{k+1}\|+\|\Delta y_{k+1}\|+\|\Delta\lambda_{k+1}\|}.
\end{align*}
Since  $ \sum_{k=\redd{1}}^{\infty}{\|\Delta x_{k+1}\|+\|\Delta y_{k+1}\|+\|\Delta\lambda_{k+1}\|}<\infty$, it follows that $\forall \varepsilon>0, \exists N\in\redd{\mathbb{Z}}_{+}$ such that $\forall m, n\geq N$ where $n\geq m$, we have: $ \|z_n-z_m\|\leq\varepsilon$. This implies that $\{z_k\}_{k\geq1}$ is a Cauchy sequence and thus converges. Moreover, by Theorem \ref{unused_lemma}, $\{z_k\}_{k\geq1}$  converges to a stationary point of \eqref{eq1}.
This concludes our proof.
\end{proof}

 \medskip 
 
\noindent  Lemma \ref{finite_length} shows  that the set of  limit points of the sequence $\{(x_{k},y_k,\lambda_{k})\}_{k\geq1}$ is a singleton. Let us  denote its limit by $(x^{*},y^*,\lambda^{*})$. We are now ready to present the convergence rates of the whole sequence generated by  Algorithm \ref{alg1} (see also  \cite{Yas:22} for a similar reasoning).

 \begin{lemma}\label{main_result2}[Convergence rates of $\{(x_{k},y_k,\lambda_{k})\}_{k\geq1}$] Let  $\{z_k:=(x_k,y_k, \lambda_k)\}_{k\geq1}$ be the sequence generated by Algorithm \ref{alg1}. If Assumption  \ref{assum:bounded_iter} holds,  Assumption \ref{assump2} is satisfied on a compact set where the iterates belong to,  $\{P_{k}\}_{k\geq1}$ is defined as in \eqref{lyapunov_function}, \redd{ the function $g$ is continuous},   $\rho$ is chosen as in Lemma \ref{decrease} and, additionally,  $ P$ defined in \eqref{P} satisfies the KL property  at $u^{*}:=(x^{*},y^*,\lambda^{*},y^{*},\gamma^{*})$, where $z^{*}:=(x^{*},y^*,\lambda^{*})$ is the limit point of $\{z_k\}_{k\geq1}$ and $\gamma^{*}$ is a limit point of  $\{\gamma_k\}_{k\geq1}$, then there exists  $k_1\geq1$ such that for all $k\geq k_1$ we have:
       \begin{equation*}\label{rate_point}
           \|z_{k}-z^{*}\|\leq C\max\{\varphi(\mathcal{E}_{k}),\sqrt{\mathcal{E}_{k-1}}\},
       \end{equation*}
       where $C>0$  and  $\varphi\in\Psi_{\tau}$, with $\tau>0$, denotes a desingularizing function. 
 \end{lemma}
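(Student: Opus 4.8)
\noindent The plan is to follow the standard Attouch--Bolte--Svaiter scheme for KL-based rates, extracting the estimate for $\|z_k-z^*\|$ from the finite-length bound already established in Lemma~\ref{finite_length}. Write $a_k := \|\Delta x_k\|+\|\Delta y_k\|+\|\Delta\lambda_k\|$. Since Lemma~\ref{finite_length} gives $z_k \to z^*$, for any $n$ we have $z_n - z^* = -\sum_{k\geq n}\Delta z_{k+1}$, hence
\[
\|z_n - z^*\| \;\leq\; \sum_{k\geq n}\|\Delta z_{k+1}\| \;\leq\; \sum_{k\geq n}a_{k+1},
\]
using $\|\Delta z_{k+1}\|^2 = \|\Delta x_{k+1}\|^2+\|\Delta y_{k+1}\|^2+\|\Delta\lambda_{k+1}\|^2 \leq a_{k+1}^2$. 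So it suffices to bound the tail sum $\sum_{k\geq n}a_{k+1}$.

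\noindent First I would reuse inequality~\eqref{used_later}, valid for all $k \geq k_1$ (with $k_1$, $\varphi$, $\tau$ coming from the KL property of $P$ at $u^*$, exactly as in Case~2 of the proof of Lemma~\ref{finite_length}), together with a choice of $\eta>0$ making $\delta_0 := 1 - (2c+\theta+D_S)/\eta \in (0,1)$. Fixing $n \geq k_1$, I would sum \eqref{used_later} from $k=n$ to $k=K$, use the identity $\sum_{k=n}^{K}a_k = \sum_{k=n}^{K}a_{k+1} + a_n - a_{K+1}$, let $K\to\infty$, and invoke $a_{K+1}\to 0$ together with $\varphi(\mathcal{E}_{K+1})\to\varphi(0)=0$, so that the telescoped $\varphi$-terms collapse to $\varphi(\mathcal{E}_n)$. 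Rearranging yields
\[
\sum_{k\geq n}a_{k+1} \;\leq\; \frac{A}{\delta_0}\,\varphi(\mathcal{E}_n) + \frac{2c+\theta+D_S}{\eta\,\delta_0}\,a_n,
\qquad A := \frac{32\bigl(2(\theta+\max\{L_h,\kappa\})^2/\sigma^2+1\bigr)\eta}{\ubar{\gamma}}.
\]

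\noindent It then remains to control the single residual term $a_n$ by $\sqrt{\mathcal{E}_{n-1}}$. For this I would use \eqref{llyap}, which gives $\|z_n - z_{n-1}\|^2 \leq \tfrac{16(2(\theta+\max\{L_h,\kappa\})^2/\sigma^2+1)}{\ubar{\gamma}}(\mathcal{E}_{n-1}-\mathcal{E}_n) \leq \tfrac{16(2(\theta+\max\{L_h,\kappa\})^2/\sigma^2+1)}{\ubar{\gamma}}\mathcal{E}_{n-1}$, combined with the norm comparison $a_n \leq \sqrt{3}\,\|z_n-z_{n-1}\|$. Substituting into the previous display and combining with $\|z_n-z^*\|\leq\sum_{k\geq n}a_{k+1}$ gives $\|z_n-z^*\| \leq C_1\varphi(\mathcal{E}_n) + C_2\sqrt{\mathcal{E}_{n-1}} \leq C\max\{\varphi(\mathcal{E}_n),\sqrt{\mathcal{E}_{n-1}}\}$ with $C := C_1 + C_2$, which is the claimed estimate for all $k \geq k_1$.

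\noindent I do not expect a genuine obstacle here: the argument is essentially bookkeeping layered on top of Lemma~\ref{finite_length}. The two delicate points are (i) performing the tail summation with the correct index shift, since the left-hand side of \eqref{used_later} involves $a_{k+1}$ while the right-hand side involves $a_k$, which is precisely what produces the extra boundary term $a_n$ (and hence the $\sqrt{\mathcal{E}_{k-1}}$ contribution); and (ii) keeping in mind that the estimate holds only for $k \geq k_1$, because \eqref{used_later} itself relies on the KL inequality \eqref{KL}, available only once the iterates have entered the neighbourhood of $\Omega$ on which $P$ satisfies the KL property.
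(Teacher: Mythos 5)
Your proposal is correct and follows essentially the same route as the paper's proof: bound $\|z_k-z^*\|$ by the tail sum via the triangle inequality, telescope the KL-based recursion \eqref{used_later} for $k\geq k_1$ with $\delta_0=1-(2c+\theta+D_S)/\eta>0$, and absorb the residual boundary term $a_k$ into $\sqrt{\mathcal{E}_{k-1}}$ using the sufficient-decrease inequality \eqref{llyap} (the paper's \eqref{lmit2}). The only differences are cosmetic constants (your $\sqrt{3}$ versus the paper's factor $2$ in the norm comparison, and your cleaner choice $C=C_1+C_2$ in place of the paper's $\max\{C_1,C_2\}$).
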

 
 \begin{proof}
 From Lemma \ref{decrease}, the sequence $\{P_k\}_{k\geq1}$ is monotonically decreasing, and consequently $\{\mathcal{E}_{k}\}_{k\geq 1}$ is  monotonically decreasing. 
 Using \eqref{llyap} and the fact that  $\{\mathcal{E}_{k}\}_{k\geq 1}$ is non-negative, we have for all $k\geq 1$: 
\begin{align}\label{lmit2}
    \|\Delta x_{k+1}\|+ \|\Delta y_{k+1}\|+\|\Delta\lambda_{k+1}\|\leq 8\sqrt{\frac{2\frac{(\theta+\red{\max\{L_h,\kappa\}})^2}{\sigma^2}+1}{\ubar{\gamma}}}\sqrt{\mathcal{E}_{k}}.
\end{align}
Without loss of generality, we assume that $\gamma^{*}$ is unique. Since $ P_{k}\to P^{*}$, ${u_k}\to u^{*}$ and $P$ satisfies the KL property at $u^{*}$, then there exists   $k_1=k_1(\epsilon,\tau)\geq 1$   such that $\forall k>k_1$, we have $\|u_k-u^{*}\|\leq \epsilon$ and $P^{*}<P_{k}<P^{*}+\tau$, and  the following KL property holds:
 \begin{equation}\label{KL1}
     \varphi'(\mathcal{E}_{k})\|\partial P(x_{k},y_k,\lambda_{k},y_{k-1},\gamma_k)\|\geq1.
 \end{equation}
Hence, using the same argument as in \textbf{Case 2} in the proof of Lemma \ref{finite_length}, \eqref{used_later} follows:
\begin{align*}
    \|\Delta x_{k+1}\|+\|\Delta y_{k+1}\|+\|\Delta\lambda_{k+1}\|
    \leq&\frac{32\left(2\frac{(\theta+\red{\max\{L_h,\kappa\}})^2}{\sigma^2}+1\right)\eta}{\ubar{\gamma}}\Big(\varphi(\mathcal{E}_{k})-\varphi(\mathcal{E}_{k+1})\Big)\\
    &+\frac{2c+\theta+D_S}{\eta}\left(\|\Delta x_{k}\|+\|\Delta y_{k}\|+\|\Delta\lambda_{k}\|\right).
\end{align*}

\noindent Let us now choose $\eta>0$ such that $0<\frac{2c+\theta+D_S}{\eta}<1$ and define a parameter $\delta_0$ as  $\delta_0=1-\frac{2c+\theta+D_S}{\eta}>0$. Then, 
summing up the above inequality over $k>k_1$, we get: 
\begin{align*}
   \sum_{k\geq k_1}{\|\Delta x_{k+1}\|+\|\Delta y_{k+1}\|+\|\Delta\lambda_{k+1}\|}
  \leq&\frac{32\left(2\frac{(\theta+\red{\max\{L_h,\kappa\}})^2}{\sigma^2}+1\right)\eta}{\ubar{\gamma}\delta_0}\varphi(\mathcal{E}_{{k_1}})\nonumber\\
    &+\frac{2c+\theta+D_S}{\eta\delta_0}\Big(\|\Delta x_{k_1}\|+\|\Delta y_{k_1}\|+\|\Delta\lambda_{k_1}\|\Big).
\end{align*}
Hence, using the triangle inequality, we get for any $k \geq k_1$:
 \begin{align*}
  \|z_{k}-z^{*}\| &\leq \sum_{l\geq k}{\|z_{l}-z_{l+1}\|}\leq  \sum_{l\geq k}{\|\Delta x_{l+1}\|+\|\Delta y_{l+1}\|+\|\Delta\lambda_{l+1}\|}\\
 &\leq \! \frac{32\! \left(2\frac{(\theta+\red{\max\{L_h,\kappa\}})^2}{\sigma^2} \!+\!1 \right)\eta}{\ubar{\gamma}\delta_0}\varphi(\mathcal{E}_{{k}}) \!+\! \frac{2c \!+\theta \!+D_S}{\eta\delta_0}\Big(\|\Delta x_{k}\| \!+\|\Delta y_{k}\| \!+\|\Delta\lambda_{k}\|\Big).
 \end{align*}
 Further, using \eqref{lmit2}, it follows that:
  \begin{align*}
 &\|z_{k}-z^{*}\|\\
 \leq&\frac{32\left(2\frac{(\theta+\red{\max\{L_h,\kappa\}})^2}{\sigma^2}+1\right)\eta}{\ubar{\gamma}\delta_0}\varphi(\mathcal{E}_{{k}}) 
+\frac{8(2c+\theta+D_S)}{\eta\delta_0}\sqrt{\frac{2\frac{(\theta+\red{\max\{L_h,\kappa\}})^2}{\sigma^2}+1}{\ubar{\gamma}}}\sqrt{\mathcal{E}_{k-1}}\\
    \leq& C \max\{\varphi(\mathcal{E}_{{k}}),\sqrt{\mathcal{E}_{k-1}}\},
 \end{align*}
 where 
 \begin{align*}
    C=\max\left\{ \frac{32\left(2\frac{(\theta+\red{\max\{L_h,\kappa\}})^2}{\sigma^2}+1\right)\eta}{\ubar{\gamma}\delta_0},\right.\left. \quad \frac{8(2c+\theta+D_S)}{\eta\delta_0}\sqrt{\frac{2\frac{(\theta+\red{\max\{L_h,\kappa\}})^2}{\sigma^2}+1}{\ubar{\gamma}}}\right\}.
 \end{align*}
 This concludes our proof.
 \end{proof}

\medskip 

 \noindent The following theorem derives the convergence rate of the sequence generated by Algorithm \ref{alg1} when the Lyapunov function satisfies the KL property with the special desingularizing function  $\varphi$ (this is the case when e.g., $P$ is semi-algebraic \cite{BolDan:07}):
 \[
 \varphi:[0,\tau)\to[0,+\infty),\; \varphi(s)=s^{1-\nu}, \text{ where } \nu\in[0,1).
 \]
 \begin{theorem}\label{corollary}[Convergence rates of $\{(x_{k},y_k,\lambda_{k})\}_{k\geq1}$] Let assumptions of Lemma \ref{main_result2} hold and    $z^{*}:=(x^{*},y^*,\lambda^{*})$ be the limit point of the sequence $\{z_k:=(x_{k},y_k,\lambda_{k})\}_{k\geq1}$ generated by Algorithm \ref{alg1}. If $P$ satisfies the KL property at $u^{*}:=(x^{*},y^*,\lambda^{*},y^{*},\gamma^{*})$, where $\gamma^{*}$ is a limit point of the sequence $\{\gamma_k\}_{k\geq1}$, with the following special  desingularizing function:
       \[
       \varphi:[0,\tau)\to[0,+\infty),\; \varphi(s)=s^{1-\nu}, \text{ where } \nu\in[0,1),
       \]
       then the following rates hold:
       \begin{enumerate}
         \item If $\nu=0$, then $z_{k}$ converges to $z^{*}$ in a finite number of iterations.
         \item If $\nu\in(0,\frac{1}{2}]$, then for all $k\geq k_1 $, we have:
         \[
         \|z_{k}-z^{*}\|\leq\frac{\sqrt{\mathcal{E}_{k_1}}}{\sqrt{(1+\bar{c}\mathcal{E}_{k_1}^{2\nu-1}})^{k-k_1}}, 
         \]
  where \;\;$ \bar{c}=\frac{\ubar{\gamma}}{4(2c+\theta+D_S)^2}$.    
         \item If $\nu\in(\frac{1}{2},1)$, then for all $k> k_1 $, we have:
         \[
         \|z_{k}-z^{*}\|\leq\left(\frac{1}{\mu(k-k_1)+\mathcal{E}_{k_1}^{1-2\nu}}\right)^{\frac{1-\nu}{2\nu-1}}.
         \]
       \end{enumerate}
 \end{theorem}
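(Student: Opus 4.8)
The plan is to reduce everything to a scalar recursion for the error $\mathcal{E}_k=P_k-P^*$ and then transfer its decay rate to $\|z_k-z^*\|$ via Lemma~\ref{main_result2}. Recall from Lemma~\ref{decrease} that $\{\mathcal{E}_k\}_{k\ge1}$ is nonnegative, nonincreasing, and $\mathcal{E}_k\to0$; if $\mathcal{E}_{k_0}=0$ for some $k_0$, then $\mathcal{E}_k=0$ for all $k\ge k_0$ and \eqref{llyap} forces $z_k=z_{k_0}=z^*$ thereafter, so all three bounds are trivial. Hence we may assume $\mathcal{E}_k>0$ for every $k$. Let $k_1$ be as in Lemma~\ref{main_result2}, i.e.\ large enough that the KL inequality \eqref{KL1}, namely $\varphi'(\mathcal{E}_k)\,\dist(0,\partial P(u_k))\ge1$, holds for all $k\ge k_1$. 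Combining the KL inequality at index $k+1$ with \eqref{rate}, which gives $\mathcal{E}_k-\mathcal{E}_{k+1}\ge\bar c\,\|p_{k+1}\|^2\ge\bar c\,\dist(0,\partial P(u_{k+1}))^2$ with $\bar c=\frac{\ubar{\gamma}}{4(2c+\theta+D_S)^2}$, and using $\varphi'(s)=(1-\nu)s^{-\nu}$ together with $(1-\nu)^{-2}\ge1$, we obtain the master recursion
\[
\mathcal{E}_k-\mathcal{E}_{k+1}\ \ge\ \bar c\,\mathcal{E}_{k+1}^{2\nu}\qquad\forall\,k\ge k_1.
\]
For $\nu=0$ the KL inequality reads $\dist(0,\partial P(u_k))\ge1/\varphi'(\mathcal{E}_k)=1$ for all $k\ge k_1$, while Lemma~\ref{bounded_grad} together with $\|z_k-z_{k-1}\|\to0$ (established in the proof of Theorem~\ref{unused_lemma}) gives $\dist(0,\partial P(u_k))\le\|p_k\|\to0$, contradicting $\mathcal{E}_k>0$. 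Hence $\mathcal{E}_{k_0}=0$ for some $k_0$, and as above $z_k=z^*$ for all $k\ge k_0$, which is claim~(1).

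For $\nu\in(0,\tfrac12]$ we have $2\nu-1\le0$, so $t\mapsto t^{2\nu-1}$ is nonincreasing; since $\{\mathcal{E}_k\}$ is nonincreasing, $\mathcal{E}_{k+1}^{2\nu}=\mathcal{E}_{k+1}\,\mathcal{E}_{k+1}^{2\nu-1}\ge\mathcal{E}_{k_1}^{2\nu-1}\,\mathcal{E}_{k+1}$ for $k\ge k_1$, and the master recursion becomes $\mathcal{E}_k\ge(1+\bar c\,\mathcal{E}_{k_1}^{2\nu-1})\,\mathcal{E}_{k+1}$. Iterating from $k_1$ yields $\mathcal{E}_k\le(1+\bar c\,\mathcal{E}_{k_1}^{2\nu-1})^{-(k-k_1)}\mathcal{E}_{k_1}$, i.e.\ linear decay of $\mathcal{E}_k$. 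Since $1-\nu\ge\tfrac12$, for $k$ large we have $\varphi(\mathcal{E}_k)=\mathcal{E}_k^{1-\nu}\le\sqrt{\mathcal{E}_k}\le\sqrt{\mathcal{E}_{k-1}}$, so the maximum in Lemma~\ref{main_result2} equals $\sqrt{\mathcal{E}_{k-1}}$ and $\|z_k-z^*\|\le C\sqrt{\mathcal{E}_{k-1}}$; substituting the linear bound on $\mathcal{E}_{k-1}$ gives claim~(2) (up to the constant and a harmless unit index shift).

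For $\nu\in(\tfrac12,1)$ the master recursion has exponent $2\nu>1$, and we invoke the classical sequence lemma. Set $g(s)=\tfrac{1}{2\nu-1}s^{1-2\nu}$, positive and decreasing on $(0,\infty)$. A dichotomy on whether $\mathcal{E}_{k+1}\ge\tfrac12\mathcal{E}_k$ — in which case the mean value theorem applied to $g$ gives $g(\mathcal{E}_{k+1})-g(\mathcal{E}_k)\ge\mathcal{E}_k^{-2\nu}(\mathcal{E}_k-\mathcal{E}_{k+1})\ge\bar c\,2^{-2\nu}$ — or $\mathcal{E}_{k+1}<\tfrac12\mathcal{E}_k$ — in which case monotonicity of $t\mapsto t^{1-2\nu}$ and $\mathcal{E}_k^{1-2\nu}\ge\mathcal{E}_{k_1}^{1-2\nu}$ give $g(\mathcal{E}_{k+1})-g(\mathcal{E}_k)\ge\tfrac{2^{2\nu-1}-1}{2\nu-1}\mathcal{E}_{k_1}^{1-2\nu}$ — shows $g(\mathcal{E}_{k+1})-g(\mathcal{E}_k)\ge\mu'>0$ for a constant $\mu'$ depending only on $\bar c,\nu,\mathcal{E}_{k_1}$. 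Telescoping from $k_1$ to $k$ and rearranging gives $\mathcal{E}_k\le(\mathcal{E}_{k_1}^{1-2\nu}+\mu(k-k_1))^{1/(1-2\nu)}$ with $\mu=(2\nu-1)\mu'$, i.e.\ the $\mathcal{O}(k^{-1/(2\nu-1)})$ rate. Since $1-\nu<\tfrac12$ one checks $\varphi(\mathcal{E}_k)=\mathcal{E}_k^{1-\nu}$ dominates $\sqrt{\mathcal{E}_{k-1}}$ for large $k$ (its decay exponent $\tfrac{1-\nu}{2\nu-1}$ is smaller than $\tfrac{1}{2(2\nu-1)}$), so Lemma~\ref{main_result2} gives $\|z_k-z^*\|\le C\,\mathcal{E}_k^{1-\nu}$; raising the displayed bound to the power $1-\nu$ yields claim~(3).

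The main obstacle is the regime $\nu\in(\tfrac12,1)$: obtaining a \emph{uniform} lower bound $g(\mathcal{E}_{k+1})-g(\mathcal{E}_k)\ge\mu'>0$ requires the dichotomy above rather than a naive mean value estimate (which degenerates as $\mathcal{E}_k\to0$), and one must additionally track which of the two terms in the maximum of Lemma~\ref{main_result2} dominates in each regime in order to transfer the $\mathcal{E}_k$-rate faithfully to $\|z_k-z^*\|$. Once these points are settled, the remaining steps (telescoping, raising to powers, absorbing constants) are routine.
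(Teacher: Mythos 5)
Your proof is correct and follows essentially the same route as the paper: the KL inequality combined with \eqref{rate} yields the scalar recursion $\bar{c}\,\mathcal{E}_k^{2\nu}\leq\mathcal{E}_{k-1}-\mathcal{E}_k$, which is then resolved in the three regimes exactly as in the paper (contradiction for $\nu=0$, geometric contraction for $\nu\in(0,\tfrac12]$, telescoping of $\mathcal{E}_k^{1-2\nu}$ for $\nu\in(\tfrac12,1)$) and transferred to $\|z_k-z^*\|$ via Lemma \ref{main_result2}. The only cosmetic difference is in the regime $\nu\in(\tfrac12,1)$, where you use a per-index dichotomy on whether $\mathcal{E}_{k+1}\geq\tfrac12\mathcal{E}_k$ while the paper uses a ratio test $h(\mathcal{E}_k)\lessgtr r_0 h(\mathcal{E}_{k-1})$; both deliver the same uniform lower bound on the increments of $\mathcal{E}_k^{1-2\nu}$, and your caveats about constants and index shifts match the paper's own level of precision.
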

 
 \begin{proof}
 Let $ \nu\in[0,1)$ and for all  $ s\in [0,\tau), \varphi(s)=s^{1-\nu}$ and  $\varphi'(s)=(1-\nu)s^{-\nu}$.  It follows that $\forall k\geq k_1$, we have:
        \begin{equation}\label{rate_point1}
           \|z_{k}-z^{*}\|\leq C\max\{\mathcal{E}_{k}^{1-\nu},\sqrt{\mathcal{E}_{k-1}}\}.
       \end{equation}
Furthermore,  \redd{\eqref{KL1}} yields:
       \[ \mathcal{E}_k^{\redd{\nu}}\leq \|\partial P(x_{k},\lambda_{k},x_{k-1},\gamma_k)\| \hspace{0.5cm} \forall k\geq k_1.\]
Moreover, from \eqref{rate} and Lemma \ref{bounded_grad}, there exists $p_k\in\partial P(x_{k},\lambda_{k},x_{k-1},\gamma_k)$ such that for any $k\geq 1$, we have:
 \[
        \|p_k\|^2\leq\frac{4(2c+\theta+D_S)^2}{\ubar{\gamma}}(\mathcal{E}_{k-1}-\mathcal{E}_{k}).
 \]
 Hence, 
    \[ \mathcal{E}_k^{2\nu}\leq \frac{4(2c+\theta+D_S)^2}{\ubar{\gamma}}(\mathcal{E}_{k-1}-\mathcal{E}_{k}) \hspace{0.5cm} \forall k> k_1.\]
Setting $\bar{c}=\frac{\ubar{\gamma}}{4(2c+\theta+D_S)^2}>0,$
 we get the recurrence  \[ \bar{c}\mathcal{E}_k^{2\nu}\leq\mathcal{E}_{k-1}-\mathcal{E}_{k} \hspace{0.5cm} \forall k> k_1. \]
 \begin{enumerate}
         \item Let $\nu=0$. If $\mathcal{E}_k>0$ for any $k> k_1$, we have $\bar{c}\leq \mathcal{E}_{k-1}-\mathcal{E}_{k}$. As $k$ goes to infinity, the right hand side approaches zero. Then, $0<\bar{c}\leq0$ which is a contradiction. Hence,  there exists $ k> k_1 $ such that $ \mathcal{E}_k=0.$ Then, $\mathcal{E}_k\to 0$ in a finite number of steps and  from \eqref{rate_point1}, $z_k\to z^{*}$ in a finite number of steps.
         \item Let $\nu\in(0,\frac{1}{2}]$. Then, $2\nu-1\leq0$.
         Let $k > k_1$. Since $\{\mathcal{E}_i\}_{i\geq k_1}$ is monotonically decreasing, then $\mathcal{E}_i\leq\mathcal{E}_{k_1}$ for any $i\in\{k_1+1, k_1+2,..., k\}$ and 
         \[\bar{c}\mathcal{E}_{k_1}^{2\nu-1}\mathcal{E}_k\leq\mathcal{E}_{k-1}-\mathcal{E}_{k} \hspace{0.5cm} \forall k> k_1.\]
         Rearranging this, we get  for all $k> k_1$:
       \begin{align*}
       \mathcal{E}_k\leq \frac{\mathcal{E}_{k-1}}{1+\bar{c}\mathcal{E}_{k_1}^{2\nu-1}}\leq\frac{\mathcal{E}_{k-2}}{(1+\bar{c}\mathcal{E}_{k_1}^{2\nu-1})^2}\leq \cdots  \leq\frac{\mathcal{E}_{k_1}}{(1+\bar{c}\mathcal{E}_{k_1}^{2\nu-1})^{k-k_1}}.
       \end{align*}
       Then, we have $\max\{\mathcal{E}_k^{1-\nu},\sqrt{\mathcal{E}_{k-1}}\}=\sqrt{\mathcal{E}_{k-1}}.$
        It then follows that:
        \[
         \|z_{k}-z^{*}\|\leq\frac{\sqrt{\mathcal{E}_{k_1}}}{\sqrt{(1+\bar{c}\mathcal{E}_{k_1}^{2\nu-1}})^{k-k_1}},
         \]
       \item Let $\nu\in(1/2,1)$, we have: 
       \begin{equation}\label{eqqq}
           \bar{c}\leq(\mathcal{E}_{k-1}-\mathcal{E}_k)\mathcal{E}_k^{-2\nu} \hspace{0.5cm} \forall k> k_1.
       \end{equation}
    Let $h:\mathbb{R}_{+}\to\mathbb{R}$ be defined as $h(s)=s^{-2\nu}$ for any $s\in\mathbb{R}_{+}$. It is clear that $h$ is monotonically decreasing and $\forall s\in\mathbb{R}_+, h'(s)=-2\nu s^{-(1+2\nu)}<0$. Since $\mathcal{E}_k\leq\mathcal{E}_{k-1}$ for all $k> k_1$, then $h(\mathcal{E}_{k-1})\leq h(\mathcal{E}_k)$ for all $k> k_1$. We consider two cases:
    \newline
\textbf{{Case 1}}: Let $r_0\in(1,+\infty)$ such that:
$ h(\mathcal{E}_k)\leq r_0h(\mathcal{E}_{k-1}), \; \forall k> k_1.$
Then, from  \eqref{eqqq} we get:
\begin{align*}
 &   \bar{c}\leq r_0(\mathcal{E}_{k-1}-\mathcal{E}_k)h(\mathcal{E}_{k-1})\leq r_0h(\mathcal{E}_{k-1})\int_{\mathcal{E}_k}^{\mathcal{E}_{k-1}}{1\,ds}\\
    &\leq r_0\int_{\mathcal{E}_k}^{\mathcal{E}_{k-1}}{h(s)\,ds}= r_0\int_{\mathcal{E}_k}^{\mathcal{E}_{k-1}}{s^{-2\nu}\,ds}=\frac{r_0}{1-2\nu}(\mathcal{E}_{k-1}^{1-2\nu}-\mathcal{E}_{k}^{1-2\nu}).
\end{align*}
Since $\nu>\frac{1}{2}$, it follows that:
\[
0<\frac{\bar{c}(2\nu-1)}{r_0}\leq \mathcal{E}_{k}^{1-2\nu}-\mathcal{E}_{k-1}^{1-2\nu}.
\]

Let us define $\hat{c}=\frac{\bar{c}(2\nu-1)}{r_0}$ and $\hat{\nu}=1-2\nu<0$. We get:
\begin{equation}\label{need1}
    0<\hat{c}\leq \mathcal{E}_{k}^{\hat{\nu}}-\mathcal{E}_{k-1}^{\hat{\nu}} \hspace{0.5cm} \forall k> k_1.
\end{equation}
    \newline
\textbf{{Case 2}}: Let $r_0\in(1,+\infty)$ such that:
$h(\mathcal{E}_k)> r_0h(\mathcal{E}_{k-1}), \;  k> k_1$. We then have $\mathcal{E}_k^{-2\nu}\geq r_0\mathcal{E}_{k-1}^{-2\nu}$. This leads to 
\[
q\mathcal{E}_{k-1}\geq\mathcal{E}_k,
\]
where $q={r_0}^{-\frac{1}{2\nu}}\in(0,1)$. Since $\hat{\nu}=1-2\nu<0$ we have $ q^{\hat{\nu}}\mathcal{E}_{k-1}^{\hat{\nu}}\leq\mathcal{E}_k^{\hat{\nu}}$ and then, it follows that: 
\[(q^{\hat{\nu}}-1)\mathcal{E}_{k-1}^{\hat{\nu}}\leq\mathcal{E}_{k-1}^{\hat{\nu}}-\mathcal{E}_{k}^{\hat{\nu}}.
\] 
Since $q^{\hat{\nu}}-1>0$ and $\mathcal{E}_k\to0^+$ as $k\to\infty$, there exists $\Tilde{c}$ such that $(q^{\hat{\nu}}-1)\mathcal{E}_{k-1}^{\hat{\nu}}\geq\tilde{c}$ for all $k> k_1$. Therefore, we obtain:
\begin{equation}\label{need2}
        0<\tilde{c}\leq \mathcal{E}_{k}^{\hat{\nu}}-\mathcal{E}_{k-1}^{\hat{\nu}} \hspace{0.5cm} \forall k> k_1.
\end{equation}
By choosing ${\mu}=\min\{\hat{c},\tilde{c}\}>0$, one can combine \eqref{need1} and \eqref{need2} to obtain
\[
        0<{\mu}\leq \mathcal{E}_{k}^{\hat{\nu}}-\mathcal{E}_{k-1}^{\hat{\nu}} \hspace{0.5cm} \forall k> k_1.
\]
Summing the above inequality from $k_1+1$ to some $k> k_1$ gives
\[
\mu(k-k_1) + \mathcal{E}_{k_1}^{\hat{\nu}}\leq  \mathcal{E}_{k}^{\hat{\nu}}.
\]
Hence, \[
\mathcal{E}_k\leq (\mu(k-k_1) + \mathcal{E}_{k_1}^{\hat{\nu}})^{\frac{1}{\hat{\nu}}}=(\mu(k-k_1) + \mathcal{E}_{k_1}^{1-2{\nu}})^{\frac{1}{1-2{\nu}}}.
\]
Since $\nu\in(\frac{1}{2},1)$, then $\max\{\mathcal{E}_{k-1}^{1-\nu},\sqrt{\mathcal{E}_{k-1}}\}=\mathcal{E}_{k-1}^{1-\nu}.$ Then, \eqref{rate_point1} becomes: 
   \[
         \|z_{k}-z^{*}\|\leq\left(\frac{1}{\mu(k-k_1)+\mathcal{E}_{k_1}^{1-2\nu}}\right)^{\frac{1-\nu}{2\nu-1}}, \hspace{0.2cm} \forall k>k_1.
         \]
   \end{enumerate} 
This concludes our proof.
\end{proof}

\medskip

\noindent  Note that our convergence analysis under KL is similar to that found in the literature, see  e.g., \cite{ BolSab:18, Yas:22, CohHal:21} (although the convergence analysis under the KL property was also addressed in \cite{CohHal:21}, explicit rates associated with this property were not provided there). In conclusion, in addition to its straightforward implementation and  simplicity of iteration steps, our algorithm iL-ADMM enjoys mathematical guarantees of  convergence, ensuring that it can reliably find optimal solutions to a wide range of nonconvex problems.


 \subsection{Selection of  penalty parameter $\rho$}

\noindent \redd{The previous convergence results  rely on the assumption that the penalty parameter $\rho$ exceeds a certain threshold, see \eqref{gamma_rho}. However, in practice, determining this threshold beforehand poses challenges as it depends on unknown parameters of the  problem's data and the algorithm's parameters. 
\begin{algorithm} 
\caption{iL-ADMM method with trial values of $\rho$}
\label{backtracking_alg}
\begin{algorithmic}[1]
\State \textbf{Initialization:} Choose $(x_{-1}^{*}, y_{-1}^{*}, \lambda_{-1}^{*}) \in \mathbb{R}^n \times \mathbb{R}^p \times \mathbb{R}^m$, $\zeta_1, \zeta_2 > 1$, $\epsilon > 0$, $\rho_0 > 0$, and $K_0 > 0$.
\State $t \gets 0$
\While{$\epsilon$-KKT conditions are not satisfied}
    \State Call  \textbf{Algorithm \ref{alg1}} with $\rho = \rho_t$ 
 and warm start $(x_0, y_0, \lambda_0) \gets (x_{t-1}^{*}, y_{t-1}^{*}, \lambda_{t-1}^{*})$ 
 \Statex{~~~~}  for $K_t$  iterations, yielding $(x_{K_t}, y_{K_t}, \lambda_{K_t})$.
 \State Update $(x_{t}^{*}, y_{t}^{*}, \lambda_{t}^{*}) \gets (x_{K_t}, y_{K_t}, \lambda_{K_t})$.
    \State Update  $K_{t+1} \gets \zeta_1 K_t$ and $\rho_{t+1} \gets \zeta_2 \rho_t$.
    \State $t \gets t+1$
\EndWhile
\end{algorithmic}
\end{algorithm}}
\redd{To overcome this challenge, we propose in this section an outer algorithm that repeatedly calls Algorithm \ref{alg1} for a fixed number of iterations, denoted as $K_t$, using a penalty parameter $\rho_t$. If Algorithm 1 does not yield an $\epsilon$-KKT point for the problem \eqref{eq1} within $K_t$ iterations, then both $K_t$ and $\rho_t$ are increased geometrically. Specifically, we set $K_{t+1} = \zeta_1 K_t$ and $\rho_{t+1} = \zeta_2 \rho_t$, where $\zeta_1, \zeta_2 > 1$. The resulting procedure can be summarized in Algorithm \ref{backtracking_alg}.}

\medskip 

\noindent \redd{This approach has been also used, e.g., in \cite{XieWri:21}. Following similar arguments as in \cite{XieWri:21}, we can prove that the above algorithm is well-defined and  it yields an $\epsilon$-KKT point of problem \eqref{eq1} in a finite number of calls of Algorithm \ref{alg1}.
}


\section{Numerical results}\label{sec5}
\noindent In this section, we compare  iL-ADMM algorithm  with the dynamic linearized alternating direction method of multipliers (DAM) from \cite{CohHal:21} and the solver IPOPT \cite{WacBie:06} for solving nonlinear model predictive control  and matrix factorization problems using real dynamical systems and  datasets, respectively. The implementation details are conducted using MATLAB  on a laptop equipped with an i7 CPU operating at 2.9 GHz and 16 GB of RAM.

\subsection{Nonlinear model predictive control}
In this section, we consider nonlinear model predictive control (NMPC) problems  for several nonlinear systems:  inverted pendulum on a cart (IPOC) system \cite{BreLua:19},  single machine infinite bus  (SMIB) system  \cite{ThaAld:14},  lane tracking  (LT)  system from MathWorks' MPC toolbox, four tanks (4T) system  \cite{RafHub:06}, and free-flying robot (FFR) system \cite{Sak:99}. For a continuous  nonlinear system  we employ Euler discretization with a sampling time $T$ to obtain a discrete-time model of the form:
\[
{z}(t+1):=\psi({z}(t),{u}(t)),
\]
where  ${u}\in\mathbb{R}^{i_d}$  denotes the inputs  and ${z}\in\mathbb{R}^{s_d}$ the states. For all systems, we consider input  constraints of the form:
\[
{u}_\text{min}\leq {u}(t)\leq {u}_\text{max}. 
\]

\noindent Our goal is to drive the system to a desired state  ${z}_{\text{e}}$  and input $u_{\text{e}}$. To achieve this, we apply a NMPC scheme. To formulate the NMPC problem  as a  nonconvex  optimization problem, we   adopt a single shooting approach, where the state variables are eliminated under the assumption of a piecewise constant input trajectory. Then, we use auxiliary variables to equate the states of the system at hand. The decision variables for NMPC are given by $x = ({u}(0),\cdots,{u}(N-1)) \in \mathbb{R}^{N i_d}$, where $N$ is the prediction horizon. If we introduce a sequence of functions $F_j: \mathbb{R}^{Ni_d} \rightarrow \mathbb{R}^{s_d}$ defined~as
\[
F_0(x) = {z}(0), \;\;
F_{j+1}(x) = \psi(F_j(x),{u}(j)), \quad j=0:N-1,
\]
then the resulting NMPC problem that needs to be solved at each sampling time is given by:
\begin{align}
\label{MPC_pb}
& \min_{\left(x,\{y^{j+1}\}_{j=0}^{N-1}\right)} \varphi(x,y):= \frac{1}{2}\sum_{j=0}^{N-1}\| y^{j+1} - {z}_{\text{e}} \|_Q^2 + \|{u}(j)-u_{\text{e}}\|_R^2  \nonumber \\
& \hspace{1cm}\textrm{s.t.:} \hspace{0.5cm}  F_{j+1}(x)-y^{j+1}=0,\;\;  {z}({0}) \;  \text{given}\\ 
&  \hspace{2.2cm}{u}_\text{min}\leq x \leq{u}_\text{max}, \;  j=0:N-1,  \nonumber
\end{align}
where the matrices $Q, R \succeq 0$ and we used the notation $\|z\|_Q^2 = z^T Q z$.  The nonconvex problem described in  \eqref{MPC_pb} can be reformulated as problem \eqref{eq1}, where $-G$ is the identity matrix of dimension $N s_d$. The smooth functions $f$ and $h$ are convex quadratic,  the nonsmooth function $g$ is the indicator function of the set describing the  input box constraints and $\mathcal{Y} = \mathbb{R}^{N s_d}$. \redd{At this point, it is worth mentioning that since $g$ is the indicator function of a box set, then Step 4 of iL-ADMM reduces to finding a solution of a strongly convex QP  with box constraints, which is solved with \texttt{quadprog} from Matlab, and its counterpart in DAM \cite{CohHal:21} basically reduce to computing a projection onto some box constraints. On the other hand, Step 5 in iL-ADMM and its counterpart in DAM reduces to a gradient step. } 

\medskip 

\noindent  For simulations, the parameters of the systems, of the constraints and of the stage costs (matrices $Q,R$) are taken as in the cited references for each system. For NMPC we used the setup from Table 1 (here $N_{\text{sim}}$  denotes the simulation horizon, i.e., the number of times we solve the NMPC optimization problems at different $z(0)$'s associated with each system). \redd{We initialize all the algorithms, in the first step of NMPC, randomly, while in the subsequent NMPC steps we use a warm start strategy, i.e., we use the solution of the current NMPC step as the initialization for the algorithms in the next NMPC step. In our simulations, we stop iL-ADMM and DAM  when  $\|\mathcal{F}_{k}\| \leq 10^{-6}$ and $|\varphi_{k} - \varphi_{k-1}| \leq 10^{-5}$, where $\mathcal{F}_k$ and $\varphi_k$ denote the functional constraints and the objective function evaluated at the current iterate $(x_k,y_k)$, respectively. } 
\begin{table}
  \renewcommand{\arraystretch}{1}
\begin{adjustbox}{width=0.7\columnwidth,center}
  \begin{tabular}{|c|c|c|c|c|}
    \hline
    \backslashbox{System}{Parameters} &   $N_{\text{sim}}$ &  $N$ & $T$ &$z(0)$ \\
    \hline
    SMIB  & $50$ & $10$ & $0.01$&$[0.05,0.1,0.1,0.2]^T $\\
    \hline
   IPOC & $40$& $10$ & $0.1$&$ [0,0,0.5,0]^T$\\
    \hline
    4T & $100$& $20$ & $3$&$[20,20,20,20]^T $\\
    \hline
    LT & $50$& $10$ & $0.1$&$ [0.1,0.5,25,0.1,0.1,0.001,0]^T$\\
    \hline
    FFR & $50$& $30$ & $0.4$&$[-10,-10,\frac{\pi}{2},0,0,0]^T $\\
    \hline
    \end{tabular}
       \end{adjustbox}
      \label{tab20}
      \caption{Systems and nonlinear MPC setup.}
\end{table}

\begin{table}  
\small
  \centering
    \renewcommand{\arraystretch}{1}
    \begin{adjustbox}{width=0.35\columnwidth,center}
  \begin{tabular}{|c|c|c|c|}
    \hline
    \backslashbox{Method}{Parameters} &   $\rho$ &  $\beta_k$ & $\theta_k$  \\
    \hline
      DAM from \cite{CohHal:21}  & $3$ & $10$ & $1$\\
    \hline
    iL-ADMM & $5$& $1$ & $1$\\
    \hline
    \end{tabular}
    \end{adjustbox}
      \caption{ Parameters for iL-ADMM and DAM used in NMPC.}
\end{table}

\medskip 

\noindent Moreover, after some search we found that  iL-ADMM and DAM  algorithms are performing well on all test systems with the parameter choices  from Table 2.   Note that the same parameters are used to solve all the  NMPC problems. For DAM, a larger value of $\beta_k$ is required to cover the big approximation error generated by the linearization of the full smooth part of the augmented Lagrangian function. On the other hand, we chose $\theta_k=1$ for both  methods. This table shows the robustness of our method iL-ADMM w.r.t. its parameters, as it requires minimum tuning. 

\medskip 

\noindent In Table 3, we report for each system the average number of iterations, $E(\# \text{iter})$, required for each algorithm to solve the nonlinear MPC problems over the simulation horizon $N_{\text{sim}}$ and the standard deviation, $\sigma(\# \text{iter})$\footnote{Standard deviation is computed as: $\sigma(\#\text{iter}) = \left(1/N_\text{sim} \sum_{i=1}^{N_\text{sim}}  (\#\text{iter}(i) - E(\# \text{iter}))^2 \right)^{0.5}$.}; similarly, the average CPU time (in seconds), $E(\text{cpu})$, and the corresponding standard deviation, $\sigma(\text{cpu})$;  the optimal value, $\varphi^*$, found by each algorithm for the first NMPC problem for each system, and the corresponding  infeasibility  $\|\mathcal{F}\|$. As can be seen from Table 3, in comparison to DAM, our algorithm iL-ADMM requires fewer iterations to solve the problem since the model considered in Step 4 approximates the original augmented Lagrangian better than the one considered in DAM, resulting in our algorithm being  faster than DAM in terms of cpu time. When compared with IPOPT, our method iL-ADMM appears superior in terms of CPU time for all systems except for one, the four-tank system, where IPOPT finds a solution in less CPU time, but still comparable to our method. However, for the number of iterations, IPOPT consistently requires fewer iterations than our method. We attribute this to the fact that IPOPT uses second derivative information, but, on the other hand, this necessitates more time to evaluate them. For all the systems, Table 3 clearly indicates that both, the proposed method iL-ADMM and  DAM algorithm achieve an optimal value for the first NMPC problem that is very close (generally coinciding) to that of the IPOPT solver. 

\begin{table}
\small
{ 
\centering 
  \renewcommand{\arraystretch}{1.7}
\begin{adjustbox}{width=\columnwidth,center}
    \begin{tabular}{|c|ccc|ccc|ccc|}
    \hline
   \multirow{3}{*}{\backslashbox{\!\!System $(i_d,s_d)$}{Algorithm}} 
     & \multicolumn{3}{c|}{iL-ADMM} &
     \multicolumn{3}{c|}{DAM \cite{CohHal:21}} &
      \multicolumn{3}{c|}{IPOPT} \\ 
              & E(\# iter)     & E(cpu) & $\varphi^*$
          &   E(\# iter)    & E(cpu) & $\varphi^*$ &
           E(\# iter)     &  E(cpu) & $\varphi^*$ \\ 
         &  $\sigma$(\# iter)   &$\sigma$(cpu)   & $\|\mathcal{F}\|$  
          &   $\sigma$(\# iter)   & $\sigma$(cpu)&   $\|\mathcal{F}\|$
           &  $\sigma$(\# iter)   &$\sigma$(cpu) & $\|\mathcal{F}\|$ \\
    \hline
    
    SMIB & 548.24 &  \textbf{0.98} & 0.3087
    & 3877.83 & 9.95  & 0.3089&
      87.28& 1.12&0.3087 \\
    (2,4) & 0.09&  1.63e-4&9.91e-7
      &0.22 & 0.01&9.99e-7 & 
      6e-2& 1e-3&2.87e-8\\\hline

          IPOC & 153.74 &  \textbf{0.36}& 166.60
    &576.36 & 17.41  & 166.60&
      28.7 &0.57 & 166.60\\
   (1,4) & 0.03&  1.23e-4&7.56e-7
      &0.08 & 5.57e-3&8.01e-7 & 
      2.3e-2&1.04e-3 &3.69e-8\\\hline

    4T & 1643.72 & 4.37 & 87.94
    &6464.35 &  15.03 & 87.94&
      187.64& \textbf{2.79}&87.94 \\
   (2,6) & 0.02&  2.1e-3& 2.45e-7
      &0.38 & 0.07&6.34e-8 & 
      2.66e-3& 4.36e-2&3.68e-8\\\hline

   LT & 169.46 &  \textbf{0.82}& 6.98
    &732.06 &  7.74 & 6.98&
      32.94& 1.03&6.98 \\
   (2,7) & 0.007&  1.75e-4&1.42e-7
      &0.14 & 0.008&9.36e-7 & 
      0.005& 1.24e-4&4.93e-8\\\hline

     FFR & 102.01 &  \textbf{1.33}& 1066.24
    & 1877.13 &  12.76 & 1066.42&
      23.94& 2.88&1065.87 \\
for each system  (2,6)  & 0.01&  2.84e-5& 9.74e-7
      &0.17 & 0.01&9.94e-7 & 
      0.02& 1.69e-4&1.87e-7\\
      \hline
    \end{tabular}%
    \end{adjustbox}
}
 \caption{ Numerical results comparing iL-ADMM, DAM and IPOPT on solving $N_{\text{sim}}$ nonlinear MPC problems for $5$  dynamical systems of different dimensions.}
\label{tab1}
\end{table}

\medskip 

\begin{figure}[htp]
   \begin{center}
       \includegraphics[width=\textwidth,height=8cm]{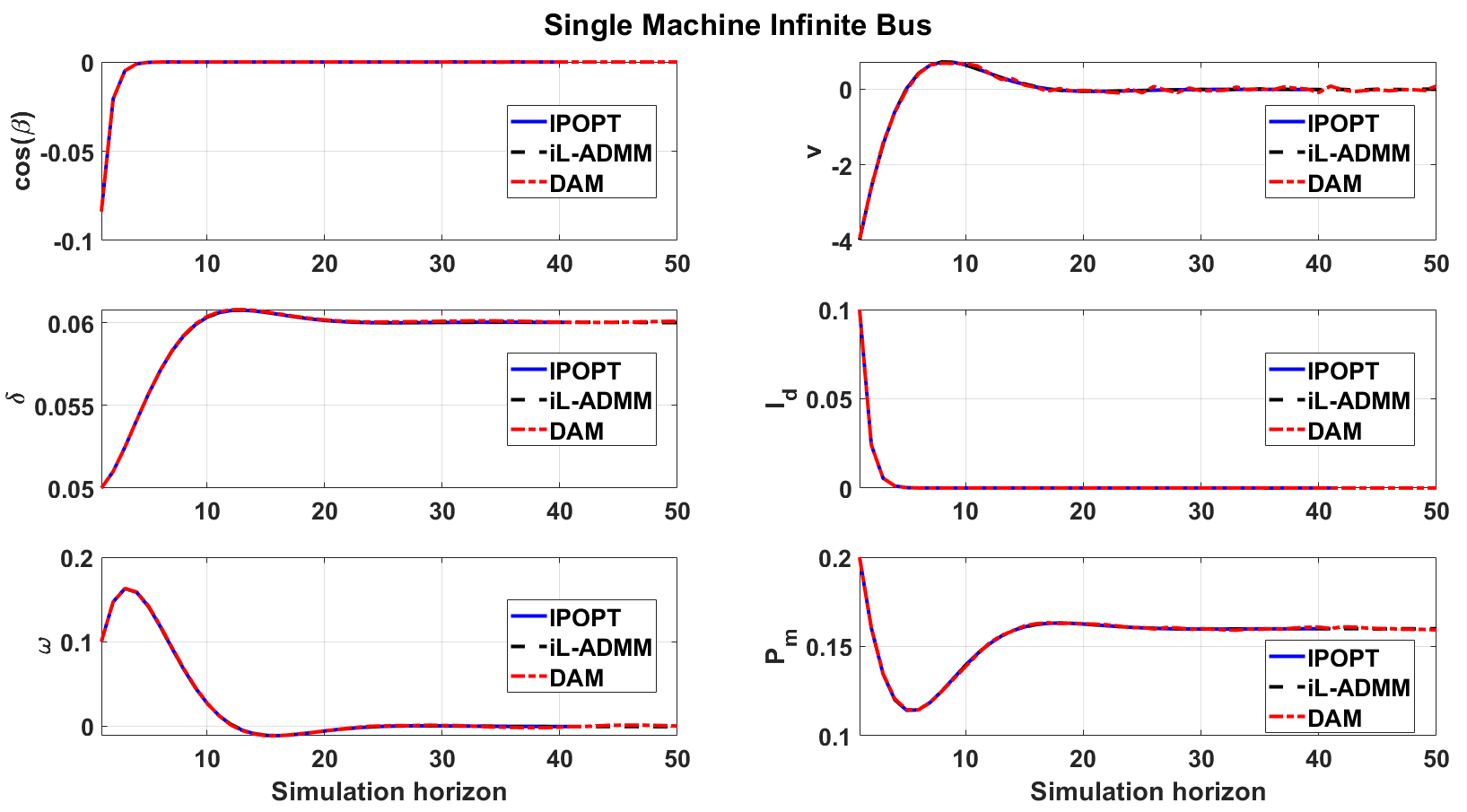}
    \caption{Closed-loop nonlinear MPC trajectories over a simulation horizon $N_\text{sim}=50$ for the single machine infinite bus system    computed using iL-ADMM, DAM and IPOPT (2 inputs, 4 states).}
    \label{fig2}   
   \end{center}
\end{figure}

\noindent Figure \ref{fig2} illustrates the closed-loop NMPC trajectories of inputs and states over a simulation horizon of lenght  $N_\text{sim}=50$ for the single machine infinite bus system obtained using iL-ADMM (Algorithm \ref{alg1}), DAM, and IPOPT. The figure distinctly demonstrates the success of all methods in stabilizing the system. Moreover, it reveals that these methods yield (nearly) identical solutions to the optimization problem \eqref{MPC_pb} for different $z(0)$'s over $N_\text{sim}$, as indicated by the similarities in the  closed-loop  NMPC trajectories among all three methods.


\subsection{Nonnegative orthogonal matrix factorization}
\redd{In this section, we consider factorizing a nonnegative matrix as a product of two nonnegative matrices and, additionally, one is required to be  orthogonal (note that orthogonal and nonnegative constraints lead to sparsity in the corresponding matrix).  This problem can be formulated as follows \cite{Gil:20}:  
\begin{equation} 
\label{matrix_fact}
\min_{U, V \geq 0} \quad \frac{1}{2} \| A - U V^\top \|_F^2 + \frac{\gamma}{2} \| V^\top V - I_r \|_F^2,
\end{equation}
where $A \in \mathbb{R}^{N \times d}_+$ is a given  hyperspectral image with $N$ denoting the number of pixels and $d$ the number of spectral bands,  \( U \in \mathbb{R}^{N \times r}_+ \) and \( V \in \mathbb{R}^{d \times r}_+ \) are  nonnegative matrices  and \( r \) is the latent dimension (rank of the factorization). To smoothly enforce orthogonality on \( V \), we introduce a regularization term controlled by a parameter \( \gamma >0 \).  Adding a slack variable $y$, problem \eqref{matrix_fact}  can be formulated as a particular case of  \eqref{eq1}, where $x= (U, V)$, $f(x) = \frac{\gamma}{2} \| V^\top V - I_r \|_F^2$,  $g$ is the indicator function of the positive orthant, $F(x) = UV^\top$,  $h(y) = \frac{1}{2} \| A - y \|_F^2$, $G = -I$ (hence, $m=p$) and $\mathcal{Y}$ is the full space. This reformulation allows us to leverage our proposed algorithm and DAM to solve the nonnegative orthogonal matrix factorization problem.  On the other hand, since  this reformulation yields many constraints and since IPOPT has difficulties when solving problems with many constraints, we use IPOPT to solve the original problem \eqref{matrix_fact}. 
}
\medskip 

\noindent \redd{In the experiments, we consider two values for the factorization rank, \( r=3 \) and \( r=10 \), and two values of the orthogonality parameter, \( \gamma = 10^{-2} \) and \( \gamma = 10^2 \).  We initialize all algorithms at the same randomly chosen starting point. The stopping criteria for iL-ADMM and DAM are  the following $\epsilon$-KKT conditions:  
\begin{align*}
    &\|F(x_k) - y_k\| \leq 10^{-3}\\
&\dist\left(-\nabla f(x_{k}) - {\nabla F(x_{k})}^T\lambda_{k}, \partial g(x_{k})\right) + \left\|\nabla h(y_{k}) - \lambda_{k}\right\|  \leq 10^{-2}.
\end{align*}  
The subproblem in Step 4 of iL-ADMM has strongly convex quadratic objective and nonnegative constraints and it is solved using the  accelerated gradient method \cite{Nes:18}, which is stopped when condition \eqref{inexactness} is satisfied with $\alpha = 10$, while Step 5 reduces to a gradient update. The parameters \( \beta_k \) and \( \theta_k \) are chosen to satisfy the conditions in \eqref{eq_assu} and \eqref{eq_assu1}, respectively.  
For the penalty parameter \( \rho \), we experiment with different values:  $\rho_{\text{iL-ADMM}}$  equal to the lower bound  in \eqref{gamma_rho}; $\rho_{\text{iL-ADMM}}/{2}$, which does not satisfy the condition  \eqref{gamma_rho}; and $2\rho_{\text{iL-ADMM}}$,  to assess the robustness of our algorithm with respect to \( \rho \). Similarly, for DAM, we generate \( \beta_k \) dynamically and set \( \theta_k = 2L_h = 2 \), see (4.5) and (4.27)  in \cite{CohHal:21}, respectively. For \( \rho \) we select the lower bound from condition  (4.27) in \cite{CohHal:21}, along with $\rho_{\text{DAM}}/{2}$, which does not satisfy the condition (4.27), and $2\rho_{\text{DAM}}$.}

\medskip 

\noindent \redd{For numerical simulations we consider a hyperspectral remote sensing scene, \texttt{Salinas} dataset, taken from   \cite{Uni:00}. We pre-process  \texttt{Salinas} dataset, reducing the spatial dimension using filtering, while preserving the number of spectral bands.  The results of the numerical  experiments are presented in Table 4, which shows  the number of iterations (\# iter), CPU time in sec. (cpu), the optimal objective value (\(\varphi^*\)), and the norm of the functional constraints (\(\|\mathcal{F}\|\)) for iL-ADMM and DAM (recall that IPOPT solves the problem without functional constraints). Additionally, we include the factorization error (\(\|UV^\top - A\|_F\)) and the orthogonality error (\(\|VV^\top - I_r\|_F\)). Note that \( (n, p) \) in Table 4 refers to the dimensions of problem \eqref{eq1}, where \( n \) is the dimension of \( x \) and \( p \) is the dimension of the slack variables \( y \). In this case, we have $m=p$, where \( m \) is the number of functional constraints.  From Table 4, we observe that our algorithm outperforms DAM and IPOPT in terms of computational time. In particular, IPOPT is consistently much slower than  iL-ADMM and DAM, which we attribute to large dimension of the problem and its reliance on second derivatives. 
Furthermore, the objective values obtained by iL-ADMM and DAM are consistently better than those produced by IPOPT. We attribute this to the fact that IPOPT solves a formulation with fewer decision variables compared to iL-ADMM and DAM. We also note that iL-ADMM and DAM exhibit robustness with respect to the penalty parameter \( \rho \). As \( \rho \) increases, feasibility improves, while CPU time worsens, likely due to the fact that large values of \( \rho \) can introduce ill-conditioning in the subproblems. Clearly,  higher values of \( \gamma \) lead to a smaller orthogonality error relative to the factorization error. Finally, regarding the impact of factorization rank \( r \), we find that increasing \( r \) improves factorization performance, as indicated by a lower factorization error, but worsens the computational time.  
}

\begin{table}
\small{
\centering 
  \renewcommand{\arraystretch}{1.7}
\begin{adjustbox}{width=\columnwidth,center}
    \begin{tabular}{|c|c|c|c|ccc|ccc|ccc|}
    \hline
  \multirow{3}{*}{
   $r$} & \multirow{3}{*}{$ (n,p)$} 
 & \multirow{3}{*}{
   $\gamma$} &  \multirow{2}{*}{
   $\rho_{\text{iL-ADMM}}$}
     & \multicolumn{3}{c|}{iL-ADMM} &
     \multicolumn{3}{c|}{DAM \cite{CohHal:21}} &
      \multicolumn{3}{c|}{IPOPT} \\ \cmidrule(lr){5-13} 
           & & &    & \# iter     & $\varphi^*$ & $\|UV^T - A\|_F$
          &   \# iter    & $\varphi^*$ & $\|UV^T - A\|_F$ &
           \# iter     &  $\varphi^*$ & $\|UV^T - A\|_F$ \\ 
     & 
     & &  \multirow{1}{*}{
   $\rho_{\text{DAM}}$} &  cpu   & $\|\mathcal{F}\|$    &  $\|V^TV - I_r\|_F$
          &   cpu  & $\|\mathcal{F}\|$&  $\|V^TV - I_r\|_F$ 
          &  cpu   & - & $\|V^TV - I_r\|_F$ \\
    \hline
    
   \multirow{12}{*}{$3$}
   & \multirow{12}{*}{$(699, 2016)$}
   & \multirow{6}{*}{$10^{-2}$}   
   &\multirow{1}{*}{$144$} & 571 & 1.87  & 1.84
         & 2626  & 1.88 & 1.77 
         &  &  &  \\
     &  & &\multirow{1}{*}{$20$}
     & \textbf{1.08} & 2e-5 & 6.00
      & 1.17 & 1.7e-4& 7.78 
      &  &  &  \\ \cmidrule(lr){4-10}

       &  
   & 
   &\multirow{1}{*}{$72$} & 478 & 1.88  & 1.82
         & 2112  & 1.88 & 1.77 
         & 1567 & 48.42 & 9.84 \\
    &  & &\multirow{1}{*}{$10$}
     & \textbf{0.83} & 6.7e-5 & 6.58
      & 1.15 & 4.2e-4& 7.8 
      & 30.74 & - & 1.73 \\ \cmidrule(lr){4-10}

             & 
   & 
   &\multirow{1}{*}{$288$} & 1032 & 1.87  & 1.84
         & 2954  & 1.88 & 1.77 
         &  &  &  \\
      &  & &\multirow{1}{*}{$40$}
     & 3.21 & 5.1e-6 & 6.02
      & \textbf{1.31} & 5.7e-5& 7.82 
      &  &  &  \\ \cmidrule(lr){3-13}

         &  
   & \multirow{6}{*}{$10^2$} 
   &\multirow{1}{*}{$144$} & 910 &  50.18 & 0.32
         & 16865  & 50.18 & 0.32 
         &  &  &   \\
    &  & &\multirow{1}{*}{$20$}
     & \textbf{2.61} & 2.5e-5 & 1.00
      & 7.27 & 8.2e-5 &  1.00 
      &  &  &  \\  \cmidrule(lr){4-10}

       & 
   &  
   &\multirow{1}{*}{$72$} & 660 & 50.18  & 0.38
         & 13420  & 50.18 &  0.40
         & 445 & 539.9 & 8.93 \\
      &  & &\multirow{1}{*}{$10$}
     & \textbf{1.94} & 8.7e-5 & 1.00
      & 5.90 & 2.4e-4 &  1.00
      & 46.02 & - & 3.16 \\ \cmidrule(lr){4-10}

             &  
   & 
   &\multirow{1}{*}{$288$} & 1813 & 50.18  & 0.32
         & 17247  & 50.19 & 0.43 
         &  &  &  \\
   &  & &\multirow{1}{*}{$40$}
     & \textbf{5.16} & 6.2e-6 & 1.00
      & 7.62 & 4e-5 & 1.00  
      &  &  &  \\ \cmidrule(lr){1-13}

                \multirow{12}{*}{$10$}
         & \multirow{12}{*}{$(2330, 2016)$}
   & \multirow{6}{*}{$10^{-2}$} 
   &\multirow{1}{*}{$144$} & 1725 & 0.89  & 1.29
         & 18125  & 0.89 & 1.27 
         &  &  &  \\
      &  & &\multirow{1}{*}{$20$}
     & \textbf{4.83} & 4.1e-5 & 3.28 
     & 9.19 & 4.4e-5  & 3.98
      &  &  &  \\ \cmidrule(lr){4-10}

       & 
   & 
   &\multirow{1}{*}{$72$} & 1370 & 0.88  & 1.28
         & 17733  & 0.88 & 1.27 
         & 2409 & 15.98 & 5.64 \\
      &  & &\multirow{1}{*}{$10$}
     & \textbf{3.94} & 1e-5 & 3.36
      & 8.80 & 2.8e-4& 3.98 
      & 251.64 & - & 3.16 \\ \cmidrule(lr){4-10}

             &  
   & 
   &\multirow{1}{*}{$288$} & 3469 & 0.88  & 1.28
         & 23003  & 0.89 & 1.27 
         &  &  &  \\
   &  & &\multirow{1}{*}{$40$}
     & \textbf{9.79} & 1e-5 & 3.29
      & 10.77 & 4.2e-5& 3.98 
      &  &  &  \\ \cmidrule(lr){3-13}

         &   
   & \multirow{6}{*}{$10^2$} 
   &\multirow{1}{*}{$144$} & 3354 &  11.83 & 4.75
         & 14131  & 11.84 & 4.64
         &  &  &  \\
    &  & &\multirow{1}{*}{$20$}
     & \textbf{5.44} & 9.2e-7 & 0.1 
     & 6.47 & 3.5e-4  & 0.14
      &  &  &  \\ \cmidrule(lr){4-10}

       & 
   & 
   &\multirow{1}{*}{$72$} & 2396 & 11.83  & 4.75
         & 11970  & 11.82 & 4.63 
         & 632 & 225.88 & 12.31 \\
     &  & &\multirow{1}{*}{$10$}
     & \textbf{4.61} & 2.2e-6 & 0.1
      & 4.78 & 7.6e-4 & 0.13  
      & 12.71 & - & 1.73 \\ \cmidrule(lr){4-10}

             & 
   & 
   &\multirow{1}{*}{$288$} & 6838 & 11.83  & 4.75
         & 14811  & 11.83 & 4.64 
         &  &  &  \\
    &  & &\multirow{1}{*}{$40$}
     & 13.28 & 2.2e-7 & 0.1
      & \textbf{5.92} & 1.1e-4 & 0.14  
      &  &  &  \\ \hline

    \end{tabular}%
    \end{adjustbox}
    }
 \caption{ Numerical results comparing iL-ADMM, DAM and IPOPT on solving nonnegative orthogonal matrix factorization problems for Salinas  dataset.}
\label{tab2}
\end{table}


\section{Conclusions} 
\label{sec6}
\noindent In this paper, we introduced an inexact linearized ADMM method for solving structured nonsmooth nonconvex optimization problems. By linearizing the smooth term of the objective function and functional constraints within the augmented Lagrangian, we derived simple updates. Moreover, we solved the subproblem corresponding to the first block of primal variables  inexactly. We established that the iterates of our method globally converge to a critical point of the original problem, and we derived convergence rates to an $\epsilon$-first-order  optimal solution, along with improved convergence rates under the KL condition. Furthermore, the numerical experiments have demonstrated the effectiveness of our proposed algorithm in solving nonlinear MPC \redd{and  matrix factorization} problems. Our work could be extended by exploring the distributed case, which could involve the development of a coordinate descent ADMM algorithm. 

\section*{Data availability}
\redd{The data that support the finding of this study are available from the corresponding author upon reasonable request.}

\section*{Conflict of interest}
The authors declare that they have no conflict of interest.

\section*{Funding}
The research leading to these results has received funding from: the European Union’s Horizon 2020 research and innovation programme under the Marie Skłodowska-Curie grant agreement No. 953348;  UEFISCDI, Romania, PN-III-P4-PCE-2021-0720, under project L2O-MOC, nr. 70/2022.



\end{document}